\documentclass[10pt, reqno, a4paper]{amsart}
\usepackage{amssymb,latexsym,amsmath,amsfonts,amsthm}
\usepackage{latexsym}
\usepackage[numbers,sort&compress]{natbib}
\usepackage[pagewise]{lineno}
\usepackage{color}
\usepackage[mathscr]{eucal}
\usepackage{comment}
\usepackage{dsfont}
\newcommand{\mc}[1]{\mathcal{#1}}
\addtocontents{toc}{\protect\setcounter{tocdepth}{1}}

\usepackage[linkcolor=red, citecolor=red]{hyperref}
\hypersetup{colorlinks=true, urlcolor=blue}
\usepackage[nameinlink]{cleveref}

\usepackage[top=2.7cm,bottom=2.7cm,left=2.7cm,right=2.7cm]{geometry}

\numberwithin{equation}{section}

\theoremstyle{definition}
\newtheorem{definition}{Definition}[section]

\theoremstyle{remark}
\newtheorem{remark}[definition]{Remark}
\theoremstyle{plain}
\newtheorem{theorem}[definition]{Theorem}
\newtheorem{lemma}[definition]{Lemma}
\newtheorem{proposition}[definition]{Proposition}

\newcommand{\norm}[1]{\left\|#1\right\|}

\newcommand{\wphi}{\widehat{\zeta}}

\newcommand{\vphi}{\varphi}

\newcommand \F{\mathcal S}

\newcommand\Y{\mathcal{Y}}
\newcommand\V{\mathcal{V}}

\newcommand\C{\mathcal{C}}

\newcommand{\iintQ}{\iint_{Q_T}}

\begin{document}
	\title[Controllability of the Cahn-Hilliard Burgers' equation]{Local controllability of the Cahn-Hilliard-Burgers' equation around certain steady states}
	\author [Manika Bag, Sheetal Dharmatti, Subrata Majumdar, Debanjana Mitra]{Manika Bag\textsuperscript{1}, Sheetal Dharmatti\textsuperscript{2}, Subrata Majumdar\textsuperscript{3}, Debanjana Mitra\textsuperscript{4 *} }


	\subjclass[2020]{35M33, 35K61, 93B05, 93B07, 93C20 }
\keywords{ Controllability, Carleman estimates,  source term method, fixed point argument}
\date{\today}
\thanks
{
\textsuperscript{1,2} School of Mathematics,
			Indian Institute of Science Education and Research, Thiruvananthapuram (IISER-TVM),
			Maruthamala PO, Vithura, Thiruvananthapuram, Kerala, 695551, INDIA.  
			\textit{e-mail:} \texttt{manikabag19@iisertvm.ac.in,}		\texttt{sheetal@iisertvm.ac.in} \\
\textsuperscript{3}Instituto de Matemáticas,Universidad Nacional Autónoma de México,
Circuito Exterior, Ciudad Universitaria,
04510 Coyoacán, Ciudad de México, México.
\textit{e-mail:} \texttt{subrata.majumdar@im.unam.mx}\\				
\textsuperscript{4}Department of Mathematics, Indian Institute of Technology Bombay, Powai, Mumbai,  400076, INDIA
\textit{e-mail:}	\texttt{deban@math.iitb.ac.in, debanjana.math@gmail.com \\} 
\textsuperscript{*}  Corresponding author\\
{\textbf{Acknowledgments:}} Manika Bag would like to thank the  Indian Institute of Science Education and Research, Thiruvananthapuram, for providing financial support and stimulating  research environment.  Sheetal Dharmatti would like to thank  Science $\&$ Engineering Research Board (SERB), India, Core Research grant, CRG/2021/008278 for financial support. Subrata Majumdar received financial support from the institute post-doctoral fellowship of IIT Bombay, India and the post-doctoral scholarship of the UNAM, México. 
}

\begin{abstract}
In this article we study the local controllability of the one-dimensional Cahn-Hilliard-Navier-Stokes equation, that is Cahn-Hilliard-Burgers' equation, around a certain steady state using a localized interior control acting only in the concentration equation. To do it, we first linearize the nonlinear equation around the steady state. The linearized system turns out to be a system coupled between second order and fourth order parabolic equations and the control acts in the fourth order parabolic equation. The null controllability of the linearized system is obtained by a duality argument proving an observability inequality. To prove the observability inequality, a new Carleman inequality for the coupled system is derived. Next, using the source term method, it is shown that the null controllability of the linearized system with non-homogeneous terms persists provided the non-homogeneous terms 
satisfy certain estimates in a suitable weighted space. Finally, using a Banach fixed point theorem in a suitable weighted space, the local controllability of the nonlinear system is obtained. 
\end{abstract}

\maketitle


\section{Introduction}
The widely recognized Cahn-Hilliard-Navier-Stokes (CHNS) equation, introduced by Hohenberg and Halperin [\cite{H}, see also \cite{g96}], serves as a prominent model for describing the evolution of an incompressible isothermal mixture comprising two immiscible fluids. This model, also known as a diffuse interface model, replaces the sharp interface between the fluids with a diffuse interface by introducing an order parameter and takes dimensionless form in higher dimension as:
\begin{equation}\label{gen}
\left\{
\begin{aligned}
  \partial_t\phi + \mathbf{u} \cdot \nabla \mathrm{\phi} &= \text{div}(m(\mathrm{\phi})\nabla\mu), \, \, \text{ in } \Omega \times (0,T), \\
        \mu &= -\Delta\mathrm{\phi} + F'(\mathrm{\phi}), \\
        \partial_t\mathbf{u} - \text{div}(\nu(\mathrm{\phi}) \mathrm{D}\mathbf{u}) + (\mathbf{u}\cdot \nabla)\mathbf{u} + \nabla \pi &= \mu \nabla \mathrm{\phi}, \, \, \text{ in } \Omega \times (0,T), \\
        \text{div}~\mathbf{u} & = 0, \, \, \text{ in } \Omega \times (0,T), \\
         \mathbf{u} & = 0, \,\, \text{ on } \partial\Omega\times(0,T), \\
 \frac{\partial\mathrm{\phi}}{\partial\mathbf{n}} = 0, \,  \frac{\partial\mu}{\partial\mathbf{n}} & = 0, \,\, \text{ on } \partial\Omega\times(0,T), \\
       \mathbf{u}(0) = \mathbf{u}_0 ,\,\, \mathrm{\varphi}(0) & = \mathrm{\varphi}_0, \,\, \text{ in } \Omega,  
\end{aligned}   
\right.
\end{equation}
where $\mathbf {u}(t, \mathrm{x})$ is the average velocity of the fluid and $\mathrm{\phi}(t, \mathrm{x})$ is the relative concentration of the fluid. Here, $\Omega$ is a bounded domain in $\mathbb{R}^d, d\geq 2 $ with a sufficiently smooth boundary $\partial \Omega$. The density is taken as matched density, i.e., constant density, which is equal to 1. Moreover, $m$ is mobility of binary mixture, $\mu$ is a chemical potential, $\pi$ is the pressure, $\nu$ is the viscosity and $F$ is a double well potential. The symmetric part of the gradient of the flow velocity vector is denoted by $\mathrm{D}\mathbf{u}$,  that is, $\mathrm{D} \mathbf{u} $ is the strain tensor $\frac{1}{2}\left(\nabla\mathbf{u}+(\nabla \mathbf{u})^{\top}\right)$.\\
The existing theoretical literature, encompassing aspects such as well-posedness, strong solutions, optimal control problem, attractors and long-term behavior without being exhausted can be summarized in \cite{unique19, GT22, bbd24, yfc16, robust, convergence,dynamics,gal10asym,arma}.  The controllability aspects of this coupled  system has not been considered yet in the literature. Due to significance  of this diffuse interface model in the fluid dynamics, we are interested to study its controllability properties.

To gain insights into the problem, we choose to explore a one dimensional version of the Cahn-Hilliard-Navier-Stokes model, which is the Cahn-Hilliard-Burgers' model. 
In particular, we consider the  Cahn-Hilliard-Burgers' model with constant mobility $m(\phi)=1$ and constant viscosity $\nu(\phi)=\gamma>0$ in 
$Q_T=(0, T)\times (0, 1)$, for any finite time $T>0$, with a control $h$ acting in the concentration equation:
\begin{equation}\label{CHB}
\left\{
\begin{aligned}
&u_t-\gamma u_{xx}+uu_x = -\phi_{xx}\phi_x + F'(\phi)\phi_x + f_s\quad  \text{in } Q_T, \\
&\phi_t +u\phi_x+\phi_{xxxx}= (F'(\phi))_{xx} + \chi_{\mathcal{O}}h \quad  \text{in } Q_T,\\
& u(t,0) = 0, \quad u(t,1) = 0, \quad \quad t\in (0, T), \\
& \phi_x(t,0) = 0, \quad \phi_x(t,1) = 0, \quad \phi_{xxx}(t,0) = 0, \quad \phi_{xxx}(t,1) = 0, \quad t\in (0, T),\\
& u(0,x) = u_0(x), \quad \phi(0,x) = \phi_0(x), \quad x\in (0,1),
\end{aligned}   
\right.
\end{equation}
where $\chi$ is the characteristic function of an open set $\mathcal{O}\subset (0, 1)$ and $f_s\in L^2(0, 1)$ is a given function, and $F$ denotes the regular double well potential which takes the particular form 
\begin{align}
   F(r) = (r^2-1)^2, \quad r\in \mathbb{R}. \label{reg potential}
\end{align}
The main objective of this article is to study the local controllability of \eqref{CHB}--\eqref{reg potential} around $(\overline{u}, \overline{\mathrm{\phi}})$,
a steady state solution to \eqref{CHB} without control.

Before stating our main results, we first discuss the existence of a certain steady state around which the controllability of \eqref{CHB}--\eqref{reg potential} will be studied. 

\noindent 
Note that $(\overline{u}, \overline{\mathrm{\phi}})$, a steady state solution to \eqref{CHB} with $h=0$, satisfies 
\begin{equation}\label{steady CHB}
\left\{
\begin{aligned}
&-\gamma \overline{u}''+\overline{u}\, \overline{u}' = -\overline{\phi}''\overline{\phi}' + \left(4\bar \phi^3-4 \bar \phi\right)\overline{\phi}' + f_s \quad \text{in } (0, 1) , \\
& \overline{u}\overline{\phi}'+\overline{\phi}''''=24 \overline{\phi}\,\left(\overline{\phi}'\right)^2 + (12{\bar{\phi}^2}-4)\overline{\phi}''  \quad \text{in }   (0, 1),\\
& \overline{u}(0) = 0, \quad \overline{u}(1) = 0, \\
&\overline{\phi}'(0) = 0, \quad \overline{\phi}'(1) = 0,\\
&\overline{\phi}'''(0) = 0, \quad \overline{\phi}'''(1) = 0,
\end{aligned}   
\right.
\end{equation}
for the given $f_s\in L^2(0,1)$ as in \eqref{CHB}.

We consider the case where $\overline{\phi}$ is a constant and then $\overline{u}$ satisfies
\begin{equation}\label{heat equ}
\left\{
\begin{aligned}
&-\gamma \overline{u}''+\overline{u} \, \overline{u}' = f_s,\\
& \overline{u}(0) = 0, \quad \overline{u}(1) = 0.
\end{aligned}   
\right.
\end{equation}
Under the condition $\|f_s\|_{L^2(0,1)}$ is small enough, using a fixed point argument, it can be shown that \eqref{heat equ} admits a solution $\overline{u}$ in $H^2(0,1)\cap H^1_0(0,1)$. Further, using continuous embedding result $H^2(0, 1)\hookrightarrow C^1([0, 1])$ for one-dimensional space, we get $\overline{u}\in C^1([0, 1])$.

\begin{remark}\label{rem-assump-steadystate}
In this article, we consider $(\overline{u}, \overline{\phi})$, where $\overline{\phi}$ is a constant but is not equal to $0, 1, -1$, and 
$\overline{u}\in C^1([0, 1])\cap H^2(0,1)\cap H^1_0(0,1)$ satisfies 
\eqref{heat equ} with a given $f_s\in L^2(0,1)$ with $\|f_s\|_{L^2(0,1)}$ is small enough. 
\end{remark}

Our goal is to find a control $h$ such that the solution of the nonlinear system \eqref{CHB} can be driven to the given steady state $(\overline{u}, \overline{\phi})$ at any given time $T>0$. In other words, for any given $T>0$, we want to find a control $h$ such that $(u-\overline{u}, \phi-\overline{\phi})$, where $(u, \phi)$ satisfies \eqref{CHB}--\eqref{reg potential}, reaches at $(0, 0)$ at time $T$. 

For that, we set $w=u-\overline{u}, \, \psi=\phi-\overline{\phi}$, where $(w, \psi)$  satisfies 
\begin{equation}\label{non lin}
		\begin{cases}
			w_t-\gamma w_{xx}+\bar u(x) w_x+\bar u'(x) w=\gamma_1 \psi_x+N_1(w,\psi), & \text{in } Q_T, \\
			\psi_t+ \psi_{xxxx}+\gamma_2 \psi_{xx} + \bar u(x) \psi_x = N_2(w,\psi)+\chi_{\mathcal{O}} h,  & \text{in } Q_T,\\
			w(t, 0)=0,  \  w(t, 1)=0,  & t \in (0, T), \\
			\psi_x(t,0)=0, \  \psi_x(t,1)=0, & t \in (0, T),    \\
			\psi_{xxx}(t, 0)=0,  \  \psi_{xxx}(t, 1)=0, & t \in (0, T),\\
			w(0, x)=w_0(x),\, \psi(0,x)=\psi_0(x), & x \in  (0, 1),
		\end{cases}
	\end{equation}
where $ ( w_0(x),\,\psi_0(x) ) =  ( u_0(x)-\overline{u}(x), \phi_0(x)-\overline{\phi}) $ and \begin{equation}\label{nonlinear}
		\begin{cases}
			N_1(w,\psi)=-ww_x-\psi_x \psi_{xx}+4\psi^3\psi_x+12\bar\phi\psi^2\psi_x+12\bar \phi^2\psi\psi_x-4\psi\psi_x,\\
			N_2(w,\psi)=-w\psi_x+12\psi^2\psi_{xx}+24\bar \phi\psi \psi_{xx}+24\psi \psi_x^2+24\bar \phi \psi_x^2,
		\end{cases}
	\end{equation}
where 
\begin{equation}\label{eqconstant}
 \gamma_1 = 4\bar \phi^3-4 \bar \phi, \quad \gamma_2= -(12{\bar{\phi}^2}-4).
 \end{equation}

Following the  classical strategy to establish the controllability of non-linear problem \eqref{non lin}--\eqref{nonlinear}, we first prove the controllability of its linear counterpart.
Hence let us introduce the linearized system of \eqref{CHB} around the steady state $(\overline{u}, \overline{\phi})$ by collecting the linear terms in \eqref{non lin} as
\begin{equation}
		\begin{cases}
			\label{CH}
			w_t-\gamma w_{xx}+\bar u(x) w_x+\bar u'(x) w=\gamma_1 \psi_x, & \text{in } Q_T, \\
			\psi_t+ \psi_{xxxx}+\gamma_2 \psi_{xx} + \bar u(x) \psi_x = \chi_{\mathcal{O}} h, &\text{in } Q_T,
				\end{cases} \end{equation}
with boundary conditions
\begin{equation}
	\begin{cases}\label{bd}
		w(t, 0)=0,  \  w(t, 1)=0,  & t \in (0, T), \\
		\psi_x(t,0)=0, \  \psi_x(t,1)=0, & t \in (0, T),    \\
		\psi_{xxx}(t, 0)=0,  \  \psi_{xxx}(t, 1)=0, & t \in (0, T),\\
	\end{cases}
\end{equation}
and with initial conditions 
\begin{equation}\label{in}
	w(0, x)=w_0(x), \psi(0,x)=\psi_0(x),\quad  x \in  (0, 1). 
\end{equation}

{
\begin{remark}
Note that due to the condition that $\overline{\phi}$ is not equal to $0, 1, -1$ as mentioned in \Cref{rem-assump-steadystate}, we have $\gamma_1\neq 0$ and so, the system $\eqref{CH}_1$ is coupled with $\eqref{CH}_2$. Thus, it is meaningful to explore the controllability of the above system using a control acting only in $\eqref{CH}_2$. 
\end{remark}
}

For any given $T>0$ and $\mathcal{O}$ an open subset in $(0, 1)$, we say that the linear system  \eqref{CH}--\eqref{in} is null controllable in $(L^2(0, 1))^2$ at time $T$ using a control localized in $\mathcal{O}$, if for every initial data $(w_0,\psi_0) \in (L^2(0, 1))^2$ there exists a control $h \in L^2(0, T; L^2(\mathcal{O}))$ such that 
$(w, \psi)$, the solution of \eqref{CH}--\eqref{in} satisfies 
$$\left(w(T, \cdot), \,  \psi(T, \cdot)\right)=(0,0), \quad \mathrm{in}\quad (L^2(0, 1))^2.$$

We define the space 
\begin{align}\label{eqspace}
    H^2_1:= \{\varphi\in \mathrm{H}^2(0, 1): \varphi'(0) = 0 = \varphi'(1)\}
\end{align}
which is a subspace of $H^2(0, 1)$ endowed with the norm
$$ \|\varphi\|_{H^2_1}:= \|\varphi\|_{L^2}+ \|\varphi''\|_{L^2}, \quad \forall\, \varphi\in H^2_1.$$
Moreover, from \cite[p. 271, Remark 8]{Brezis}, we recall that for any $\epsilon>0$, there exists a positive constant $K(\epsilon)$ depending on $\epsilon$ such that  
\begin{equation}\label{ineq1}
\|\varphi'\|_{L^2} \le \epsilon \|\varphi''\|_{L^2} + K(\epsilon)\|\varphi\|_{L^2}, \quad \forall\, \varphi\in H^2(\Omega).
\end{equation}

Our  main results in the article are stated below. First, we have following null controllability result for the linearized system \eqref{CH}--\eqref{in}: 
\begin{theorem}\label{Thm-linear}
Let $(\overline{u}, \overline{\phi})$ be as mentioned in \Cref{rem-assump-steadystate} and let $\mathcal{O}$ be any open subset of $(0, 1)$.
For any time $T>0$ and any initial data $(w_0,\psi_0)\in  (L^2(0,1))^2,$ there exists a control $h\in L^2(0, T; L^2(\mathcal{O}))$ such that the system \eqref{CH}--\eqref{in} admits a unique solution $(w, \psi)\in C([0,T]; L^2(0, 1))\cap L^2(0, T; H^1_0(0, 1))\times C([0, T]; L^2(0, 1))\cap L^2(0, T; H^2_1)$ satisfying 
$$\left(w(T,\cdot), \,  \psi(T,\cdot)\right)=(0,0) \quad \mathrm{in}\quad  (L^2(0,1))^2.$$ 
Moreover, the  control function $h$ has the following estimate:
\begin{align}\label{control_estimate}
	\|h\|_{L^2(0, T; L^2(\mathcal{O}))} \leq M e^{M(T +\frac{1}{T^m})} \|(w_0,\psi_0)\|_{(L^2(0,1))^2},
	\end{align}
where the constant $M>0$  is independent of $T$ and $(w_0,\psi_0)$ and $m>3$ be a natural number. 
\end{theorem}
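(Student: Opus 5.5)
We argue by duality. First we settle well-posedness. The operator $\mathcal{A}(w,\psi)=(\gamma w_{xx}-\bar u w_x-\bar u' w+\gamma_1\psi_x,\; -\psi_{xxxx}-\gamma_2\psi_{xx}-\bar u\psi_x)$ has domain given by \eqref{bd}. It is a bounded perturbation, relative to the principal part, of a block-diagonal sectorial operator. The coupling $\gamma_1\psi_x$ and the lower order terms are controlled by \eqref{ineq1}. Hence $\mathcal{A}$ generates an analytic semigroup on $(L^2(0,1))^2$. Energy estimates then give, for $h\in L^2(0,T;L^2(\mathcal O))$, a unique solution in the stated class. Concretely, we multiply the $w$-equation by $w$ and the $\psi$-equation by $\psi$, use $\|\psi\|_{H^2_1}$ and absorb the $\psi_x$ terms via \eqref{ineq1}.

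Next we write the adjoint system backward in time:
\begin{align*}
-\sigma_t-\gamma\sigma_{xx}-(\bar u\sigma)_x+\bar u'\sigma=0,\qquad -\eta_t+\eta_{xxxx}+\gamma_2\eta_{xx}-(\bar u\eta)_x=\gamma_1\sigma_x.
\end{align*}
The boundary conditions are $\sigma=0$ and $\eta_x=\eta_{xxx}=0$ at $x=0,1$, with terminal data $(\sigma_T,\eta_T)$. We must check that the boundary terms from integrating $\gamma_1\psi_x\sigma$ by parts vanish; they do because $\sigma=0$ on the boundary. By the standard HUM argument, null controllability with estimate \eqref{control_estimate} is equivalent to the observability inequality
$$\|(\sigma(0),\eta(0))\|_{(L^2)^2}^2\le M^2e^{2M(T+T^{-m})}\int_0^T\!\!\int_{\mathcal O}|\eta|^2.$$
The control is then obtained as the minimizer of the associated quadratic functional, or by a penalized HUM argument.

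The core step, and the main obstacle, is this observability inequality, since we observe only $\eta$ while $\sigma$ enters only through $\gamma_1\sigma_x$. We plan to combine two Carleman estimates with a common weight. The first is a Carleman estimate for the fourth order operator with Neumann-type conditions $\eta_x=\eta_{xxx}=0$. The second is a Carleman estimate for the heat operator in $\sigma$. We take $\alpha(t,x)=\frac{e^{2\lambda\|\beta\|_\infty}-e^{\lambda\beta(x)}}{(t(T-t))^{m}}$ with $\xi=\frac{e^{\lambda\beta}}{(t(T-t))^m}$ and $\beta$ a standard weight whose gradient vanishes only inside $\omega_0\Subset\mathcal O$. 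The exponent $m>3$ is what produces the factor $T^{-m}$.

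The fourth order estimate yields $s^7\xi^7|\eta|^2+\dots$ on the left, with the source $\gamma_1\sigma_x$ on the right. The heat estimate yields $s^3\xi^3|\sigma|^2+s\xi|\sigma_x|^2$ together with a local term in $\sigma$ on $\omega_0$. We choose the powers of $s\xi$ so that the right-hand side $\|e^{-s\alpha}\sigma_x\|^2$ is absorbed by the heat Carleman left side. This requires applying the heat Carleman with a higher weight power, obtained by differentiating or by multiplying by $(s\xi)^k$.

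It then remains to eliminate the local term $\iint_{\omega_0}(s\xi)^k e^{-2s\alpha}|\sigma|^2$. We plan to use the $\eta$-equation on a slightly larger set $\omega_1\Subset\mathcal O$: there $\gamma_1\sigma_x=-\eta_t+\eta_{xxxx}+\dots$. Since $\gamma_1\neq0$, we multiply by a cutoff times $\sigma$-type test functions and integrate by parts to move derivatives onto $\eta$. This is the delicate part. We actually observe $\sigma_x$, not $\sigma$, so the heat Carleman must be written with local term $|\sigma_x|^2$. Alternatively, we could set $\tilde\sigma=\sigma_x$ and use the vanishing of $\sigma$ on the boundary together with a Poincaré-type argument.

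After the integrations by parts we absorb the high-order local norms of $\eta$ with the global Carleman terms, using Young's inequality and large $s,\lambda$. This gives a Carleman inequality with only $\iint_{\mathcal O}(s\xi)^N e^{-2s\alpha}|\eta|^2$ on the right. Finally, we combine it with the dissipativity (energy) estimate of the adjoint system on $(0,T/2)$, which yields growth at most $e^{CT}$. Removing the weights on $(T/4,3T/4)$ then gives the observability inequality with constant $Me^{M(T+T^{-m})}$, and \eqref{control_estimate} follows.
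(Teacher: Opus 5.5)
Your outline is essentially the paper's proof: duality, a joint Carleman estimate with a common weight built from the fourth-order estimate and a heat Carleman estimate applied to $\tilde\sigma=\sigma_x$ (this supplies the $s\xi|\sigma_{xx}|^2$ term needed after integrating $\sigma_x\eta_{xxxx}$ by parts), removal of the local $\sigma_x$ term via the $\eta$-equation and $\gamma_1\neq 0$, a cascade of cutoffs down to a single local $|\eta|^2$ term, and the energy estimate on $(T/4,3T/4)$. The only real divergence is the cost: the paper keeps the Neumann trace $\sigma_{xx}(t,1)$ and bounds it by interpolation and parabolic regularity, which forces $s\geq \mu_0 e^{mCT}T^m$ and then needs a separate argument for $T\geq 1$ (a control on a fixed interval $(0,T_0)$, extended by zero). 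Your direct weight removal is instead justified once you note that $\sigma_{xx}=0$ at $x=0,1$ (because $\sigma_t=0$ and $\bar u=0$ there), so $\tilde\sigma$ solves a homogeneous Neumann problem, the threshold on $s$ stays polynomial in $T$, and you get the cost $e^{C(T+1/T)}$, which implies the stated bound via $1/T\leq 1+T^{-m}$ (the factor $T^{-m}$ does not come from the choice $m>3$).
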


Further using a fixed point argument, we obtain the following result for the non-linear system:
\begin{theorem}\label{Thm-nonlinear} 
Let $(\overline{u}, \overline{\phi})$ be as mentioned in \Cref{rem-assump-steadystate} and let $\mathcal{O}$ be any open subset of $(0, 1)$. For any $T>0$, there exists a positive number $\mu$ such that for any $(w_0,\psi_0)\in  (L^2(0,1))^2,$ with $\norm{(w_0,\psi_0)}_{L^2\times L^2}\leq \mu,$ 
there exists a control $h\in L^2(0, T; L^2(\mathcal{O}))$ such that the system \eqref{non lin} admits a solution $(w, \psi)\in C([0,T]; L^2(0, 1))\cap L^2(0, T; H^1_0(0, 1))\times C([0, T]; L^2(0, 1))\cap L^2(0, T; H^2_1)$ satisfying 
$$\left(w(T,\cdot), \,  \psi(T,\cdot)\right)=(0,0) \quad \mathrm{in}\quad  (L^2(0,1))^2.$$ 
\end{theorem}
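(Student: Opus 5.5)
We prove \Cref{Thm-nonlinear} by the source term method of Liu--Takahashi--Tucsnak combined with a Banach fixed point argument. We assume \Cref{Thm-linear} and the Carleman estimate behind it.

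\textbf{Step 1: the linear system with source terms.} Fix $T>0$. For the cost $M e^{M/T^m}$ of \eqref{control_estimate}, choose weights depending only on $t$: $\rho_0(t)=e^{-a/(T-t)^m}$ and $\rho_{\mathcal F}(t)=e^{-b/(T-t)^m}$, with $1<b/a<2$ after a suitable rescaling (for instance $\rho_0=e^{-M/((q-1)(T-t))^m}$ and $\rho_{\mathcal F}=e^{-(1+q)M/((q^2-1)(T-t))^m}$ with $q\in(1,\sqrt2)$), and with $\rho_{\mathcal F}\le C\rho_0$. We consider \eqref{CH}--\eqref{in} with source terms $f_1,f_2$ added to the two equations and with $\rho_{\mathcal F}^{-1}(f_1,f_2)\in L^2(Q_T)^2$.

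On the intervals $[T_k,T_{k+1}]$, where $T_k=T(1-q^{-k})$, we solve the problem with sources and zero initial data. We then add the control of \Cref{Thm-linear} that steers the accumulated state to zero on that interval. Summing over $k$ and using the cost estimate with $T_{k+1}-T_k\sim q^{-k}$ gives a control $h$ and a state $(w,\psi)$ such that
\[
\|\rho_0^{-1}(w,\psi)\|_{C([0,T];L^2)^2}+\|\rho_0^{-1}w\|_{L^2(H^1_0)}+\|\rho_0^{-1}\psi\|_{L^2(H^2_1)}+\|\rho_0^{-1}h\|_{L^2(L^2(\mathcal O))}
\le C\big(\|(w_0,\psi_0)\|_{L^2}+\|\rho_{\mathcal F}^{-1}(f_1,f_2)\|_{L^2}\big).
\]
Since $\rho_0\to0$ as $t\to T$, this forces $(w,\psi)(T)=0$.

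\textbf{Extra regularity.} The nonlinearity $N_2$ contains $\psi_x^2$ and $\psi\psi_{xx}$, and $N_1$ contains $\psi_x\psi_{xx}$, so $L^2(H^2)$ control of $\psi$ is too weak. We apply parabolic regularity to $\hat\rho\,\psi$ with the slightly weaker weight $\hat\rho=\rho_0^{-1}\rho_{\mathcal F}^{1/2}$ or similar. Then $\hat\rho'\psi$ is dominated by $\rho_0^{-1}\psi$, because $|\rho_{\mathcal F}'|\rho_{\mathcal F}^{-1/2}\lesssim \rho_0$ when $b<2a$. Fourth order regularity, using \eqref{ineq1} to absorb the lower order terms $\gamma_2\psi_{xx}$ and $\bar u\psi_x$, gives $\hat\rho\psi\in L^2(H^4)\cap C(H^2)$. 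Similarly, $\hat\rho w\in L^2(H^2)\cap C(H^1_0)$, since $\gamma_1\psi_x$ is now a regular source.

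\textbf{Step 2: fixed point.} Let $E$ be the Banach space of pairs $(w,\psi)$ with the weighted norms above, including the $\hat\rho$-weighted higher norms. Define $\Lambda(\hat w,\hat\psi)=(w,\psi)$ as the controlled solution from Step 1 with $f_i=N_i(\hat w,\hat\psi)$.

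The key check is that every term of $N_1$ and $N_2$ is at least quadratic, so $\rho_{\mathcal F}^{-1}N_i$ is bounded by products of weighted norms. Write, for instance,
\[
\rho_{\mathcal F}^{-1}\hat\psi_x\hat\psi_{xx}=(\rho_{\mathcal F}^{-1}\hat\rho^{-2})\,(\hat\rho\hat\psi_x)(\hat\rho\hat\psi_{xx}).
\]
The prefactor is $\rho_{\mathcal F}^{-1}\hat\rho^{-2}=\rho_0^{2}\rho_{\mathcal F}^{-2}$, and it must be bounded. Arranging this together with the earlier requirement $b<2a$ is exactly the delicate weight bookkeeping; we would adjust the exponents, for example using $\hat\rho=\rho_0^{-1}\rho_{\mathcal F}^{\theta}$ with a suitable $\theta$, or taking products of three factors with $\rho_0$ for the cubic terms.

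With admissible weights, one-dimensional embeddings ($H^1\hookrightarrow L^\infty$, $\hat\rho\psi\in L^\infty(H^2)\cap L^2(H^4)$, and $\hat\rho w\in L^\infty(H^1)$) give
\[
\|\rho_{\mathcal F}^{-1}N_i(\hat w,\hat\psi)\|_{L^2}\le C\big(\|(\hat w,\hat\psi)\|_E^2+\|(\hat w,\hat\psi)\|_E^4\big),
\]
together with an analogous Lipschitz estimate for differences. Hence $\Lambda$ maps the ball of radius $R$ in $E$ to itself and is a contraction there, once $R$ and $\mu$ are chosen small. The fixed point is the desired solution, and $h$ is the corresponding control.

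\textbf{Expected obstacle.} The main difficulty is the weight compatibility in Step 2. The derivative nonlinearities $\psi_x\psi_{xx}$ and $\psi_x^2$ require higher-order weighted regularity with a weight strong enough to absorb $\rho_{\mathcal F}^{-1}$, while the time derivative of that weight must stay controlled by $\rho_0^{-1}$. We would first try the standard choice with $1<b/a<2$ and $\hat\rho=\rho_0^{-1}\rho_{\mathcal F}^{1/2}$, and verify both requirements with explicit exponents.
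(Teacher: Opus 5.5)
\textbf{There is a genuine gap, and it sits at $t=0$, not in the weight bookkeeping you flag.}

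You measure the sources in $\rho_{\mathcal F}^{-1}L^2(0,T;L^2)$. This is what forces you to require $\hat\rho\psi\in C([0,T];H^2)\cap L^2(0,T;H^4)$ and $\hat\rho w\in C([0,T];H^1_0)\cap L^2(0,T;H^2)$. But $\hat\rho=\rho_0^{-1}\rho_{\mathcal F}^{\theta}$ is bounded below by a positive constant near $t=0$. Evaluating at $t=0$ would then require $\psi_0\in H^2$ and $w_0\in H^1_0$, whereas the theorem only assumes $(w_0,\psi_0)\in (L^2(0,1))^2$. So the controlled linear solution does not lie in your space $E$, and $\Lambda$ does not map $E$ into itself.

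Letting the weight vanish at $t=0$ does not rescue this. Near $t=0$ you would only have $\psi\in C(L^2)\cap L^2(H^2)$. That regularity bounds $\psi_x\psi_{xx}$, $\psi\psi_{xx}$ and $\psi_x^2$ in $L^1(0,\delta;L^2)$, not in the $L^2(0,\delta;L^2)$ your source norm demands.

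Two side remarks. First, the weight compatibility you worry about is actually solvable: any $0<\theta\le (2a-b)/(2b)$ works, whereas $\theta=1/2$ fails because $b>a$. Second, your explicit weights give $b/a=(1+q)^{1-m}<1$. So they satisfy neither $\rho_{\mathcal F}\le C\rho_0$ nor the source-term compatibility $\rho_{\mathcal F}(T_{k-1})e^{M/(T_{k+1}-T_k)^m}\le C\rho_0(T_{k+1})$. That compatibility forces $b\ge q^{2m}(a+M(q-1)^{-m})$. Keeping $b<2a$ therefore requires an extra large parameter $p$ in $a=pM(q-1)^{-m}$.

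\textbf{The missing idea is to measure the sources in $L^1$ in time,} i.e.\ $\rho_{\mathcal F}^{-1}f\in L^1(0,T;L^2)$. Your diagnosis that $L^2(H^2)$ control of $\psi$ is too weak is an artifact of the $L^2$-in-time source norm.
\begin{itemize}
\item The energy estimate for $L^1(L^2)$ sources (\Cref{prop-esti-regular}) still gives $C(L^2)\cap L^2(H^1_0)\times C(L^2)\cap L^2(H^2_1)$.
\item Your Step 1 then goes through verbatim with $L^1(T_k,T_{k+1};L^2)$ norms.
\item No extra regularity is needed. Indeed $\|\psi_x\psi_{xx}\|_{L^2}+\|\psi\psi_{xx}\|_{L^2}+\|\psi_x^2\|_{L^2}\le C\|\psi\|_{H^2}^2$ and $\|ww_x\|_{L^2}+\|w\psi_x\|_{L^2}\le C(\|w\|_{H^1}^2+\|\psi\|_{H^1}^2)$ are integrable in time.
\item The cubic and quartic terms are handled with $\|\psi\|_{L^4(H^1)}^2\le C\|\psi\|_{C(L^2)}\|\psi\|_{L^2(H^2)}$.
\item The only weight condition is $\rho_0^j\le\rho_{\mathcal F}$ for $j=2,3,4$, i.e.\ $b\le 2a$.
\end{itemize}
This is exactly how the paper closes the fixed point in $\Y$, via \Cref{Proposition-weighted} and the estimates on $N_1,N_2$.
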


The proof of the interior null controllability of the linearized system \eqref{CH}--\eqref{in} in \Cref{Thm-linear} relies on proving an observability inequality for the solutions of the adjoint system. To prove the observability inequality, leveraging global Carleman estimates for the Cahn-Hilliard equation and Burgers' equation, we first derive a unified Carleman estimate for our coupled system. This, in turn, facilitates the proof of null-controllability for the linearized system with an appropriate control cost of {$Me^{M(T+\frac{1}{T^m})}$}. This step is pivotal for deducing local controllability results for the nonlinear model. Subsequently, we employ the source term method, specifically proving the null-controllability of the linear model \eqref{CH}--\eqref{in} with suitably chosen source terms from a weighted Hilbert space where the weight function blows up as $t$ approaches $T^-$. 
Finally, employing the Banach fixed-point argument in a suitable weighted space, we establish local null-controllability for the nonlinear model and hence \Cref{Thm-nonlinear}.

Let us mention some related works in this direction from the available literature. The controllability of parabolic equations using Carleman inequality has been studied in \cite{MR2257228} in details. The fourth order parabolic equation, namely Kuramoto-Sivashinsky (KS) equation, has been extensively appears in many works such as  \cite{CE11}, \cite{Zhou}, \cite{Cer17}, \cite{Taka17}.  In particular, in \cite{Cer17}, the authors proved that the linear KS equation with Neumann boundary conditions is null-controllable by a  control acting in some open subset of the domain. In \cite{Taka17}, the author studied the boundary local null-controllability of the KS equation by utilizing the source term method (see \cite{Tucsnak-nonlinear}) followed by  the Banach fixed point argument where a suitable control cost $C e^{C/T}$ of the linearized model plays  the crucial role. Similar strategy has  been applied in \cite{hernandezsantamaria:hal-03090716} to study the boundary local null-controllability of a simplified stabilized KS system. 

To include dispersive and extra dissipative effects in the KS equation, a coupled system containing fourth (Kuramoto-Sivashinsky-Korteweg-de Vries, in short KS-KdV equation) and second order parabolic (heat) equations, under the name of stabilized Kuramoto-Sivashinsky system has appeared for instance in \cite{PhysRevE.64.046304}. The controllability of such system has been considered   in several works, namely  \cite{CE10}, \cite{CE12}, \cite{Cerpa-Mercado-Pazoto}, \cite{CE16}, \cite{kumar2022null}.   E. Cerpa, A. Mercado and A. F. Pazoto in \cite{Cerpa-Mercado-Pazoto} demonstrated the  local null-controllability of Stabilized KS equation by means of localized interior control acting in the KS-KdV equation. Then,  E. Cerpa and N. Carre\~{n}o \cite{CE16} proved the local null-controllability to the trajectories with a localized interior control exerted in the heat equation. Two different types of Carleman estimates for the linearized model followed by the inverse mapping theorem have been implemented to  conclude their controllability results. We also point out that recently, the controllability of KS-KdV-transport model and KS-KdV-elliptic equation have been discussed in \cite{majumdar:hal-03695906}, \cite{BM23} respectively.

The spirit of our work  relies on that of \cite{Guzman, Cerpa-Mercado-Pazoto}. The fourth order parabolic equation of our system is similar to that of the equation considered in \cite{Guzman} with the same boundary conditions. However, unlike our case, \cite{Guzman} deals with only the fourth order parabolic equation and not a coupled system between fourth order and second order parabolic equations. Rather, in \cite{Cerpa-Mercado-Pazoto}, a system coupled between a fourth order parabolic equation and a second order parabolic equation is considered. But the boundary condition for the fourth order equation is different than us. Thus, though the main strategy used to prove our result is classical and is similar to that of \cite{Guzman, Cerpa-Mercado-Pazoto}, due to the presence of the coupling and the different boundary conditions, our results do not follow directly from any of the earlier works mentioned above. In fact, to handle the coupling between equations and the boundary conditions, we need to derive a new Carleman inequality to prove the observability inequality. 

The novelty of our work is to deal with the coupling between second  and fourth order parabolic equations with the boundary conditions and nonlinear terms and to obtain the controllability results. Even though the main strategy of the proof of our results is standard, each step of the proof has to be investigated due to the difficulty posed by the coupling and the boundary conditions and the nonlinear terms. For instance, a new Carleman inequality for the coupled system has to be derived for this system with a careful study on the weight functions applicable for both the second order and the forth order parabolic equations with the boundary conditions. To our knowledge, this is the first attempt to study the controllability of the Cahn-Hilliard-Navier Stokes equation around a trajectory. Our results obtained here  give an insight how to approach to study the controllability of the Cahn-Hilliard-Navier Stokes equation in more general set-up and in higher dimensions (see \Cref{Sec: remarks}). 

{
The paper is organized as follows. In \Cref{sec:wellposed}, we first study the well-posedness of the linear coupled system and its associated adjoint problem. 
\Cref{Sec-carleman-1} is devoted to prove a Carleman inequality associated to the solution of the adjoint problem. This is the main tool to prove the null controllability result for the linear coupled system in \Cref{sec-car1}. First in \Cref{sec-obs}, an observability inequality for the solution of the adjoint problem is established and consequently in \Cref{sec-proof}, \Cref{Thm-linear} is proved. Next, in \Cref{Sec:control with source}, the null controllability of the 
linear coupled system with non-homogeneous terms from a suitable weighted space is proved. Finally, using a Banach fixed point argument, \Cref{Thm-nonlinear}, the local null controllability of \eqref{non lin} is proved in \Cref{Sec:Control nonlin}. At the end, in \Cref{Sec: remarks}, we mention a few open problems in this direction and possible extensions  of our results.} 


\section{Well-posedness of the linear system}\label{sec:wellposed}
 In this section we analyze the well-posedness of linearized system with given forcing terms $f_1, f_2$
 \begin{equation}\label{LinSystem}
 	\begin{cases}
 		w_t-\gamma w_{xx}+\bar u(x) w_x+\bar u'(x) w=\gamma_1 \psi_x+f_1, & \text{in } Q_T, \\
 		\psi_t+ \psi_{xxxx}+\gamma_2 \psi_{xx} + \bar u(x) \psi_x = f_2, &\text{in } Q_T\\
 		w(t, 0)=0,  \  w(t, 1)=0,  & t \in (0, T), \\
 		\psi_x(t,0)=0, \  \psi_x(t,1)=0, & t \in (0, T),    \\
 		\psi_{xxx}(t, 0)=0,  \  \psi_{xxx}(t, 1)=0, & t \in (0, T),\\
 		w(0, x)=w_0(x),\, \psi(0,x)=\psi_0(x), & x \in  (0, 1). 
 	\end{cases}
 \end{equation}
 
To study the above system, first, we consider the equation $\eqref{LinSystem}_2$ with the associated boundary conditions and initial data
 \begin{equation}\label{eqbiharmonic}
 \begin{array}{l}
 \psi_t+ \psi_{xxxx}+\gamma_2 \psi_{xx} + \bar u(x) \psi_x =f_2, \quad \text{in }\quad Q_T,\\
 \psi_x(t,0)=0, \  \psi_x(t,1)=0, \quad \psi_{xxx}(t, 0)=0,  \  \psi_{xxx}(t, 1)=0, \quad  t \in (0, T),\\
 \psi(0, \cdot)= \psi_0(\cdot), \quad \text{in} \quad (0,1).
 \end{array}
 \end{equation}
 The above equation can be written in the operator form: $\psi_t(t) = A\psi(t)+ f_2(t) \quad \forall\, t\in (0, T), \quad \psi(0)=\psi_0$, where 
   \begin{align}\label{A}
   D(A) = \{\psi\in\mathrm{H}^4(0,1):\psi'(0)=0=\psi'(1), \psi'''(0)=0=\psi'''(1)\},\\ \nonumber
   A\psi = -\psi''''-\overline{u}(x)\psi' -\gamma_2\psi'', \quad \psi\in D(A).
   \end{align}
   
 \begin{lemma}\label{lembiharmonic}
 The operator $(A, D(A))$ defined in \eqref{A} forms a strongly continuous semigroup on $L^2(0,1)$. 
 Further, for any $\psi_0\in L^2(0, 1)$ and any $f_2\in L^2(0, T; L^2(0, 1))$, \eqref{eqbiharmonic} admits a unique solution $\psi\in C([0, T]; L^2(0, 1))\cap L^2(0, T; H^2_1)$. 
 \end{lemma}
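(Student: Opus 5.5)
I will treat $A$ as a lower-order perturbation of the self-adjoint operator $A_0\psi=-\psi''''$ with domain $D(A)$. This is a Neumann-type bilaplacian: for $\psi,\varphi\in D(A)$, two integrations by parts, using $\psi'=\varphi'=0$ and $\psi'''=\varphi'''=0$ at $x=0,1$, give
\begin{equation*}
\langle A_0\psi,\varphi\rangle_{L^2}=-\int_0^1\psi''\varphi''\,dx .
\end{equation*}
Hence $A_0$ is symmetric and nonpositive.

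\textbf{Step 1: $A_0$ generates a contraction semigroup.} To show $A_0$ is self-adjoint, I use the bilinear form
\begin{equation*}
a(\psi,\varphi)=\int_0^1\psi''\varphi''\,dx+\int_0^1\psi\varphi\,dx
\end{equation*}
on $H^2_1$. It is continuous and coercive for the norm of $H^2_1$. Lax--Milgram then gives, for each $g\in L^2$, a unique $\psi\in H^2_1$ with $a(\psi,\varphi)=\langle g,\varphi\rangle$ for all $\varphi\in H^2_1$. Elliptic regularity in one dimension follows by testing with suitable $\varphi$: one gets $\psi''''=g-\psi\in L^2$, so $\psi\in H^4$. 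The natural boundary condition $\psi'''=0$ at $0,1$ is recovered by testing with $\varphi$ having arbitrary values at the endpoints. Thus $I-A_0$ is onto, and $A_0$ is self-adjoint and dissipative, so it generates a $C_0$ contraction (indeed analytic) semigroup by Lumer--Phillips.

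\textbf{Step 2: perturbation.} Set $B\psi=-\bar u\psi'-\gamma_2\psi''$, where $\bar u\in C^1([0,1])$. For $\psi\in D(A)$, combining interpolation with \eqref{ineq1} gives, for every $\epsilon>0$,
\begin{equation*}
\|B\psi\|_{L^2}\le \epsilon\|\psi''''\|_{L^2}+C_\epsilon\|\psi\|_{L^2}.
\end{equation*}
So $B$ is $A_0$-bounded with relative bound $0$. Since $A_0$ generates an analytic semigroup, the standard perturbation theorem (Pazy, Thm.~3.2.1) shows $A=A_0+B$ generates an analytic $C_0$ semigroup on $L^2(0,1)$. A more elementary route is also available: the energy identity below shows $A-\omega I$ is dissipative for large $\omega$. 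Surjectivity of $\lambda-A$ then follows from Lax--Milgram for the form $a(\psi,\varphi)+\lambda\langle\psi,\varphi\rangle+\langle \bar u\psi',\varphi\rangle-\gamma_2\langle\psi',\varphi'\rangle$, which is coercive on $H^2_1$ for $\lambda$ large by \eqref{ineq1}.

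\textbf{Step 3: mild solution and regularity.} Given $\psi_0\in L^2$ and $f_2\in L^2(0,T;L^2)$, the mild solution $\psi(t)=e^{tA}\psi_0+\int_0^t e^{(t-s)A}f_2(s)\,ds$ lies in $C([0,T];L^2)$ and is unique. For the $L^2(0,T;H^2_1)$ regularity I will derive an a priori estimate for smooth data ($\psi_0\in D(A)$, $f_2\in C^1([0,T];L^2)$, which gives classical solutions) and then pass to the limit by density. Multiplying the equation by $\psi$ and integrating by parts with the boundary conditions gives
\begin{equation*}
\frac12\frac{d}{dt}\|\psi\|^2+\|\psi_{xx}\|^2=\gamma_2\|\psi_x\|^2-\int\bar u\psi_x\psi+\int f_2\psi .
\end{equation*}
The boundary term from $\int\psi_{xx}\psi$ vanishes because $\psi_x=0$ at $0,1$. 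Using \eqref{ineq1} with small $\epsilon$, Young's inequality and Gronwall, I obtain
\begin{equation*}
\|\psi\|_{C([0,T];L^2)}^2+\|\psi_{xx}\|_{L^2(Q_T)}^2\le C\big(\|\psi_0\|^2+\|f_2\|^2_{L^2(Q_T)}\big).
\end{equation*}
By density, this estimate extends to the mild solution, giving $\psi\in L^2(0,T;H^2_1)$.

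\textbf{Main obstacle.} I expect the delicate step to be the regularity argument that the variational solution lies in $D(A)$, in particular recovering $\psi'''(0)=\psi'''(1)=0$ as natural boundary conditions. The remaining steps are routine.
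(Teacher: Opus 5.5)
Your proof is correct, but your main route differs from the paper's.

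The paper works with $A$ directly. It first proves quasi-dissipativity, $\langle A\psi,\psi\rangle\le\beta\|\psi\|^2_{L^2}$ with $\beta=\frac12\|\bar u'\|_{L^\infty}+\frac12\gamma_2^2$. It then gets surjectivity of $\lambda_0 I-A$ for $\lambda_0>\beta$ from Lax--Milgram on $H^2_1$ followed by an unexplained ``elliptic regularity'' step, and concludes with a Lumer--Phillips-type theorem. This is exactly your ``more elementary route''.

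Your primary route instead proves self-adjointness of the Neumann-type bilaplacian $A_0$. It then applies a perturbation theorem for analytic generators, using that $-\bar u\partial_x-\gamma_2\partial_x^2$ is $A_0$-bounded with relative bound $0$. This invokes a heavier theorem, but it gives more: $A$ generates an analytic semigroup, which the paper neither claims nor needs.

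Your regularity step is also more explicit than the paper's. You recover $\psi'''(0)=\psi'''(1)=0$ as natural boundary conditions by testing with functions in $H^2_1$ that have arbitrary endpoint values; the paper only asserts $\psi\in D(A)$.

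Finally, for $\psi\in L^2(0,T;H^2_1)$ the paper just cites a parabolic regularity result, \cite[Theorem 11.3]{RR}. You derive it from the energy identity for classical solutions (with $\psi_0\in D(A)$, $f_2\in C^1([0,T];L^2)$) plus a density and Cauchy-sequence argument. That is the same computation the paper does later in the proof of Theorem~\ref{thwellposed}, so your version is self-contained where the paper's relies on citations.
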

\begin{proof}
We show that $(A, D(A))$ is quasi-dissipative and maximal operator in $L^2(0, 1)$. 

\noindent
\textbf{Step 1.} Quasi-dissipative: Considering the Hilbert space $L^2(0,1)$ with real field, using the properties of $\overline{u}$ in Remark \ref{rem-assump-steadystate}, note that for all $\psi\in D(A)$, 
\begin{align*}
       \langle A\psi, \psi\rangle_{L^2}&=-\int_0^1|\psi''(x)|^2dx- \int_0^1\overline{u}(x)\psi(x) \psi'(x) dx- \gamma_2\int_0^1\psi''(x) \psi(x) dx\\
       \leq& -\int_0^1|\psi''(x)|^2 dx + \frac{1}{2}\int_0^1\overline{u}'(x)\psi^2(x) dx +\frac{1}{2}\int_0^1|\psi''(x)|^2 dx + \frac{\gamma_2^2}{2}\int_0^1|\psi(x)|^2 dx\\
       \leq & -\frac{1}{2}\int_0^1|\psi''(x)|^2 dx +\Big(\frac{\|\overline{u}'\|_{\mathrm{L}^\infty}}{2}+\frac{\gamma_2^2}{2}\Big)\int_0^1|\psi(x)|^2 dx
       \leq  \beta \|\psi\|^2_{L^2(0,1)},
   \end{align*}
for some $\beta=\Big(\frac{\|\overline{u}'\|_{\mathrm{L}^\infty}}{2}+\frac{\gamma_2^2}{2}\Big)>0$. Hence $(A, D(A))$ is quasi-dissipative in $L^2(0, 1)$.

\noindent 
\textbf{Step 2.} Maximality: We claim that for some $\lambda_0>\beta$ large enough, $(\lambda_0 I-A):D(A)\rightarrow L^2(0, 1)$ is onto. To prove it, we need to show that for any $f\in L^2(0, 1)$, there exists $\psi\in D(A)$ satisfying 
   \begin{align}\label{maximal}
       \lambda_0\psi+\psi''''+\overline{u}\psi'+\gamma_2\psi'' =f,\\
       \psi'(0)=0=\psi'(1), \quad \psi'''(0)=0=\psi'''(1).
   \end{align}
Let us define the bilinear form $a : H^2_1\times H^2_1 \rightarrow \mathbb{R}$ by
\begin{align}
    a(\psi, \varphi) =\lambda_0\int_0^1\psi\varphi+\int_0^1\psi''\varphi''+\int_0^1\overline{u}\psi'\varphi+\gamma_2\int_0^1\psi''\varphi, \quad \forall\, \psi, \varphi\in  H^2_1.
\end{align}
Clearly, $a(\cdot, \cdot)$ is continuous and further, for any $\psi\in H^2_1$, we get 
\begin{align*}
   a(\psi, \psi)  &\geq \frac{1}{2}\int_0^1|\psi''(x)|^2 dx +\Big(\lambda_0-\beta\Big)\int_0^1|\psi(x)|^2 dx \geq C\|\psi\|^2_{H^2_1},
\end{align*}
for some positive constant $C$, provided $\lambda_0>\beta$ and thus the bilinear form $a(\cdot, \cdot)$ is coercive. 
Using Lax-Milgram lemma, (for example, see \cite[Corollary 3,  Chapter VI]{DLv5}), for any $f\in L^2(\Omega)$, there exists a unique $\psi\in H^2_1$ so that 
\begin{align}\label{variation of A}
    a(\psi,\varphi) = (f, \varphi)_{L^2}, \quad \forall\, \varphi\in H^2_1. 
\end{align}
Now, using elliptic regularity result, it can be derived $\psi\in D(A)$ and thus, $(\lambda_0 I-A):D(A)\rightarrow L^2(0, 1)$ is onto. 

Hence, \cite[Theorem 12.22]{RR} gives that $(A, D(A))$ generates strongly continuous semigroup on $L^2(0, 1)$ and \eqref{eqbiharmonic} admits a unique solution $\psi\in C([0, T]; L^2(0, 1))$. From \cite[Theorem 11.3]{RR}, the regularity results for parabolic equation give that solution of \eqref{eqbiharmonic} belongs to $\psi\in C([0, T]; L^2(0, 1))\cap L^2(0, T; H^2_1)$. 
\end{proof}

We have the following well-posedness result for linearized system.   
\begin{theorem}\label{thwellposed}
Let $(w_0, \psi_0) \in (L^2(0,1))^2$ and $f_1, f_2 \in L^2(0, T; L^2(0, 1))$. Then the system \eqref{LinSystem} has a unique solution $(w, \psi)\in C([0,T];L^2(0,1))\cap L^2(0, T; H^1_0(0,1)) \times C([0, T]; L^2(0, 1))\cap L^2(0, T; H^2_1),$ where the space $H^2_1$ is defined in \eqref{eqspace}. 

\noindent 
Further, the solution of \eqref{CH}--\eqref{bd} satisfies
\begin{align}\label{energy}
\|w\|_{C([0,T]; L^2(0,1))\cap L^2(0,T; H^1_0(0,1))} \leq Ce^{CT}\left(\|w_0\|_{L^2(0, 1)} + \|\psi_x\|_{L^2(0, T; L^2(0, 1))}+\|f_1\|_{L^2(0, T; L^2(0, 1))}\right),  \\
 \|\psi\|_{C([0,T]; L^2(0,1))} + \|\psi\|_{L^2(0, T; H^2(0,1))} \leq Ce^{CT}\left( \|\psi_0\|_{L^2(0,1)}+ \|f_2\|_{L^2(0, T; L^2(0, 1))}\right), \nonumber
\end{align}
for some positive constant $C$. 
\end{theorem}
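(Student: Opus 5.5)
The system \eqref{LinSystem} is triangular: the equation for $\psi$ does not involve $w$. We therefore solve for $\psi$ first and then treat $\gamma_1\psi_x$ as a known source in the equation for $w$.

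\textbf{Step 1 (the $\psi$-equation).} By \Cref{lembiharmonic}, for $\psi_0\in L^2(0,1)$ and $f_2\in L^2(0,T;L^2(0,1))$ there is a unique $\psi\in C([0,T];L^2(0,1))\cap L^2(0,T;H^2_1)$. To get the quantitative bound, we multiply the equation by $\psi$ and integrate over $(0,1)$. The boundary conditions $\psi_x=\psi_{xxx}=0$ give $\int\psi_{xxxx}\psi=\int|\psi_{xx}|^2$. We treat the lower order terms as in Step 1 of the proof of \Cref{lembiharmonic}: integrate $\int \bar u\psi_x\psi$ by parts using $\bar u(0)=\bar u(1)=0$, and bound $\gamma_2\int\psi_{xx}\psi$ by Young's inequality. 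This yields
\[
\frac{1}{2}\frac{d}{dt}\|\psi\|_{L^2}^2+\frac12\|\psi_{xx}\|_{L^2}^2\le (\beta+\tfrac12)\|\psi\|_{L^2}^2+\tfrac12\|f_2\|_{L^2}^2 .
\]
Gronwall's lemma then gives the $C([0,T];L^2)$ bound and the $L^2(0,T)$ bound on $\|\psi_{xx}\|_{L^2}$, each with a factor $Ce^{CT}$. We recover the full $H^2$ norm from $\|\psi\|_{L^2}+\|\psi_{xx}\|_{L^2}$ via \eqref{ineq1}. This justifies the formal computation for strong solutions, and density extends it to the mild solution.

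\textbf{Step 2 (the $w$-equation).} By Step 1, $\psi_x\in L^2(0,T;L^2(0,1))$, so $g:=\gamma_1\psi_x+f_1\in L^2(0,T;L^2(0,1))$. We consider the operator $Bw=\gamma w''-\bar u w'-\bar u' w$ with $D(B)=H^2\cap H^1_0$. Since $\bar u\in C^1([0,1])$, this is a bounded first-order perturbation of the Dirichlet Laplacian. Either by the same Lax--Milgram/quasi-dissipativity argument or by standard parabolic theory (e.g. Galerkin), $w'=Bw+g$, $w(0)=w_0$, has a unique solution $w\in C([0,T];L^2)\cap L^2(0,T;H^1_0)$. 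For the energy estimate, we multiply by $w$ and note that
\[
\int(\bar u w_x+\bar u'w)w=\tfrac12\int\bar u' w^2 .
\]
Hence
\[
\frac12\frac{d}{dt}\|w\|^2+\gamma\|w_x\|^2\le C\|w\|^2+\tfrac12\|g\|^2 ,
\]
and Gronwall's lemma gives the first estimate in \eqref{energy}, with $\|g\|\le|\gamma_1|\|\psi_x\|+\|f_1\|$.

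\textbf{Uniqueness.} Both problems are linear, and the energy estimates applied to the difference of two solutions with zero data give zero.

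\textbf{Main obstacle.} The only delicate point is justifying the energy identities at the regularity level of the solution. The semigroup framework gives mild solutions, and integrating by parts in $\int\psi_{xxxx}\psi$ requires $\psi\in D(A)$. I would handle this by taking data $\psi_0\in D(A)$ and $f_2\in C^1([0,T];L^2)$, for which the solution is classical, deriving the estimate there, and passing to the limit by density. The regularity $L^2(0,T;H^2_1)$ is preserved in the limit because the estimate is uniform. The same density argument applies to $w$.
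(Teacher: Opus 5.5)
Your proposal is correct and follows essentially the same route as the paper. You solve the $\psi$-equation first via \Cref{lembiharmonic}, treat $\gamma_1\psi_x+f_1$ as a source in the heat equation, derive both estimates by testing with $\psi$ and $w$ plus Gronwall for smooth data, and conclude by density. Your explicit use of \eqref{ineq1} to recover the full $H^2$ norm, and your justification of the energy identities on strong solutions, simply spell out steps the paper leaves implicit.
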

\begin{proof}
\noindent
\textbf{Existence and uniqueness of solution.} We first solve $\eqref{CH}_2$ with the corresponding boundary conditions and initial data and then treating the right-hand side of the first equation of \eqref{CH} as forcing term, we solve the heat equation with Dirichlet boundary condition.\\
\noindent 
For any $\psi_0\in L^2(0, 1)$ and $h \in L^2(0, T; L^2(0, 1))$, from Lemma \ref{lembiharmonic}, it follows that there exists a unique solution 
$\psi\in C([0, T]; L^2(0, 1))\cap L^2(0, T; H^2_1)$ to the equation $\eqref{LinSystem}_2$ with the boundary conditions $\eqref{LinSystem}_4- \eqref{LinSystem}_5$. Now putting this $\psi$ in $\eqref{LinSystem}_1$, we get the heat equation 
\begin{equation}\label{heat part}
\left\{
\begin{aligned}
& w_t -\gamma w_{xx} + \overline{u}w_x + \overline{u}' w= \gamma_1\psi_x , \quad (t,x)\in (0,1)\times(0,T),\\
& w(t,0) = 0, \quad w(t,1) = 0, \quad t\in (0, T),\\
&w(0,x) = w_0(x), \quad x\in (0,1).
\end{aligned}   
\right.
\end{equation}
Since $\psi_x\in L^2(0, T; L^2(0,1))$, for any $w_0\in L^2(0, 1)$ and $f_1\in L^2(0, T; L^2(0,1))$, \eqref{heat part} admits a unique solution $w\in C([0,T];L^2(0,1))\cap L^2(0, T; H^1_0(0,1))$ (See, for example, \cite[Theorem 11.3, p. 382]{RR}). Thus, the first part of the theorem is proved.

\noindent 
\textbf{Energy estimates.} To track the dependency of the constant on time $T$, we give the outline of the estimates below. First we consider the equations with smooth initial data and smooth right hand side. Multiplying the equation $\eqref{CH}_2$ by $\psi$ and integrating by parts we get
\begin{align*}
    \frac{1}{2}\frac{d}{dt}\int_0^1|\psi(t,x)|^2\,dx + \int_0^1 |\psi_{xx}(t, x)|^2\, dx =& \frac{1}{2}\int_0^1\overline{u}'(x)|\psi(t,x)|^2\,dx -\gamma_2\int_0^1\psi_{xx}(t, x)\psi(t, x)\, dx \\ \nonumber&\qquad+ \int_0^1 f_2(t, x)\psi(t, x)\, dx,
\end{align*} 
and then applying Young's inequality, choosing any small $0<\epsilon<1$, 
\begin{align}
 \frac{d}{dt}\int_0^1|\psi(t,x)|^2\, dx+ 2(1-\epsilon)\int_0^1 | \psi_{xx}(t,x)|^2\, dx 
 &\leq \bigg(\|\overline{u}'\|_{\mathrm{L}^\infty}+2C_1(\epsilon)+1\bigg)\int_0^1|\psi(t, x)|^2\, dx\\ \nonumber&\qquad +\int_0^1 |f_2(t, x)|^2\, dx.\label{int_ineq}
\end{align}
Denoting $\beta' =\|\overline{u}'\|_{\mathrm{L}^\infty}+2C_1(\epsilon)+1$, and from above considering 
\begin{align*}
     \frac{d}{dt}\int_0^1|\psi(t, x)|^2\, dx \leq\beta'\int_0^1|\psi(t, x)|^2\, dx +\int_0^1 |f_2(t, x)|^2\, dx,
\end{align*}
the Gronwall's inequality gives
\begin{align*}
   & \int_0^1|\psi(t, x)|^2\, dx \leq e^{\beta't}\left(\|\psi(0)\|^2_{L^2(0,1)}+\int_0^t\int_0^1 |f_2(t, x)|^2\, dx\, dt\right), \quad \forall\, t\in [0, T],\\
  &  \sup_{t\in [0, T]}\|\psi(t, \cdot)\|^2_{L^2(0, 1)}\leq e^{\beta'T}\left(\|\psi(0)\|^2_{L^2(0,1)}+ \|f_2\|^2_{L^2(0, T; L^2(0, 1)}\right).
\end{align*}
Further, from \eqref{int_ineq}, we have 
$$
\begin{array}{l}
\|\psi_{xx}\|^2_{L^2(0, T; L^2(0, 1))}\le \beta'\|\psi\|^2_{L^2(0, T; L^2(0, 1))}+\|f_2\|^2_{L^2(0, T; L^2(0, 1))}\\
   \le \beta'T e^{\beta'T}\left(\|\psi(0)\|^2_{L^2(0,1)}+ \|f_2\|^2_{L^2(0, T; L^2(0, 1))}\right)+ \|f_2\|^2_{L^2(0, T; L^2(0, 1))}\\
   \le C e^{CT}\left(\|\psi(0)\|^2_{L^2(0,1)}+ \|f_2\|^2_{L^2(0, T; L^2(0, 1))}\right),
\end{array}
$$
for some $C>0$ independent of $T$ and thus, $\eqref{energy}_2$ is proved for smooth data. 

Now, multiplying the equation $\eqref{CH}_1$ by $w$ and integrating by parts we obtain
\begin{align}
&\frac{1}{2}\frac{d}{dt}\int_0^1 |w(t, x)|^2\, dx + \gamma\int_0^1 |w_x(t, x)|^2\, \\ \nonumber&= \gamma_1 \int_0^1 \psi_x(t, x) w(t, x)\, dx+ \int_0^1 f_1(t, x) w(t, x)\, dx -\frac{1}{2}\int_0^1\overline{u}'(x)|w(t, x)|^2\, dx\nonumber\\
& \leq \frac{1}{2}\left(\gamma_1\int_0^1|\psi_x(t, x)|^2\, dx+ \int_0^1|f_1(t, x)|^2\, dx\right) + C\int_0^1|w(t, x)|^2\, dx, \nonumber
\end{align}
for some $C>0$ independent of $T$, and following similar arguments as above, we derive the estimate $\eqref{energy}_1$ for smooth data. 
\noindent
Using density argument, we obtain the estimates for any initial data from $L^2(0,1))^2$ and right hand side from $L^2(0, T; L^2(0, 1))$.
\end{proof}

For later purpose, we state the following result for the case when the source terms $f_1, f_2 \in L^1(0,T; L^2(0,1))$. 
\begin{proposition}\label{prop-esti-regular}
For any given initial data $(w_0,\psi_0) \in (L^2(0,1))^2$, source terms $f_1,f_2 \in L^1(0,T; L^2(0,1))$,  there exists a unique { mild} solution $(w,\psi)$ of \eqref{LinSystem} such that
$w\in C([0,T]; L^2(0,1)) \cap L^2(0,T; H^1_0(0,1))$ and $\psi\in C([0,T]; L^2(0,1)) \cap L^2(0,T; H^2_1)$. 
In addition, there exists a positive constant $C$, independent in $T$ such that 
	\begin{multline*}
		\left\|{w} \right\|_{C([0,T]; L^2(0,1)) \cap L^2(0,T; H^1_0(0,1))} + 	\left\|\psi \right\|_{C([0,T]; L^2(0,1)) \cap L^2(0,T; H^2_1)} \\
		\leq  Ce^{CT}  \left(\|w_0\|_{L^2(0,1)} +\|\psi_0\|_{L^2(0,1)}+ \left\|(f_1,f_2) \right\|_{L^1(L^2\times L^2)}\right).
	\end{multline*}
\end{proposition}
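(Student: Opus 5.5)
We would follow the cascade structure used in the proof of \Cref{thwellposed}: first solve the fourth order equation $\eqref{LinSystem}_2$ for $\psi$, then insert $\gamma_1\psi_x+f_1$ as a source in the heat equation $\eqref{LinSystem}_1$. Since $f_1,f_2$ are now only in $L^1(0,T;L^2(0,1))$, solutions are defined in the mild (Duhamel) sense. By \Cref{lembiharmonic}, $A$ generates a $C_0$-semigroup $(e^{tA})$ on $L^2(0,1)$ with $\|e^{tA}\|\le e^{\beta t}$, because $A$ is quasi-dissipative with constant $\beta$. We therefore set
\begin{align*}
\psi(t)=e^{tA}\psi_0+\int_0^t e^{(t-s)A}f_2(s)\,ds .
\end{align*}
This immediately gives $\psi\in C([0,T];L^2(0,1))$ and
\begin{align*}
\|\psi\|_{C([0,T];L^2)}\le e^{\beta T}\left(\|\psi_0\|_{L^2}+\|f_2\|_{L^1(L^2)}\right).
\end{align*}
Uniqueness of mild solutions is standard.

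For the $L^2(0,T;H^2_1)$ bound we argue first with smooth data and then pass to the limit by density, exactly as in \Cref{thwellposed}. We again multiply by $\psi$, but we no longer use Young's inequality on $\int f_2\psi$. Instead we bound it by $\|f_2(t)\|_{L^2}\|\psi(t)\|_{L^2}$. Integrating the resulting differential inequality on $(0,T)$ gives
\begin{align*}
\sup_t\|\psi(t)\|^2_{L^2}+\|\psi_{xx}\|^2_{L^2(L^2)}\le \|\psi_0\|^2_{L^2}+\beta'\int_0^T\|\psi\|^2_{L^2}+2\sup_t\|\psi(t)\|_{L^2}\,\|f_2\|_{L^1(L^2)} .
\end{align*}
We absorb the last term via $2ab\le \tfrac12 a^2+2b^2$ and use Gronwall (or the mild bound above) for $\int_0^T\|\psi\|^2$. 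This yields
\begin{align*}
\|\psi\|_{C(L^2)\cap L^2(H^2_1)}\le Ce^{CT}\left(\|\psi_0\|_{L^2}+\|f_2\|_{L^1(L^2)}\right).
\end{align*}
The estimate \eqref{ineq1} then controls $\|\psi_x\|_{L^2(L^2)}$ by $\|\psi_{xx}\|_{L^2(L^2)}+\|\psi\|_{L^2(L^2)}$, so $\psi_x\in L^2(0,T;L^2(0,1))$ with the same type of bound.

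Next we solve the heat equation with source $\gamma_1\psi_x+f_1$, where $\gamma_1\psi_x\in L^2(L^2)\subset L^1(L^2)$ and $f_1\in L^1(L^2)$. The Dirichlet operator $B w=\gamma w''-\overline{u}w'-\overline{u}'w$ on $H^2\cap H^1_0$ generates a $C_0$-semigroup by the same quasi-dissipativity and Lax--Milgram argument. Its mild solution is continuous with values in $L^2$. The energy identity obtained by multiplying by $w$ gives
\begin{align*}
\tfrac12\tfrac{d}{dt}\|w\|^2+\gamma\|w_x\|^2\le (|\gamma_1|\|\psi_x\|+\|f_1\|)\|w\|+C\|w\|^2 .
\end{align*}
The same absorption trick as for $\psi$ bounds $\|w\|_{C(L^2)\cap L^2(H^1_0)}$ by $Ce^{CT}(\|w_0\|+\|\psi_x\|_{L^1(L^2)}+\|f_1\|_{L^1(L^2)})$. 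We then use $\|\psi_x\|_{L^1(L^2)}\le \sqrt T\|\psi_x\|_{L^2(L^2)}$ and absorb $\sqrt T$ into $e^{CT}$. Combining the two estimates proves the proposition.

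The main obstacle is the regularity claim $\psi\in L^2(H^2_1)$ and $w\in L^2(H^1_0)$ for merely $L^1$-in-time sources. Maximal $L^2$-regularity is not available for $L^1$ data, so we cannot simply invoke \cite[Theorem 11.3]{RR}. The point is that the energy method, with the pairing $\|f\|_{L^1(L^2)}\|\psi\|_{L^\infty(L^2)}$ in place of Young's inequality, still closes. One must also check that the mild solution is the limit of the smooth approximations in these norms. This follows from linearity: the same estimate applied to differences shows the smooth solutions are Cauchy in $C(L^2)\cap L^2(H^2_1)$ (respectively $C(L^2)\cap L^2(H^1_0)$), and the limit coincides with the Duhamel formula.
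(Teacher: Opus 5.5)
Your proof is correct and follows essentially the route the paper intends. The paper gives no separate proof of this proposition and implicitly relies on the cascade argument with energy estimates and density from Theorem~\ref{thwellposed}. Your one real adjustment is the right one: replacing Young's inequality on $\int f\psi$ by the pairing $\|f\|_{L^1(0,T;L^2)}\|\psi\|_{L^\infty(0,T;L^2)}$ and absorbing yields the $L^2(0,T;H^2_1)$ and $L^2(0,T;H^1_0)$ bounds for $L^1$-in-time sources, where the maximal-regularity citation used in Theorem~\ref{thwellposed} would not apply.
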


The adjoint system corresponding to the system \eqref{CH}--\eqref{bd}--\eqref{in} is given by: 
 \begin{equation}
 	\begin{cases}
 		\label{CH adj}
 		\sigma_t+\gamma \sigma_{xx}+\bar u(x) \sigma_x=0, & \text{ in } Q_T, \\
 		v_t-v_{xxxx}-\gamma_2 v_{xx} + \bar u(x) v_x+\bar u'(x) v =\gamma_1\sigma_x &\text{ in } Q_T,\\
 		\sigma(t, 0)=0,  \  \sigma(t, 1)=0,  & t \in (0, T), \\
 		v_x(t,0)=0, \  v_x(t,1)=0, & t \in (0, T),    \\
 		v_{xxx}(t, 0)=0,  \  v_{xxx}(t, 1)=0, & t \in (0, T),\\
 		\sigma(T, x)=\sigma_T(x), v(T,x)=v_T(x) & x \in  (0, 1).
 	\end{cases} 
 \end{equation}
Now we state the well-posedness result for the adjoint system \eqref{CH adj}. The proof of the theorem is similar to that of Theorem \ref{thwellposed}. 

\begin{theorem}
For each $(\sigma_T, v_T) \in (L^2(0, 1))^2,$ the system \eqref{CH adj} has a unique solution $(\sigma, v)\in C([0,T];L^2(0,1))\cap L^2(0, T; H^1_0(0,1)) \times C([0, T]; L^2(0, 1))\cap L^2(0, T; H^2_1).$ Moreover there exists a positive constant $C$, depending on $T$, such that
  \begin{align}\label{adj_est}
      \|(\sigma, v)\|_{C([0,T];L^2(0,1))\cap L^2(0, T; H^1_0(0,1)) \times C([0, T]; L^2(0, 1))\cap L^2(0, T; H^2_1)} \leq C\|(\sigma_T, v_T)\|_{(L^2(0, 1))^2}.
  \end{align}
\end{theorem}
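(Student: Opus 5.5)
The plan is to reverse time and reduce to the forward theory of Theorem \ref{thwellposed}, exploiting the triangular structure of \eqref{CH adj}: the $\sigma$-equation is decoupled, and $\sigma$ then enters the $v$-equation only through the source $\gamma_1\sigma_x$. Set $\tilde\sigma(t,x)=\sigma(T-t,x)$ and $\tilde v(t,x)=v(T-t,x)$. Then $(\tilde\sigma,\tilde v)$ solves the forward problem
\begin{align*}
\tilde\sigma_t-\gamma\tilde\sigma_{xx}-\bar u\,\tilde\sigma_x=0,\qquad \tilde v_t+\tilde v_{xxxx}+\gamma_2\tilde v_{xx}-\bar u\,\tilde v_x-\bar u'\,\tilde v=-\gamma_1\tilde\sigma_x ,
\end{align*}
with the same boundary conditions and initial data $(\sigma_T,v_T)$.

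\textbf{Step 1: the $\tilde\sigma$-equation.} This is a heat equation with Dirichlet boundary conditions and a $C^1$ drift coefficient. By the same parabolic theory invoked for \eqref{heat part} (\cite[Theorem 11.3]{RR}), it has a unique solution $\tilde\sigma\in C([0,T];L^2(0,1))\cap L^2(0,T;H^1_0(0,1))$. For the estimate, multiply by $\tilde\sigma$ and integrate by parts, using $-\int_0^1\bar u\tilde\sigma\tilde\sigma_x=\frac12\int_0^1\bar u'\tilde\sigma^2$. This gives
\[
\frac{d}{dt}\|\tilde\sigma\|^2_{L^2}+2\gamma\|\tilde\sigma_x\|^2_{L^2}\le\|\bar u'\|_{L^\infty}\|\tilde\sigma\|^2_{L^2},
\]
and Gronwall yields $\|\tilde\sigma\|_{C(L^2)\cap L^2(H^1_0)}\le Ce^{CT}\|\sigma_T\|_{L^2}$. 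In particular $\tilde\sigma_x\in L^2(0,T;L^2(0,1))$.

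\textbf{Step 2: the $\tilde v$-equation.} Define $\tilde A\varphi=-\varphi''''-\gamma_2\varphi''+\bar u\varphi'+\bar u'\varphi$ on $D(A)$. I would repeat the proof of Lemma \ref{lembiharmonic} for $\tilde A$:
\begin{itemize}
\item \emph{Quasi-dissipativity.} Here $\int_0^1\bar u\varphi'\varphi=-\frac12\int_0^1\bar u'\varphi^2$, since $\bar u(0)=\bar u(1)=0$, and $\int_0^1\bar u'\varphi^2$ is bounded by $\|\bar u'\|_{L^\infty}\|\varphi\|^2_{L^2}$. Hence $\langle\tilde A\varphi,\varphi\rangle\le-\frac12\|\varphi''\|^2_{L^2}+\tilde\beta\|\varphi\|^2_{L^2}$.
\item \emph{Maximality.} Use the analogous bilinear form on $H^2_1$, with the extra zero-order term $-\int_0^1\bar u'\psi\varphi$. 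It is coercive for $\lambda_0$ large, so Lax--Milgram applies, and elliptic regularity returns the solution to $D(A)$.
\end{itemize}
Thus $\tilde A$ generates a $C_0$-semigroup. Since the source $-\gamma_1\tilde\sigma_x$ lies in $L^2(0,T;L^2(0,1))$, the same argument as in Lemma \ref{lembiharmonic} gives a unique $\tilde v\in C([0,T];L^2)\cap L^2(0,T;H^2_1)$. The energy estimate, obtained exactly as for $\eqref{energy}_2$, is
\[
\|\tilde v\|_{C(L^2)\cap L^2(H^2)}\le Ce^{CT}\big(\|v_T\|_{L^2}+\|\tilde\sigma_x\|_{L^2(L^2)}\big).
\]

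\textbf{Step 3: conclusion.} Insert the Step 1 bound into the Step 2 bound, then undo the time reversal. This gives existence, uniqueness (from uniqueness of each of the two sub-problems) and \eqref{adj_est} with $C=C(T)$, of the form $Ce^{CT}$.

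The only point needing genuine care is the maximality and regularity step for $\tilde A$: the extra zero-order term $\bar u'\varphi$ has merely continuous coefficient, so I must check that elliptic regularity still gives $\varphi\in H^4$. This holds because $\varphi''''=f-\lambda_0\varphi-\gamma_2\varphi''+\bar u\varphi'+\bar u'\varphi$, and the right-hand side lies in $L^2$ once $\varphi\in H^2$. One must also confirm that the natural boundary conditions $\varphi'''(0)=\varphi'''(1)=0$ are recovered from the variational formulation, as in Lemma \ref{lembiharmonic}. Everything else is a routine transcription of Theorem \ref{thwellposed}.
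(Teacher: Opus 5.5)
Your proposal is correct and follows essentially the paper's route. The paper only says the proof is similar to that of Theorem~\ref{thwellposed}: use the triangular structure, solve the decoupled equation (here the $\sigma$ heat equation, after your time reversal), then solve the fourth-order equation with the coupling $\gamma_1\sigma_x\in L^2(0,T;L^2(0,1))$ as a source, with Gronwall energy estimates. Your check that Lemma~\ref{lembiharmonic} still holds for the adjoint operator, with the reversed drift and extra zero-order term $\bar u'\varphi$ (including recovery of $\varphi'''(0)=\varphi'''(1)=0$ from the variational formulation), is exactly the detail the paper leaves implicit.
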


\section{Carleman inequality}\label{Sec-carleman-1}
This section is devoted to the derivation of suitable Carleman estimate for the adjoint system \eqref{CH adj} to deduce the required observability inequality.
To do so, let us define the some useful weight functions.
\paragraph{\bf Weight function}
Consider a non-empty open set $\mathcal{O}_0\subset \subset  \mathcal{O}$.
There exists a function $\nu \in C^4([0,1])$ such that  
\begin{align}\label{definition_nu} 
	\begin{cases}
		\nu(x)>0 \qquad \forall x\in (0,1), \ \ \nu(0)=\nu(1)=0,\, \nu''(0)=\nu''(1)=0. \\
		|\nu^\prime(x) | \geq \widehat c >0 \ \  \forall x \in \overline{(0,1) \setminus \mathcal{O}_0} \ \ \text{ for some } \widehat c > 0.
	\end{cases}
\end{align} 
It is clear that $\nu^{\prime}(0)>0$ and $\nu^\prime(1)<0$. We refer \cite{Guzman} where the existence of such function has been addressed.  

Now, for any constant $k,m\in \mathbb{N}, \text{ with } k>m>3$ and parameter $\lambda>1$,  we define the weight functions 
\begin{align}\label{weight_function}
	\vphi_m(t,x)= \frac{e^{\frac{m+1}{m}\lambda k \|\nu\|_{\infty}   }-  e^{\lambda\big( k\|\nu\|_{\infty} + \nu(x) \big)}}{t^m(T-t)^m}	, \quad \xi_m(t,x) = \frac{e^{\lambda\big( k\|\nu\|_{\infty} + \nu(x) \big)}}{t^m(T-t)^m}, \quad \forall \, (t,x) \in Q_T.
\end{align}
We have defined the above  weight functions by adopting   the works  \cite{G07}. It is clear that both $\vphi_m$ and $\xi_m$ are positive functions in $Q_T$. 

In further calculations we will need estimates on the time and higher order space derivatives of the weight functions. The estimates are given below. 
\begin{itemize}
\item  Using the definition of the weight functions we observe:
\begin{align}
	\	\xi_m^{1/m} &\leq T^{2m-2} \xi_m, \quad (t, x)\in Q_T. \label{esti_time_deri}
	\end{align}
 
	\item  For any $l\in \mathbb N$ and $s>0$, there exists a positive constant $C$ independent of $\lambda, s, t, T$ such that
	\begin{align}\label{weight-deri-t}
		\big|\partial_t(e^{-2s\vphi_m}\xi_m^l) \big| \leq C\left(s T+ T^{2m+1}\right) \xi_m^{(1+\frac{1}{m})} e^{-2s\vphi_m}\xi_m^{l}, \quad (t, x)\in Q_T.
	\end{align}
Using  \eqref{esti_time_deri} in \eqref{weight-deri-t}, we get that for some positive constant $C$ independent of $\lambda, s, t, T,$ 
	\begin{align}\label{weight_estimate-t-m}
		|\partial_t(e^{-2s\vphi_m} \xi^l_m)| \leq C\left(sT^{2m-1}+ T^{2m}T^{2m-1}\right) \xi^2_m (e^{-2s\vphi_m} \xi^l_m), \quad (t, x)\in Q_T.
	\end{align}

	\item  For any $(l,n)\in \mathbb N\times \mathbb N$ and $s>0$, there exists a positive constant $C$ independent of $\lambda, s, t, T$ such that
\begin{align}\label{weight-deri-x}
	\big|{\partial_x^{(n)}}(e^{-2s\vphi_m}\xi_m^l) \big| \leq C s^n \lambda^n e^{-2s\vphi_m}\xi_m^{l+n}, \quad (t, x)\in Q_T.
\end{align}  

\end{itemize}

\paragraph{\bf Some useful notations} We also  declare the following notations which will simplify the expressions of our Carleman inequalities.

\begin{itemize} 
		\item[--] For any function $\sigma\in L^2(0,T;H^3(0,1)\cap H^1_0(0,1))\cap H^1(0,T; H^1(0,1))$ and positive parameters $s$, $\lambda$, we denote 
	\begin{align}\label{Notation_elliptic}
		\mc{I}_H(s,\lambda; \sigma): = s\lambda^2  \iintQ e^{-2s\vphi_m} \xi_m |\sigma_{xx}|^2 + s^3\lambda^4 \iintQ e^{-2s\vphi_m} \xi^3_m |\sigma_{x}|^2. 
	\end{align}

	\item[--]  For any $v\in  L^2(0,T; H^4(0,1))\cap H^1(0,T; L^2(0,1))$ and positive parameters $s$, $\lambda$,  we denote
	\begin{multline}\label{Notation_KS}
		\mc{I}_{CH}(s,\lambda;v):=s^7\lambda^8 \iintQ e^{-2s\vphi_m} \xi_m^7 |v|^2 + s^5\lambda^6 \iintQ e^{-2s\vphi_m} \xi_m^5 |v_{x}|^2 +  s^3\lambda^4 \iintQ e^{-2s\vphi_m} \xi_m^3 |v_{xx}|^2 \\
		+ s\lambda^2 \iintQ e^{-2s\vphi_m} \xi_m |v_{xxx}|^2   +	s^{-1} \iintQ e^{-2s\vphi_m}\xi_m^{-1} (|v_t|^2+ |v_{xxxx}|^2\big). 
	\end{multline}

\end{itemize}

\paragraph{\bf Some standard Carleman estimates} 

Let us first recall the Carleman inequality for heat equation \cite{G07}. Let us define  
\begin{equation}\label{rev_wght}
\vphi_m^*(t) = \max_{x \in [0,1]}  \vphi_m(t,x) \quad \text{and} \quad \xi_m^*(t) = \min_{x \in [0,1]} \xi_m(t,x).
\end{equation}
{\begin{lemma}[Lemma 6 in \cite{G07}]\label{rev_lm}
Let $\omega$ be any non-empty open subset of $(0, 1)$. Let $ u_0 \in L^2(0,1),$ $f \in L^2(0,T; L^2(0,1))$ and $f_1, f_2 \in L^2(0,T)$. Then there exists a constant $ C = C(\omega) > 0 $ such that the solution $u \in L^2(0,T;H^1(0,1)) \cap L^\infty(0,T;L^2(0,1)) $ of
\begin{equation*}
	\begin{cases}
		- u_t - u_{xx} = f & \text{ in }   (0,T) \times (0,1),\\
		u_x(t,0)=f_1(t), \quad u_x(t,1)=f_2(t) & \text{ in }   (0,T)\\
	 u|_{t=T} = u_0& \text{ in } (0,1)
	\end{cases}
	\end{equation*}
satisfies
\begin{align}\label{rev_est1}
\nonumber&\iintQ \bigg( s \lambda^2 e^{-2s \vphi_m} \xi_m |u_x|^2 
+ s^3 \lambda^4 e^{-2s \vphi_m} \xi_m^3 |u|^2 \bigg) 
\leq C \Bigg(
\iint_{(0,T)\times\omega} s^3 \lambda^4 e^{-2s \vphi_m} \xi_m^3 |u|^2 
\\&+ \iintQ e^{-2s \vphi_m} |f|^2 
+ \int_0^T s\lambda e^{-2s \vphi_m^*(t)} \xi_m^*(t) 
\left| f_2(t) \right|^2 
- \int_0^T s\lambda e^{-2s \vphi_m^*(t)} \xi_m^*(t) 
\left| f_1(t) \right|^2 
\Bigg)
\end{align}
for any $ \lambda \geq C $ and $ s \geq C(T^{2m} + T^{2m-1}) $.
\end{lemma}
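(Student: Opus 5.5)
The plan is to follow the classical Fursikov--Imanuvilov conjugation argument, adapted to the weights $\vphi_m,\xi_m$ with time singularity $t^{-m}(T-t)^{-m}$ and to Neumann boundary data. By density I assume $u$ is smooth, and I choose $\mathcal{O}_0\subset\subset\omega$ in the construction of $\nu$ in \eqref{definition_nu}.

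\textbf{Conjugation and splitting.} Set $z=e^{-s\vphi_m}u$. Then $z$ vanishes at $t=0$ and $t=T$, because $\vphi_m\to+\infty$ there; this follows from $e^{\frac{m+1}{m}\lambda k\|\nu\|_\infty}>e^{\lambda(k+1)\|\nu\|_\infty}$, which holds since $k>m$. Write $e^{-s\vphi_m}(-\partial_t-\partial_{xx})(e^{s\vphi_m}z)=e^{-s\vphi_m}f$ as $M_1z+M_2z=g$ with
\[
M_1z=-z_{xx}-s^2\lambda^2\xi_m^2|\nu'|^2z+s\vphi_{m,t}z,\qquad M_2z=-z_t+2s\lambda\xi_m\nu'z_x+2s\lambda^2\xi_m|\nu'|^2z,
\]
and $g=e^{-s\vphi_m}f+s\lambda^2\xi_m|\nu'|^2z-s\lambda\xi_m\nu''z$ (up to harmless regrouping). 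Squaring gives $\|M_1z\|^2+\|M_2z\|^2+2(M_1z,M_2z)=\|g\|^2$. The cross product is expanded term by term by integrating by parts in $x$ and $t$. The dominant interior terms are $s^3\lambda^4\iintQ\xi_m^3|\nu'|^4|z|^2$ and $s\lambda^2\iintQ\xi_m|\nu'|^2|z_x|^2$. The remaining contributions are lower order or involve $\vphi_{m,t}$, $\vphi_{m,tt}$ and $\xi_{m,t}$. By \eqref{weight-deri-t} and \eqref{esti_time_deri} these are bounded by $C(sT+T^{2m+1})\xi_m^{1+1/m}$-type factors, which are absorbed provided $\lambda\ge C$ and $s\ge C(T^{2m}+T^{2m-1})$. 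On $\overline{(0,1)\setminus\mathcal{O}_0}$ the lower bound $|\nu'|\ge\widehat c$ turns the dominant terms into the left-hand side of \eqref{rev_est1}, with an added local term in $z$ and $z_x$ over $(0,T)\times\mathcal{O}_0$.

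\textbf{Boundary terms.} Unlike the Dirichlet case, $z\neq0$ on $x=0,1$, and $z_x=e^{-s\vphi_m}(f_i-s\vphi_{m,x}u)$ there. The boundary contributions have the form $-[\,s\lambda\xi_m\nu'|z_x|^2+s^3\lambda^3\xi_m^3|\nu'|^3|z|^2+\dots]_{x=0}^{x=1}$, together with mixed terms $s\vphi_{m,t}zz_x$ and $z_tz_x$. Since $\nu(0)=\nu(1)=0$ and $\nu\ge0$, on the boundary we have $\vphi_m=\vphi_m^*$ and $\xi_m=\xi_m^*$. Since $\nu'(1)<0<\nu'(0)$, the $|z|^2$ boundary terms have a favourable sign: at $x=1$ the term $-\nu'|\cdot|^2$ is positive, and at $x=0$ the sign also works out after the subtraction. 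I substitute the Neumann data, absorb the cross terms using Young's inequality and the large $s^3$ term, and bound the remaining $|z_x|^2$ contributions by $s\lambda e^{-2s\vphi_m^*}\xi_m^*|f_i|^2$. This produces exactly the two boundary integrals in \eqref{rev_est1}, one with each sign. The $z_tz_x$ term is converted through $\partial_t$ of boundary quantities, which vanish at $t=0,T$.

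\textbf{Return to $u$ and removal of the local gradient.} I go back to $u=e^{s\vphi_m}z$ using $u_x=e^{s\vphi_m}(z_x+s\lambda\xi_m\nu'z)$. Then I remove the local term $s\lambda^2\iint_{(0,T)\times\mathcal{O}_0}e^{-2s\vphi_m}\xi_m|u_x|^2$ in the standard way. I take a cutoff $\theta\in C_c^\infty(\omega)$ with $\theta=1$ on $\mathcal{O}_0$, multiply the equation by $\theta s\lambda^2e^{-2s\vphi_m}\xi_mu$, and integrate by parts. The weight derivatives are controlled by \eqref{weight-deri-x} and \eqref{weight_estimate-t-m}, and after Young's inequality this bounds the local gradient by $\varepsilon I+C\bigl(s^3\lambda^4\iint_{(0,T)\times\omega}e^{-2s\vphi_m}\xi_m^3|u|^2+\iintQ e^{-2s\vphi_m}|f|^2\bigr)$.

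\textbf{Main obstacle.} I expect the main obstacle to be the careful bookkeeping of the boundary terms, namely getting the precise signs and matching the weights $\vphi_m^*,\xi_m^*$. Tracking the $T$-dependence of the time-derivative terms is routine by comparison, and it is this tracking that fixes the threshold $s\ge C(T^{2m}+T^{2m-1})$.
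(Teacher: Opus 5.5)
The paper gives no proof of this lemma (it is quoted from \cite{G07}), so your sketch has to stand on its own, and it fails at the boundary step.

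\textbf{The mixed signs cannot come out of your computation.} The boundary terms of your identity are $-\big[s\lambda\nu'\xi_m|z_x|^2+s^3\lambda^3(\nu')^3\xi_m^3|z|^2+\dots\big]_{x=0}^{x=1}$, with $(\nu')^3$ and not $|\nu'|^3$. Since $\nu'(0)>0>\nu'(1)$, i.e.\ $\partial_n\nu<0$ at both ends, the computation is completely symmetric in $x=0$ and $x=1$. The displayed terms are nonnegative at both endpoints. Whatever you bound above by $s\lambda e^{-2s\vphi_m^*}\xi_m^*|f_i|^2$ enters with a plus sign for $i=1$ and $i=2$ alike. Nothing in your argument produces $-\int_0^T s\lambda e^{-2s\vphi_m^*}\xi_m^*|f_1|^2$; the sentence ``exactly the two boundary integrals, one with each sign'' just asserts the target.

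In fact no argument can produce it, because \eqref{rev_est1} with that minus sign is false, for every $\omega$. Fix admissible $s,\lambda$ and a real $b\in C_c^\infty(0,T)$ with $b\not\equiv0$. Take a cutoff $\chi$ with $\chi=1$ on $[0,\delta]$ and $\chi=0$ on $[2\delta,1]$. For $\kappa>0$ set $\beta=\sqrt{\kappa/2}\,(1-i)$, so that $\beta^2=-i\kappa$, and
\[
u_\kappa(t,x)=\chi(x)\,\mathrm{Re}\big[-\beta^{-1}b(t)\,e^{i\kappa t-\beta x}\big].
\]
Then $u_\kappa(T)=0$, $\partial_xu_\kappa(t,1)=0$ and $\partial_xu_\kappa(t,0)=b(t)\cos(\kappa t)$. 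The source $f=-\partial_tu_\kappa-\partial_{xx}u_\kappa$ equals $\mathrm{Re}\big[\beta^{-1}b'(t)e^{i\kappa t-\beta x}\big]$ on $[0,\delta]$, plus cutoff terms of size $O(e^{-\sqrt{\kappa/2}\,\delta})$. Hence $\iintQ e^{-2s\vphi_m}|f|^2$ and the observation integral are both $O(\kappa^{-3/2})$. On the other hand, $\int_0^T s\lambda e^{-2s\vphi_m^*}\xi_m^*|b|^2\cos^2(\kappa t)\,dt\to\frac12\int_0^T s\lambda e^{-2s\vphi_m^*}\xi_m^*|b|^2>0$. So for large $\kappa$ the right-hand side of \eqref{rev_est1} is negative, while the left-hand side is nonnegative.

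What can be proved is the symmetric version with $+\int_0^T s\lambda e^{-2s\vphi_m^*}\xi_m^*\big(|f_1|^2+|f_2|^2\big)$. This is the one-dimensional form of the usual estimate with $|\partial_nu|^2$ on the boundary. It is also all the paper needs later: for $u=\sigma_x$ both traces $\sigma_{xx}(t,0)$ and $\sigma_{xx}(t,1)$ vanish, because $\sigma_t=0$ and $\bar u=0$ at $x=0,1$.

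\textbf{The $z_tz_x$ term is not a pure time derivative.} Even for the corrected estimate, your treatment of this term is not valid. At $x_i\in\{0,1\}$ one has $z_x=e^{-s\vphi_m^*}f_i+s\lambda\nu'(x_i)\xi_m^*z$, so
\[
z_xz_t=e^{-s\vphi_m^*}f_i\,z_t+\tfrac12\,s\lambda\nu'(x_i)\xi_m^*\,\partial_t|z|^2 .
\]
Only the second piece is a time derivative. The first, $\int_0^T e^{-s\vphi_m^*}f_i\,z_t(t,x_i)\,dt$, can be integrated by parts in $t$ only by differentiating $f_i$, which is not available. Nothing else in your identity controls it either: $\|M_2z\|^2$ gives $z_t$ only in the interior (with weight $s^{-1}\xi_m^{-1}$), and the boundary terms give only $z$ and $z_x$ at $x_i$.

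This term is not negligible. Since $e^{-s\vphi_m^*}f_i=z_x-s\lambda\nu'(x_i)\xi_m^*z$, the favourable boundary terms dominate $\frac12 s\lambda|\nu'(x_i)|\xi_m^*e^{-2s\vphi_m^*}|f_i|^2$. If the term could be discarded, you would obtain the estimate with minus signs in front of both boundary integrals, which the example above rules out. Handling this term is precisely the extra idea that nonhomogeneous Neumann data require, and your sketch does not supply it.

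\textbf{A smaller point.} The zeroth-order time term in $M_1$ must be $-s\vphi_{m,t}z$. Moving $2s\vphi_{m,t}z$ into $g$ is not harmless: $\|s\vphi_{m,t}z\|^2\le Cs^2T^2\iintQ\xi_m^{2+2/m}|z|^2$ is absorbed by $s^3\lambda^4\iintQ\xi_m^3|z|^2$ only if $s\gtrsim T^{2m-2}$. The threshold $s\ge C(T^{2m}+T^{2m-1})$ does not guarantee this for small $T$.
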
}

Using the above lemma, we obtain the Carleman estimate for the second order parabolic operator. 
\begin{theorem}\label{thm-Fur-Ima}
	Let $\vphi_m$ and $\xi_m$ be given by \eqref{weight_function}. Then, there exist positive constants $\lambda_1$, $\mu_1$ and $C$, such that
{	\begin{equation}\label{car h}
		\begin{aligned} 
			\mc{I}_H(s,\lambda; \sigma)
			&	\leq C \left(\iintQ e^{-2s \vphi_m} |\sigma_x|^2 + \iintQ e^{-2s \vphi_m} |\sigma_{xx}|^2+ s^3\lambda^4 \iint_{(0,T)\times\mathcal{O}_0}  e^{-2s\vphi_m} \xi^3_m |\sigma_{x}|^2\right)\end{aligned} 
	\end{equation} }
for every $\lambda \geq \lambda_1$, $s\geq s_1$, where $s_1:={\mu_1(e^{mCT} T^{m}+T^{2m-1}+T^{2m})}$, and 
	$\sigma\in L^2(0,T; H^3(0,1)\cap H^1_{0}(0,1))\cap H^1(0,T; H^1(0,1))$ with $L\sigma:=\sigma_t+\gamma \sigma_{xx}+\bar u(x) \sigma_x={0}$.  
Here the constants $\lambda_1,\mu_1$ are independent of $T$ and the constant $C$ is independent of $T, \lambda, s$ but may depend on $\lambda_1, \mu_1, \mathcal{O}_0$ and the coefficients in $L$.
\end{theorem}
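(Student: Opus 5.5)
The plan is to differentiate the equation $L\sigma=0$ in $x$ and apply \Cref{rev_lm} to $u:=\sigma_x$. Since $\sigma\in L^2(0,T;H^3\cap H^1_0)\cap H^1(0,T;H^1)$, the function $u$ is regular enough. Differentiating $\sigma_t+\gamma\sigma_{xx}+\bar u\sigma_x=0$ gives
\begin{equation*}
-u_t-\gamma u_{xx} = \bar u\, u_x + \bar u'\, u .
\end{equation*}
After a harmless rescaling of space (or simply absorbing $\gamma$ into the constants, since the proof of \Cref{rev_lm} is insensitive to a constant diffusion coefficient), this is a backward heat equation with source $f:=\gamma^{-1}(\bar u u_x+\bar u' u)$. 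The Neumann data are $u_x(t,0)=\sigma_{xx}(t,0)$ and $u_x(t,1)=\sigma_{xx}(t,1)$. Here we use the boundary condition $\sigma=0$ at $x=0,1$: then $\sigma_t=0$ there, so the equation yields $\gamma\sigma_{xx}=-\bar u\sigma_x$ at the boundary. Because $\bar u(0)=\bar u(1)=0$, we get $\sigma_{xx}(t,0)=\sigma_{xx}(t,1)=0$. So $f_1=f_2=0$ and the boundary terms in \eqref{rev_est1} vanish.

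Applying \eqref{rev_est1} with $\omega=\mathcal O_0$ gives
\begin{equation*}
\iintQ s\lambda^2 e^{-2s\vphi_m}\xi_m|\sigma_{xx}|^2+s^3\lambda^4 e^{-2s\vphi_m}\xi_m^3|\sigma_x|^2 \le C\Big(s^3\lambda^4\iint_{(0,T)\times\mathcal O_0}e^{-2s\vphi_m}\xi_m^3|\sigma_x|^2+\iintQ e^{-2s\vphi_m}|\bar u\sigma_{xx}+\bar u'\sigma_x|^2\Big),
\end{equation*}
valid for $\lambda\ge C$ and $s\ge C(T^{2m}+T^{2m-1})$. The left side is exactly $\mc I_H(s,\lambda;\sigma)$. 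Using $\bar u\in C^1([0,1])$, the source term is bounded by $C\|\bar u\|_{C^1}^2\iintQ e^{-2s\vphi_m}(|\sigma_x|^2+|\sigma_{xx}|^2)$, which is precisely the form of \eqref{car h}. Up to the dependence of the constants on the coefficients of $L$, this already gives the claimed inequality.

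The remaining issue, which I expect to be the only delicate point, is bookkeeping of the constants and of the threshold $s_1$. The factor $e^{mCT}T^m$ in $s_1$ suggests that the authors intend to additionally absorb the lower-order terms into the left-hand side, or to match thresholds coming from the fourth-order estimate later. Absorption works because $\xi_m\ge c\,T^{-2m}$, so $s\lambda^2\xi_m\ge s\lambda^2 cT^{-2m}$ dominates $C\|\bar u\|_{C^1}^2$ once $s\ge CT^{2m}$ (and similarly for $s^3\lambda^4\xi_m^3$). I would therefore take $s_1=\mu_1(e^{mCT}T^m+T^{2m-1}+T^{2m})$. This is at least the threshold of \Cref{rev_lm}, so the estimate holds in the stated range. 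I would also make sure $\lambda_1$ and $\mu_1$ depend only on $\gamma$, $\|\bar u\|_{C^1}$ and $\mathcal O_0$, not on $T$. Finally, a density argument justifies differentiating the equation and evaluating traces for $\sigma$ in the stated regularity class.
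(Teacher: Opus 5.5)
Your proof is correct, and it departs from the paper's at the one step that matters: the boundary term.

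\textbf{What the paper does.} It also applies \Cref{rev_lm} to $\sigma_x$, but it treats the Neumann trace $\sigma_{xx}(t,1)$ as unknown. It discards the $x=0$ term by its sign and bounds $s\lambda\int_0^T e^{-2s\varphi_m^*}\xi_m^*|\sigma_{xx}(t,1)|^2$ through the following chain:
\begin{itemize}
\item the embedding $H^\kappa\hookrightarrow C([0,1])$;
\item interpolation between $H^1$ and $H^3$, followed by Young's inequality;
\item the auxiliary function $\sigma^*=\eta\sigma$ and the $H^3$ parabolic regularity of the Appendix;
\item absorption into the left-hand side by taking $\lambda$ large.
\end{itemize}
That step is exactly where the threshold $s\ge e^{mCT}T^m$ in $s_1$ comes from. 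It does not come from absorbing lower-order terms, as you guessed; that absorption happens later, in Step 1 of the proof of \Cref{Thm_Carleman_main}, and needs only $s\ge CT^{2m}$.

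\textbf{Why your route avoids it.} Every term of $\sigma_t+\gamma\sigma_{xx}+\bar u\sigma_x=0$ lies in $L^2(0,T;H^1(0,1))$, so the identity can be evaluated at $x=0,1$ for a.e.\ $t$. There $\sigma_t(t,0)=\frac{d}{dt}\sigma(t,0)=0$ and $\bar u(0)=0$, and likewise at $x=1$. Hence $\sigma_{xx}=0$ on the lateral boundary, and both boundary terms in \eqref{rev_est1} vanish. This argument is rigorous directly in the stated regularity class, so no density step is needed.

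\textbf{What your route buys.} You get \eqref{car h} already for $\lambda\ge C$ and $s\ge C(T^{2m}+T^{2m-1})$, which is stronger than the statement. The factor $e^{mCT}T^m$ in $s_1$ is superfluous, and it is the only source of the term $e^{mMT}/T^m$ in \eqref{obser-1}. The paper's route never uses this trace identity, but it is much heavier and yields the worse threshold.

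\textbf{One small correction.} For $\gamma\neq 1$, it is the time change $\tau=\gamma t$, not a rescaling of space, that turns the equation into $-\tilde u_\tau-\tilde u_{xx}=\gamma^{-1}(\bar u u_x+\bar u' u)$ on the same interval $(0,1)$. This change multiplies the weights by $\gamma^{-2m}$, which is compensated by applying \Cref{rev_lm} with $\gamma^{2m}s$ in place of $s$. The paper also passes over this point silently.
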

{ This result is similar to \cite[Theorem 3.1]{Cerpa-Mercado-Pazoto}. In comparison to the mentioned reference, here we consider the variable coefficient of $\sigma_x$ in the definition of $L$ and we determine the dependency of the parameter $s_1$ on $T$ explicitly. We provide a sketch of the proof of the above theorem which is similar to that of \cite[Theorem 3.1]{Cerpa-Mercado-Pazoto}.  
\paragraph{Proof of \Cref{thm-Fur-Ima}}
{ Let $\sigma \in L^2(0,T; H^3(0,1)\cap H^1_{0}(0,1))\cap H^1(0,T; H^1(0,1))$ satisfying $L\sigma=0$, where $L$ is defined in \Cref{thm-Fur-Ima}. Thus, $\sigma_x\in L^2(0, T; H^2(0, 1)) \cap L^\infty(0, T; L^2(0, 1))$ and $\sigma_x$ satisfies $(\sigma_x)_t+\gamma (\sigma_x)_{xx}=-\bar u'\sigma_x-\bar u \sigma_{xx}.$} 
Thus using \Cref{rev_lm} for $\sigma_x,$ we have
\begin{align}\label{rev_car_int}
\notag&\iintQ s \lambda^2 e^{-2s  \vphi_m} \xi_m |\sigma_{xx}|^2 + s^3 \lambda^4 \iintQ e^{-2s  \vphi_m} \xi_m^3 |\sigma_x|^2
\leq C \bigg( \iintQ e^{-2s  \vphi_m} |\sigma_x|^2 \\
&+ \iintQ e^{-2s  \vphi_m} |\sigma_{xx}|^2+
\iint_{(0,T)\times\mathcal{O}_0}  s^3 \lambda^4 e^{-2s  \vphi_m} \xi_m^3 |\sigma_x|^2 + 
\int_0^T s \lambda e^{-2s  \vphi_m^*(t)} \xi_m^*(t) |\sigma_{xx}(t, 1)|^2
\bigg),
\end{align}
where $s \geq C(T^{2m} + T^{2m-1})$, $C(\norm{\bar u'}_{L^{\infty}(0,1)}, \norm{\bar u}_{L^{\infty}(0,1)}, \mathcal{O}_0)>0$ independent of $T, \lambda, s$. 
{ We will find an estimate for the boundary term on the right-hand side of \eqref{rev_car_int}.
Note that for $\sigma \in L^2(0, T; H^3(0, 1)\cap H^1_0(0, 1))\cap H^1(0, T; H^1(0, 1))$, using the embedding $H^{\kappa}(0,1)\hookrightarrow C([0, 1]) \text{ for } \kappa>\frac{1}{2}$, we have for a.e. 
$t\in (0, T)$, $|\sigma_{xx}(t, 1)| \le C \|\sigma_{xx}(t,\cdot)\|_{H^\kappa(0, 1)}$ for any $\frac{1}{2}<\kappa<1$, where $C$ is the embedding constant. 
Thus we get 
\begin{align}\label{est-carle1}
s\lambda \int_0^T e^{-2s  \vphi_m^*(t)} \xi_m^*(t) |\sigma_{xx}(t, 1)|^2 &\le C s\lambda \int_0^T e^{-2s\vphi_m^*(t)} \xi_m^*(t) \|\sigma_{xx}(t, \cdot)\|^2_{H^{\kappa}(0, 1)} \nonumber \\
&\le C s\lambda \int_0^T e^{-2s\vphi_m^*(t)} \xi_m^*(t) \|\sigma(t, \cdot)\|^2_{H^{\kappa+2}(0, 1)}, &\quad \mbox{for any}\quad \frac{1}{2}<\kappa<1.
\end{align}
Denoting $r=\kappa+2$ with $\frac{1}{2}<\kappa<1$, we get $r \in \left( \frac{5}{2}, 3 \right)$. 
Let us take $m > 3$, and set $r=\frac{3m+1}{m+1}$ noting that $\frac{3m+1}{m+1} \in \left( \frac{5}{2}, 3 \right).$
Next we denote $\theta=\frac{1}{m+1} \in \left(0, \frac{1}{4} \right),$ which further implies $r = \theta + 3(1 - \theta)$. Using interpolation theorem
for Sobolev spaces, from \eqref{est-carle1}, we have
\begin{equation}\label{rev_est2}
\begin{array}{l}
\displaystyle
 s \lambda \int_0^T  e^{-2s  \vphi_m^*(t)} \xi_m^*(t) |\sigma_{xx}(t, 1)|^2\leq C s\lambda \int_0^T e^{-2s\vphi_m^*(t)} \xi_m^*(t) \|\sigma(t, \cdot)\|_{H^r(0,1)}^2,   \\
\displaystyle \hspace{1cm} \leq C s\lambda \int_0^T e^{-2s\vphi_m^*(t)} \xi_m^*(t) 
\left( \|\sigma(t, \cdot)\|_{H^1(0,1)}^\theta \|\sigma(t, \cdot)\|_{H^3(0,1)}^{1 - \theta} \right)^2\\
\displaystyle \hspace{1cm} = C\lambda \int_0^T (s e^{-2s\vphi_m^*(t)}\xi_m^*(t))^{3\theta} \|\sigma(t, \cdot)\|_{H^1(0,1)}^{2\theta} 
	(se^{-2s\vphi_m^*(t)}\xi_m^*(t))^{1 - 3\theta} \|\sigma(t, \cdot)\|_{H^3(0,1)}^{2(1 - \theta)} \notag \\
 \displaystyle \hspace{1cm} \leq \frac{1}{4} s^3 \lambda \int_0^T e^{-2s\vphi_m^*(t)} (\xi_m^*(t))^3 \|\sigma(t, \cdot)\|_{H^1(0,1)}^2 
	+ C s^p \lambda \int_0^T e^{-2s\vphi_m^*(t)} (\xi_m^*(t))^p \|\sigma(t, \cdot)\|_{H^3(0,1)}^2,  \notag 
\end{array}
\end{equation}
where 
$p = \frac{1 - 3\theta}{1 - \theta} = \frac{3r - 7}{r - 1}=\left(1-\frac{2}{m}\right),
$
and $C>0$ a constant independent of $T.$
}


Let us define 
$\eta(t) = s^{\frac{1}{2}-
\frac{1}{m}}  \sqrt{\lambda} e^{-s \vphi^*_m} (\xi^*_m)^{\frac{1}{2} - \frac{1}{m}}
$ in $[0,T]$
and 
$
\sigma^* = \eta \sigma.
$
As $L\sigma=0,$ $\sigma^*$ satisfies the following equation
\begin{equation}\label{rev_parabolic}
	\begin{cases}
\sigma^*_t+\gamma \sigma^*_{xx}+\bar u \sigma^*_x  =  \eta' \sigma &\text{ in }   (0,T) \times (0,1),\\
\sigma^*(t,0)=\sigma^*(t,1)=0 & \text{ in }   (0,T),\\
\sigma^*(T,x)=0 & \text{ in }   (0,1).
\end{cases}
\end{equation}
Using the definition of $\eta$ and $\sigma^*$, one can directly obtain the following identity
\begin{equation} \label{rev_est3}
	s^p \lambda \int_0^T e^{-2 s \vphi^*_m} (\xi^*_m)^p \|\sigma\|^2_{H^3(0,1)} 
	=  \|\sigma^*\|^2_{L^2(0,T; H^3(0,1))}.
\end{equation}
By parabolic regularity for the equation \eqref{rev_parabolic} and Poincar\'e inequality for $\sigma$, we get
\begin{align}\label{rev_est4}
\|\sigma^*\|^2_{L^2(0,T; H^3(0,1))} 
\leq Ce^{CT} \|\eta' \sigma\|^2_{L^2(0,T; H^1_0(0,1))} 
\leq Ce^{CT} \|\eta' \sigma_x\|^2_{L^2(0,T; L^2(0,1))}, 
\end{align}
where $C$ is a positive constant independent of $T,$ see \Cref{rev_app} for details. 

Let us first estimate the term in right hand side of the above inequality. Thanks to \eqref{weight-deri-t}, we have
\begin{align*}
	\|\eta' \sigma_x\|_{L^2(0,T; L^2(0,1))}\leq C s^{\frac{1}{2} - \frac{1}{m}}  \sqrt{\lambda} \left(s T+ T^{2m+1}\right)\| e^{-s \vphi^*_m} (\xi^*_m)^{\frac{3}{2} }\sigma_x\|_{L^2(0,T; L^2(0,1))}.
\end{align*}
As, $s\geq C_1 T^{2m}$ for some $C_1>0$ independent of $T$, we can further write the above inequality as
\begin{align*}
	\|\eta' \sigma_x\|_{L^2(0,T; L^2(0,1))}\leq C  T s^{\frac{3}{2} - \frac{1}{m}}  \sqrt{\lambda} \| e^{-s \vphi^*_m} (\xi^*_m)^{\frac{3}{2}}\sigma_x\|_{L^2(0,T; L^2(0,1))},
\end{align*}
where $C>0$ is a positive constant independent of $T.$  
Using the above estimate in \eqref{rev_est4}, we get
\begin{align}\label{rev_est6}
	\|\sigma^*\|^2_{L^2(0,T; H^3(0,1))} 
	\leq Ce^{CT} T^2 s^{{3} - \frac{2}{m}}  {\lambda} \| e^{-s \vphi^*_m} (\xi^*_m)^{\frac{3}{2}}\sigma_x\|^2_{L^2(0,T; L^2(0,1))}.
\end{align}
Further assuming $s\geq e^{mCT}T^m,$ we obtain the following estimate
\begin{align*}
		\|\sigma^*\|^2_{L^2(0,T; H^3(0,1))} \leq C   s^{{3}}  {\lambda} \| e^{-s \vphi^*_m} (\xi^*_m)^{\frac{3}{2}}\sigma_x\|^2_{L^2(0,T; L^2(0,1))}.
\end{align*}
Thus, combining \eqref{rev_est2}, \eqref{rev_est3} and the above estimate we deduce
\begin{align}\label{rev_est7}
\notag&	s\lambda \int\int_{\Sigma} e^{-2s\vphi_m^*} \xi_m^* |\sigma_{xx}|^2 \\
	&\leq \frac{C_2}{4} s^3 \lambda \int_0^T e^{-2s\vphi_m^*} (\xi_m^*)^3 |\sigma_x|^2+ C_3  s^{{3}}  {\lambda} \| e^{-s \vphi^*_m} (\xi^*_m)^{\frac{3}{2}}\sigma_x\|^2_{L^2(0,T; L^2(0,1))},
	\end{align}
for some $ C_2, C_3>0$ independent of $T.$
Therefore choosing ${\lambda^3}>\max\{\frac{C_2}{4}, C_3\}>0$ and using  \eqref{rev_wght}, one can write the following
\begin{align}\label{rev_est5}
		s \lambda \int_0^T e^{-2s  \vphi_m^*(t)} \xi_m^*(t) |\sigma_{xx}(t, 1)|^2 \leq   \frac{1}{2} s^3  {\lambda}^4\iintQ   e^{-2s \vphi_m} \xi_m^{{3}}|\sigma_x|^2,
\end{align}
where $s\geq C(e^{mCT} T^{m}+T^{2m-1}+T^{2m})$, for some constant $C>0$ independent of $T, s, \lambda$. Note that
\eqref{rev_est5} can be absorbed by the left hand side of \eqref{rev_car_int}, and we get the desired result.
}

Next, we mention the following Carleman estimates for the fourth order parabolic operator. 
\begin{theorem}\label{carl ch}
	Let $\vphi_m$ and $\xi_m$ be given by \eqref{weight_function}.   Then, there exist positive constants $\lambda_2$, $\mu_2$ and $C$, such that
	\begin{equation}\label{car ch}
		\begin{aligned} 
			\mc{I}_{CH}(s,\lambda; v)
			&	\leq  s^7\lambda^8 \iint_{(0,T)\times\mathcal{O}_0}  e^{-2s\vphi_m} \xi^7_m |v|^2+C\iintQ  e^{-2s\vphi_m}  |Pv|^2, \end{aligned} \end{equation}
	for every $\lambda \geq \lambda_2$, $s\geq s_2$, where $s_2:=\mu_2(T^{2m-1}+T^{2m}),$ and 
	$v\in L^2(0,T; H^4(0,1))\cap H^1(0,T; L^2(0,1))$ with $Pv=v_t-v_{xxxx}-\gamma_2 v_{xx} + \bar u(x) v_x+\bar u'(x) v$.  
Here the constants $\lambda_2, \mu_2$ are independent of $T$ and the constant $C$ is independent of $T, \lambda, s$ but may depend on $\lambda_2, \mu_2, \mathcal{O}_0$ and the coefficients in $P$.
\end{theorem}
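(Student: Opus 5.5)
We will reduce \Cref{carl ch} to the known Carleman estimate for the fourth order operator $v_t - v_{xxxx}$ with the boundary conditions $v_x=v_{xxx}=0$ at $x=0,1$. That estimate is proved in \cite{Guzman} with weights of exactly the type \eqref{weight_function} (built from the function $\nu$ in \eqref{definition_nu}, with $\nu''(0)=\nu''(1)=0$ chosen precisely to handle these boundary conditions). The lower order terms $-\gamma_2 v_{xx}+\bar u v_x+\bar u' v$ are then absorbed. The only genuinely new points are the exponent $m$ in the weights and the explicit dependence of the threshold $s_2$ on $T$.

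\textbf{Step 1: the principal estimate.} For $v\in L^2(0,T;H^4(0,1))\cap H^1(0,T;L^2(0,1))$ with $v_x=v_{xxx}=0$ on the boundary, we first establish
\[
\mc{I}_{CH}(s,\lambda;v)\le C\Big( s^7\lambda^8\iint_{(0,T)\times\mathcal{O}_0} e^{-2s\vphi_m}\xi_m^7|v|^2+\iintQ e^{-2s\vphi_m}|v_t-v_{xxxx}|^2\Big)
\]
for $\lambda\ge\lambda_2$ and $s\ge \mu_2(T^{2m-1}+T^{2m})$.

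To do this we follow the proof of \cite{Guzman}. Set $z=e^{-s\vphi_m}v$ and split the conjugated operator into its self-adjoint and skew-adjoint parts $P_1z+P_2z$. We then expand $2(P_1z,P_2z)_{L^2(Q_T)}$ and integrate by parts.

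The interior terms are controlled using \eqref{weight-deri-x} and the bound $|\nu'|\ge\widehat c$ outside $\mathcal{O}_0$. The time-derivative terms are handled with \eqref{weight-deri-t} and \eqref{weight_estimate-t-m}. Since these produce factors like $sT^{2m-1}\xi_m^2$ and $T^{4m-1}\xi_m^2$, they are absorbed by the dominant $s^7\lambda^8\xi_m^7$-type terms once $s\ge\mu_2(T^{2m-1}+T^{2m})$. Here we also use $\xi_m\ge c\,T^{-2m}$, which follows from the definition. This bookkeeping is how we obtain the stated form of $s_2$.

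\textbf{Step 2: the boundary terms.} These are the main obstacle. With the Neumann-type conditions $v_x=v_{xxx}=0$, the integrations by parts leave boundary integrals involving $z$, $z_{xx}$ and their products with derivatives of $\vphi_m$. Because $\nu(0)=\nu(1)=0$, the weights $\vphi_m,\xi_m$ are spatially constant to first order at the boundary. Because $\nu''=0$ there, the terms with $\vphi_{m,xx}$ vanish. The remaining boundary terms carry $\nu'(0)>0$ and $\nu'(1)<0$, and we expect them either to have a favourable sign or to be of lower order.

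Any term that does not have a good sign we plan to bound by trace inequalities, $|z_{xx}(t,1)|^2\le \epsilon\|z_{xxx}\|^2+C_\epsilon\|z_{xx}\|^2$ and similarly for $z$. This controls it by the interior quantities in $\mc{I}_{CH}$ with a lower power of $s\lambda$, so it can be absorbed for $s,\lambda$ large. If this absorption fails for some term, we would instead adapt the interpolation trick used in the proof of \Cref{thm-Fur-Ima}.

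Once the weighted estimates for $z$ are in hand, we return to $v$ and recover the terms $s^{-1}\xi_m^{-1}(|v_t|^2+|v_{xxxx}|^2)$ directly from the equation satisfied by $z$.

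\textbf{Step 3: absorbing the lower order terms.} Write $v_t-v_{xxxx}=Pv+\gamma_2v_{xx}-\bar u v_x-\bar u'v$. Since $\bar u\in C^1([0,1])$, we have
\[
\iintQ e^{-2s\vphi_m}|v_t-v_{xxxx}|^2\le 2\iintQ e^{-2s\vphi_m}|Pv|^2+C\iintQ e^{-2s\vphi_m}\big(|v_{xx}|^2+|v_x|^2+|v|^2\big).
\]
The last integral is dominated by $\mc{I}_{CH}$, using $1\le C T^{6m}\xi_m^3$ together with $s\ge\mu_2T^{2m}$, so that $s^3\lambda^4\xi_m^3\gg1$. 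It is therefore absorbed into the left-hand side after enlarging $\lambda_2$ and $\mu_2$. This gives \eqref{car ch}, with the constant in front of the observation term normalized by rescaling if necessary.
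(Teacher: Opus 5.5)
Your skeleton is the paper's argument. The paper obtains \Cref{carl ch} by rerunning the proof of \cite[Proposition 3.1]{Guzman} and changing only the estimates of the time derivatives of the weights, via \eqref{weight-deri-t}--\eqref{weight_estimate-t-m}, which produces $s_2=\mu_2(T^{2m-1}+T^{2m})$. Your Step~1 bookkeeping and your Step~3 absorption of $-\gamma_2v_{xx}+\bar u v_x+\bar u'v$ are fine.

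The weak point is Step~2. There you treat the boundary terms as a new obstacle and outline your own treatment; as written it contains errors, and its fallback would fail.
\begin{itemize}
\item \emph{The structural claims are wrong.} At $x=0,1$ we have $\vphi_{m,x}=-\lambda\nu'\xi_m\neq0$. Also $\vphi_{m,xx}=-(\lambda^2\nu'^2+\lambda\nu'')\xi_m$ equals $-\lambda^2\nu'^2\xi_m\neq0$ there; only the $\lambda\nu''$ part drops.
\item \emph{Trace inequalities cannot absorb the top-order boundary terms.} With $z=e^{-s\vphi_m}v$ and $v_x=v_{xxx}=0$, one gets at $x\in\{0,1\}$ that $z_x=-s\vphi_{m,x}z$ and $z_{xxx}=-3s\vphi_{m,x}z_{xx}+(2s^3\vphi_{m,x}^3-s\vphi_{m,xxx})z$. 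Hence a boundary term such as $\big[c\,s\vphi_{m,x}|z_{xxx}|^2\big]_0^1$ (from pairing $z_{xxxx}$ with $s\vphi_{m,x}z_{xxx}$) already contains $s^3\lambda^3\xi_m^3|z_{xx}|^2$ and $s^7\lambda^7\xi_m^7|z|^2$ at the endpoints. Bound $s^3\lambda^3\xi_m^3|z_{xx}(t,1)|^2$ by $s^3\lambda^3\xi_m^3(\delta\|z_{xx}\|^2+\delta^{-1}\|z_{xxx}\|^2)$. Absorbing this into the terms $s^3\lambda^4\xi_m^3|z_{xx}|^2$ and $s\lambda^2\xi_m|z_{xxx}|^2$ of $\mc I_{CH}$ would require $s^2\lambda\xi_m^2\lesssim\delta\lesssim\lambda$. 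That is impossible since $s\xi_m\gg1$, and the same obstruction occurs for $|z|^2$ against $|z_x|^2$.
\end{itemize}
Such terms can only be controlled by their sign or exact structure, which is what Guzm\'an's computation with $\nu$ as in \eqref{definition_nu} provides.

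The point you actually need, and the only one the paper uses, is that nothing has to be redone at the boundary. All $x$-derivatives of $\vphi_m,\xi_m$ obey the same formulas as for $m=1$; only the factor $t^{-m}(T-t)^{-m}$ and the constant $\frac{m+1}{m}$ change. So Guzm\'an's boundary analysis carries over verbatim, with two substitutions: $\xi_m\ge cT^{-2m}$ replaces $\xi\ge cT^{-2}$ wherever powers of $\xi$ are compared, and \eqref{weight-deri-t} is used wherever $\partial_t$ of a weight appears.

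Finally, your closing claim that the coefficient of the observation term can be normalized to $1$ ``by rescaling'' is false. $\mc I_{CH}$ contains $s^7\lambda^8\iintQ e^{-2s\vphi_m}\xi_m^7|v|^2$ and $s^5\lambda^6\iintQ e^{-2s\vphi_m}\xi_m^5|v_x|^2$. So the inequality with coefficient exactly $1$, applied to any nontrivial solution of $Pv=0$, would force $v_x\equiv0$ and $v\equiv0$ outside $(0,T)\times\mathcal O_0$, hence $v\equiv0$. The estimate is true only with a constant $C$ in front of the observation integral as well. That is also how it is used in \eqref{Add_carlemans}, and it is what your argument actually gives.
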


This theorem was proved in \cite[Proposition 3.1]{Guzman},  for the weight defined in \eqref{weight_function} with $ m= 1 $.   In order to prove \Cref{carl ch},  for the  weight function defined   in \eqref{weight_function} with $m>3$  one can follow the same steps as in  the proof  of \cite[Proposition 3.1]{Guzman}. Note that  the only changes occur in the time derivative estimates of weight functions and by taking into account the estimate \eqref{weight_estimate-t-m} we  get the inequality \eqref{car ch}.
 We also refer \cite{Zhou}, \cite{Cerpa-Mercado-Pazoto} where a similar Carleman estimate has been addressed with different boundary conditions.

Our next task is to combine the above two Carleman estimates and derive a joint Carleman estimate for the adjoint equation \eqref{CH adj} which essentially helps to derive the observability inequality \eqref{obser-1}. 
\begin{theorem}[\textbf{Joint Carleman estimate for} \eqref{CH adj}]\label{Thm_Carleman_main}
Let the weight functions $(\vphi_m, \xi_m)$  be given by \eqref{weight_function}. Then, there exist positive constants $\lambda_0$, $\mu_0$ and $C$ such that for any $(\sigma_T, v_T)\in (L^2(0,1))^2$, $(\sigma, v)$, the solution to \eqref{CH adj}, satisfies
			\begin{align}\label{carleman-joint}
				s^3\lambda^4 \iintQ e^{-2s\vphi_m} \xi^3_m |\sigma_x|^2+ 
			s^7\lambda^8 \iintQ e^{-2s\vphi_m} \xi_m^7 |v|^2\leq C s^{39}\lambda^{40} \iint_{(0,T)\times\mathcal{O}} e^{-2s\vphi_m} \xi_m^{39} |v|^2.
		\end{align}
		for all $\lambda \geq \lambda_0$ and $s\geq s_0$, where $s_0:= {\mu_0(e^{mCT}T^{m}+T^{2m-1}+T^{2m})}$. 
Here the constants $\lambda_0, \mu_0$ are independent of $T$ and the constant $C$ is independent of $T$ but may depend on $\lambda_0, \mu_0$, the coefficients in \eqref{CH adj} and the set $\mathcal{O}$.
	\end{theorem}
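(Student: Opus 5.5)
Both Carleman estimates are applied to the adjoint system \eqref{CH adj}. The coupling term is absorbed into the fourth order estimate. The local term in $\sigma$ is then removed by expressing $\sigma_x$ on a subdomain through $v$, using the $v$-equation. By density we may assume $(\sigma_T,v_T)$ smooth, so that $\sigma$ and $v$ have the regularity required in \Cref{thm-Fur-Ima} and \Cref{carl ch}; the general case follows by passing to the limit.

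\textbf{Step 1 (adding the two estimates).} Apply \Cref{carl ch} to $v$ with $Pv=\gamma_1\sigma_x$. This gives
\[
\mc{I}_{CH}(s,\lambda;v)\le s^7\lambda^8\iint_{(0,T)\times\mathcal{O}_0} e^{-2s\vphi_m}\xi_m^7|v|^2 + C\iintQ e^{-2s\vphi_m}|\sigma_x|^2 .
\]
Apply \Cref{thm-Fur-Ima} to $\sigma$, with the observation set replaced by some $\mathcal{O}_1\subset\subset\mathcal{O}_0$; the weight $\nu$ is chosen relative to $\mathcal{O}_1$. Add the two inequalities. The lower order terms $\iintQ e^{-2s\vphi_m}(|\sigma_x|^2+|\sigma_{xx}|^2)$ are absorbed by $\mc{I}_H$: since $\xi_m\ge cT^{-2m}$, it suffices to take $s\lambda^2\gg T^{2m}$, which holds for $s\ge s_0$ and $\lambda$ large. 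The result is
\[
\mc{I}_H(s,\lambda;\sigma)+\mc{I}_{CH}(s,\lambda;v)\le C\Big(s^3\lambda^4\iint_{(0,T)\times\mathcal{O}_1}e^{-2s\vphi_m}\xi_m^3|\sigma_x|^2+s^7\lambda^8\iint_{(0,T)\times\mathcal{O}_0}e^{-2s\vphi_m}\xi_m^7|v|^2\Big).
\]

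\textbf{Step 2 (eliminating the local $\sigma_x$ term).} Take a cut-off $\theta\in C_c^\infty(\mathcal{O}_0)$ with $\theta=1$ on $\mathcal{O}_1$ and $0\le\theta\le1$. Since $\gamma_1\neq0$, the $v$-equation gives $\sigma_x=\gamma_1^{-1}Pv$. Hence
\[
s^3\lambda^4\iint\theta e^{-2s\vphi_m}\xi_m^3|\sigma_x|^2=\gamma_1^{-1}s^3\lambda^4\iint\theta e^{-2s\vphi_m}\xi_m^3\,\sigma_x\,(v_t-v_{xxxx}-\gamma_2v_{xx}+\bar u v_x+\bar u' v).
\]
Integrate by parts in $t$ and $x$ to move every derivative from $v$ onto $\sigma_x$ and onto the weight. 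The boundary terms vanish because of $\theta$ and because the weight vanishes at $t=0,T$. After this, $\sigma$ appears through $\sigma_{xt}$ and $\partial_x^k\sigma_x$ with $k\le4$. Eliminate $\sigma_t=-\gamma\sigma_{xx}-\bar u\sigma_x$, so that only pure space derivatives $\partial_x^{j}\sigma_x$, $j\le 4$, remain. Each derivative falling on the weight costs at most a factor $s\lambda\xi_m$ by \eqref{weight-deri-x}, and a time derivative costs at most $s\xi_m^2$ by \eqref{weight_estimate-t-m}. Differentiating the $\sigma$-equation in $x$ is harmless, because $\bar u$ is only $C^1$ but $\sigma$ solves a smooth-in-time parabolic problem.

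\textbf{Step 3 (absorbing the high derivatives of $\sigma$).} In the resulting terms, high derivatives of $\sigma$ multiply $v$ on $\mathcal{O}_0$. Use Young's inequality with a small $\epsilon$: put $\epsilon\, s^{-?}\xi_m^{-?}|\partial_x^j\sigma_x|^2$ on one side and $C_\epsilon s^{N}\lambda^{N+1}\xi_m^{N}|v|^2$ on the other. To absorb the $\sigma$ terms, we need local energy estimates for the higher derivatives of $\sigma$ in weighted norms. These come from parabolic regularity applied to $\rho\sigma$, where $\rho=s^a\lambda^b e^{-s\vphi_m^*}(\xi^*_m)^c$, exactly as in the proof of \Cref{thm-Fur-Ima} via \eqref{rev_parabolic}. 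Since $\rho\sigma$ satisfies a heat equation with source $\rho'\sigma$, we get control of $\|\rho\sigma\|_{L^2(H^3)}$ and of higher norms after one more differentiation. The left side $s^3\lambda^4\iint e^{-2s\vphi_m}\xi_m^3|\sigma_x|^2$ then dominates, provided $s\ge s_0$ absorbs the $e^{CT}$ factors. Iterating the powers, which roughly double through each use of Young's inequality, gives the exponent $s^{39}\lambda^{40}\xi_m^{39}$ on $|v|^2$ over $\mathcal{O}_0\subset\mathcal{O}$. The remaining $v$ local term $s^7\xi_m^7$ is trivially bounded by it.

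The main obstacle is Step 3. The weighted higher-order regularity for $\sigma$ (up to $\partial_x^5\sigma$) has to be controlled by the global term $\iint e^{-2s\vphi_m}\xi_m^3|\sigma_x|^2$. Doing this requires passing between the $x$-dependent weight $\vphi_m$ and $\vphi^*_m$; at this point I would use the condition $k>m$ in \eqref{weight_function} to compare $e^{-2s\vphi_m}$ with $e^{-2s\vphi^*_m}$. The powers of $s$ must be tracked carefully at the same time.
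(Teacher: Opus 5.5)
Your Step 1 and the substitution $\sigma_x=\gamma_1^{-1}Pv$ under a cut-off agree with the paper. The plan breaks down in Steps 2--3, and Step 3 is exactly the part of the proof that is missing.

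\textbf{Regularity.} Moving \emph{every} derivative of $Pv$ onto $\sigma_x$ produces $\partial_x^4(\theta e^{-2s\vphi_m}\xi_m^3\sigma_x)$, and hence $\partial_x^5\sigma$. Here $\bar u$ is only in $C^1([0,1])\cap H^2(0,1)$, since $\bar u''=(\bar u\bar u'-f_s)/\gamma$ with $f_s$ merely in $L^2$. So the equation $\sigma_t+\gamma\sigma_{xx}=-\bar u\sigma_x$ does not give $\partial_x^5\sigma\in L^2$ in general; the bootstrap stops at $H^4$, even locally. Smoothness in time does not help. It places $\sigma(t)$ in the domains of all powers of the generator, but these are not contained in $H^5$ unless $\bar u$ is smoother. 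So the pairing of $\partial_x^5\sigma$ with $v$ cannot be split by Young's inequality, and the weighted higher-order bounds you want in Step 3 are not available.

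\textbf{Weights.} Even setting regularity aside, the time-only weight device of \eqref{rev_parabolic} controls $\sigma$ only against $e^{-2s\vphi_m^*}$, the smallest value of the weight. Your local terms sit on $\mathcal{O}_0$, where $\nu$ attains its maximum and $e^{-2s\vphi_m}$ is largest. Splitting the weight as $e^{-s\vphi_m^*}\cdot e^{-2s\vphi_m+s\vphi_m^*}$, which is where a condition like $k>m$ would enter, puts roughly $e^{-4s\vphi_m+2s\vphi_m^*}$ on the observation term. You would therefore not land on \eqref{carleman-joint} as stated. The exponent $39$ is also asserted rather than derived.

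\textbf{The missing idea.} No derivative of $\sigma$ beyond $\sigma_{xx}$ is needed. The paper writes
$Pv=(v_t-\gamma v_{xx})-v_{xxxx}+(\gamma-\gamma_2)v_{xx}+\bar u v_x+\bar u' v$.
\begin{itemize}
\item In the first group it integrates by parts once in $t$ and twice in $x$. Then $\sigma$ enters only through $-(\sigma_t+\gamma\sigma_{xx})_x=(\bar u\sigma_x)_x$, i.e.\ through $\sigma_x$ and $\sigma_{xx}$, which are already in $\mc{I}_H$.
\item In the $v_{xxxx}$ term a single derivative is moved, giving $\sigma_{xx}v_{xxx}$.
\item The remaining terms are bounded directly by Cauchy--Schwarz.
\end{itemize}
The price is local terms in $v_x,v_{xx},v_{xxx}$. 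These are removed by a cascade: a cut-off, one integration by parts, and Young's inequality against the \emph{global} terms $s^{-1}\xi_m^{-1}|v_{xxxx}|^2$, $s\lambda^2\xi_m|v_{xxx}|^2$ and $s^3\lambda^4\xi_m^3|v_{xx}|^2$ of $\mc{I}_{CH}$. The powers go $s^5\xi_m^5|v_{xxx}|^2\to s^{11}\xi_m^{11}|v_{xx}|^2\to s^{21}\xi_m^{21}|v_x|^2\to s^{39}\xi_m^{39}|v|^2$, which is where $39$ comes from.

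In short, the higher-order information must come from the fourth-order Carleman estimate for $v$, which you keep on the left-hand side but never use. It cannot come from extra regularity of $\sigma$, which the coefficient $\bar u$ does not provide.
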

\begin{proof}
Let us first assume that $\sigma_T, v_T\in C^{\infty}_c(0,1).$ Let $(\sigma, v)$ be the solution of the adjoint problem \eqref{CH adj} and we have $(\sigma,v)\in  L^2(0,T; H^3(0,1)\cap H^1_{0}(0,1))\cap H^1(0,T; H^1(0,1))\times L^2(0,T; H^4(0,1))\cap H^1(0,T; L^2(0,1)).$ Then, $L\sigma=0$ and $Pv=\gamma_1 \sigma_x$, where $L$ and $P$ are given in Theorems \ref{thm-Fur-Ima} and \ref{carl ch} respectively.  We can apply two Carleman estimates \eqref{car h} and \eqref{car ch} for the solution $(\sigma,v)$ of the adjoint system respectively. 
{Furthermore, \eqref{car h} holds for all $s\geq s_1=\mu_1(T^{2m}+T^{2m-1}+e^{mCT}T^{m})$, whereas Carleman estimate \eqref{car ch} for the fourth order operator holds for all $s\geq s_2=\mu_2(T^{2m}+T^{2m-1})$. It is easy to check that, if we take $\widehat{\mu}:=\max\{\mu_1, \mu_2\}$, then \
	\begin{equation*}\widehat{\mu}\left(T^{2m}+T^{2m-1}+e^{mCT}T^{m}\right)\ge \max\{\mu_2(T^{2m}+T^{2m-1}), \mu_1(T^{2m}+T^{2m-1}+e^{mCT} T^{m})\}.
 \end{equation*}
	Therefore, both Carleman estimates hold true when $s\geq \widehat{s}:=\widehat{\mu}(T^{2m}+T^{2m-1}+e^{mCT} T^{m}).$}

Adding the Carleman estimates \eqref{car h} and \eqref{car ch}, we have 
	\begin{multline}\label{Add_carlemans}
		\mc{I}_{CH}(s,\lambda; v) + \mc{I}_{H}(s,\lambda; \sigma) 
	\leq C \bigg[{\iintQ  e^{-2s\vphi_m}|\sigma_{xx}|^2}+\iintQ  e^{-2s\vphi_m}|\sigma_x|^2 \\+
	 s^7\lambda^8 \int_0^T\int_{\mathcal{O}_0} e^{-2s\vphi_m} \xi_m^7 |v|^2 
	 + s^3\lambda^4 \int_0^T\int_{\mathcal{O}_0} e^{-2s\vphi_m} \xi_m^3 |\sigma_x|^2\bigg],
	\end{multline}
for all $\lambda\geq \widehat{\lambda}:= \max\{\lambda_1, \lambda_2\}$ and $s\geq \widehat{s}= \widehat\mu{(e^{mCT}T^{m}+T^{2m-1}+T^{2m})}$
 for some positive constant $C$ as obtained in Theorems \ref{thm-Fur-Ima} and \ref{carl ch}. 
\smallskip
 
\noindent 
{{\bf Step 1. Absorbing the lower order integrals.} Observing $1\leq  T^{2m}\xi_m(t,x), \forall (t,x)\in Q_T$, we get 
\begin{align}\label{lower_int}
\iintQ e^{-2s\vphi_m}|\sigma_{xx}|^2 \leq  T^{2m}\iintQ e^{-2s\vphi_m}\xi_m|\sigma_{xx}|^2, \\ \nonumber
\iintQ e^{-2s\vphi_m}|\sigma_x|^2 \leq  T^{6m}\iintQ e^{-2s\vphi_m}\xi_m^3|\sigma_x|^2.
\end{align}
For all $s\geq \max\{\widehat{s}, (C^{1/3}+C) T^{2m}\}$ and $\lambda\geq \max\{\widehat{\lambda}, \sqrt{2}\}$, we get 
\begin{align*}
C T^{2m}\iintQ e^{-2s\vphi_m}\xi_m|\sigma_{xx}|^2 \leq  \frac{1}{2} s\lambda^2 \iintQ e^{-2s\vphi_m}\xi_m|\sigma_{xx}|^2, \\
	C T^{6m}\iintQ e^{-2s\vphi_m}\xi_m^3|\sigma_x|^2 \leq  \frac{1}{2} s^3\lambda^4 \iintQ e^{-2s\vphi_m}\xi_m^3|\sigma_x|^2.
\end{align*}
Thus, the first and second terms in the right hand side of \eqref{Add_carlemans} can be absorbed respectively by the first and second integrals of $\mc{I}_{H}(s,\lambda; \sigma)$ appearing in the left hand side of \eqref{Add_carlemans}.} Finally it leads to the following 
\begin{align}\label{Add_carlemans-2}
	\mc{I}_{CH}(s,\lambda; v) + \mc{I}_{H}(s,\lambda; \sigma) 
	\leq C \bigg[
	s^7\lambda^8 \int_0^T\int_{\mathcal{O}_0} e^{-2s\vphi_m} \xi_m^7 |v|^2 
	+ s^3\lambda^4 \int_0^T\int_{\mathcal{O}_0} e^{-2s\vphi_m} \xi_m^3 |\sigma_x|^2\bigg],
\end{align}
for all $\lambda\geq \lambda_0$ and $s\geq \mu_0{(e^{mCT}T^{m}+T^{2m-1}+T^{2m})}$ where $\lambda_0$ and $\mu_0$ can be chosen suitably large enough satisfying $\lambda_0\geq \widehat{\lambda}$ and $\mu_0\geq \widehat{\mu}$. 

\smallskip 
\noindent 
 {\bf Step 2. Absorbing the observation integral in $\sigma_x$.}
Consider $\mathcal{O}_0$ and another non-empty open set $\mathcal{O}_1$ in such a way that  $\mathcal{O}_0 \subset \subset \mathcal{O}_1 \subset \subset \mathcal{O}$. Then, we consider the function 
\begin{align}\label{smooth_func}
	\zeta \in C^\infty_c(\mathcal{O}_1)  \ \text{ with } \ 0\leq \zeta \leq 1, \ \text{ and } \ \zeta =1 \ \text{ in } \ \mathcal{O}_0. 
\end{align}
Now, recall the adjoint system \eqref{CH adj},  one has (since $\gamma_1\neq 0$),
\begin{align*}
	\sigma_x = \frac{1}{\gamma_1} \left(v_t-v_{xxxx}-\gamma_2 v_{xx} + \bar u(x) v_x+\bar u'(x) v   \right), \quad \text{in } Q_T.
\end{align*}
Using this, we have 
\begin{align}\label{Obs-psi}
s^3\lambda^4 \int_0^T\int_{\mathcal{O}_0}& e^{-2s\vphi_m} \xi_m^3 |\sigma_x|^2 
\leq s^3\lambda^4 \int_0^T\int_{\mathcal{O}_1} \zeta e^{-2s\vphi_m} \xi_m^3 |\sigma_x|^2   \notag \\ 
&= \frac{1}{\gamma_1}s^3\lambda^4 \int_0^T\int_{\mathcal{O}_1} \zeta e^{-2s\vphi_m} \xi_m^3 \sigma_x \left(v_t-v_{xxxx}-\gamma_2 v_{xx} + \bar u(x) v_x+\bar u'(x) v   \right)  \notag \\ 
 &= \frac{1}{\gamma_1}s^3\lambda^4 \int_0^T\int_{\mathcal{O}_1} \zeta e^{-2s\vphi_m} \xi_m^3 \sigma_x \left((v_t-\gamma_1 v_{xx}) -v_{xxxx}+(\gamma_1-\gamma_2) v_{xx} + \bar u(x) v_x+\bar u'(x) v   \right)  \notag \\ 
 &=I_1+ I_2+ I_3+I_4+I_5 .
\end{align}
\noindent
(i) Let us start with the following. 
\begin{align}\label{A_1}
	I_1 &=  \frac{1}{\gamma_1}s^3\lambda^4 \int_0^T\int_{\mathcal{O}_1} \zeta e^{-2s\vphi_m} \xi_m^3   {(-\sigma_t-\gamma_1 \sigma_{xx})}_x v +R 
	\end{align}
where \begin{align*}
R=&-\frac{1}{\gamma_1} s^3\lambda^4 \int_0^T\int_{\mathcal{O}_1} \zeta (e^{-2s\vphi_m} \xi_m^3)_t  \sigma_x v
+ \frac{1}{\gamma_1}s^3\lambda^4 \int_0^T\int_{\mathcal{O}_1} (\zeta e^{-2s\vphi_m} \xi_m^3)_x  \sigma_x v_x\\
&- \frac{1}{\gamma_1}s^3\lambda^4 \int_0^T\int_{\mathcal{O}_1} (\zeta e^{-2s\vphi_m} \xi_m^3)_x  \sigma_{xx} v
\end{align*}
Again using the first equation of \eqref{CH adj}, \begin{align}\label{A_1}
	I_1 &=  \frac{1}{\gamma_1}s^3\lambda^4 \int_0^T\int_{\mathcal{O}_1} \zeta e^{-2s\vphi_m} \xi_m^3   {(\bar u(x)\sigma_x)}_x v +R \\
	\notag&\leq C\frac{1}{\gamma_1}s^3\lambda^4 \int_0^T\int_{\mathcal{O}_1} \zeta e^{-2s\vphi_m} \xi_m^3   |{\sigma_x}_x| |v| +C\frac{1}{\gamma_1}s^3\lambda^4 \int_0^T\int_{\mathcal{O}_1} \zeta e^{-2s\vphi_m} \xi_m^3   |{\sigma}_x| |v|+|R|\\
\notag	&\leq \epsilon s\lambda^2  \iintQ e^{-2s\vphi_m} \xi_m |\sigma_{xx}|^2+ C(\epsilon) s^5\lambda^6\int_0^T\int_{\mathcal{O}_1}  e^{-2s\vphi_m} \xi_m^5   |v|^2+ \epsilon s^3\lambda^4 \iintQ e^{-2s\vphi_m} \xi^3_m |\sigma_{x}|^2\\
	&+C(\epsilon) s^3\lambda^4 \iintQ e^{-2s\vphi_m} \xi^3_m |v|^2+|R|.
\end{align}
In above, we perform an integration by parts with respect to $t$. There is no boundary terms since   the weight function $e^{-2s\vphi_m}$ vanishes near $t=0$ and $T$.  

Recall from \eqref{weight_estimate-t-m}, for all $s\ge \mu_0{(e^{mCT}T^{m}+T^{2m-1}+ T^{2m})}$, we have 
$$
|\partial_t(e^{-2s\vphi_m} \xi^l_m)| \leq C s^2 \xi^2_m (e^{-2s\vphi_m} \xi^l_m),  \quad (t, x)\in Q_T,
$$
for some positive constant $C$, independent of $\lambda, s, t, T$. 

Using this estimate along with \eqref{weight-deri-x}, the Cauchy-Schwarz inequality and noting that $m>3,$ we get
\begin{align*}
	|R| &\leq \epsilon s^3\lambda^4 \iintQ e^{-2s\vphi_m} \xi^3_m |\sigma_{x}|^2+ \epsilon s\lambda^2  \iintQ e^{-2s\vphi_m} \xi_m |\sigma_{xx}|^2\\
	&+ C(\epsilon) s^7\lambda^8\int_0^T\int_{\mathcal{O}_1}  e^{-2s\vphi_m} \xi_m^7   |v|^2+ C(\epsilon) s^5\lambda^6\int_0^T\int_{\mathcal{O}_1} e^{-2s\vphi_m} \xi_m^5   |v_x|^2.
\end{align*}
Thus combining we have
\begin{align}\label{a1}
\nonumber	I_1\leq& \epsilon s\lambda^2  \iintQ e^{-2s\vphi_m} \xi_m |\sigma_{xx}|^2+  \epsilon s^3\lambda^4 \iintQ e^{-2s\vphi_m} \xi^3_m |\sigma_{x}|^2\\
	&+C(\epsilon) s^7\lambda^8 \iintQ e^{-2s\vphi_m} \xi^7_m |v|^2+C(\epsilon) s^5\lambda^6\int_0^T\int_{\mathcal{O}_1} e^{-2s\vphi_m} \xi_m^5   |v_x|^2.
\end{align}

\noindent
(ii) Next, after integrating by parts twice with respect to $x$, we get 
\begin{align}\label{A_2}
	I_2:&=  -\frac{1}{\gamma_1} s^3 \lambda^4 \int_0^T\int_{\mathcal{O}_1} \zeta e^{-2s\vphi_m} \xi_m^3 \sigma_x v_{xxxx}  \notag  \\
	&= \frac{1}{\gamma_1} s^3\lambda^4 \bigg[\int_0^T\int_{\mathcal{O}_1} \zeta_x (e^{-2s\vphi_m} \xi_m^3) \sigma_x v_{xxx} 
	+\int_0^T\int_{\mathcal{O}_1} \zeta (e^{-2s\vphi_m} \xi_m^3)_x \sigma_x v_{xxx}   \notag  \\
	& \qquad \qquad \qquad \qquad \qquad \qquad \qquad +\int_0^T\int_{\mathcal{O}_1} \zeta (e^{-2s\vphi_m} \xi_m^3) \sigma_{xx} v_{xxx}\bigg].  \end{align}
Using \eqref{weight-deri-x} and Cauchy-Schwarz inequality,    we have from \eqref{A_2}, for any $\epsilon>0$
\begin{align}\label{A_2-1}
	|I_2| \leq 2\epsilon s^3\lambda^4 \iintQ e^{-2s\vphi_m} \xi_m^{3} |\sigma_{x}|^2+\epsilon s\lambda^2 \iintQ e^{-2s\vphi_m} \xi_m |\sigma_{xx}|^2 
+	 C({\epsilon}) s^{5}\lambda^{6} \int_0^T \int_{\mathcal{O}_1}  e^{-2s\vphi_m} \xi_m^{5} |v_{xxx}|^2 .
\end{align}

\noindent
(iii) Let us estimate the third term of \eqref{Obs-psi} in the following way 
\begin{align*}
	I_3:&=  \frac{\gamma_1-\gamma_2}{\gamma_1}s^3\lambda^4\int_0^T\int_{\mathcal{O}_1} \zeta e^{-2s\vphi_m} \xi_m^3 \sigma_x v_{xx}
\end{align*}
Therefore, for $\epsilon>0$ we have, using \eqref{weight-deri-x} and the Cauchy-Schwarz inequality, that 
\begin{align}\label{A_3} 
	|I_3| \leq \epsilon s^3\lambda^4 \iintQ e^{-2s\vphi_m} \xi_m^3 |\sigma_x|^2 + 
	C({\epsilon}) s^3\lambda^4 \int_0^T \int_{\mathcal{O}_1} e^{-2s\vphi_m} \xi_m^3 |v_{xx}|^2.  
\end{align}

\noindent
(iv) Next, it is easy to see that 
\begin{align}\label{A_4}
	|I_4|+|I_5|  \leq \epsilon s^3\lambda^4 \iintQ e^{-2s\vphi_m} \xi_m^3 |\sigma_x|^2 + 
	C({\epsilon}) s^3\lambda^4 \int_0^T \int_{\mathcal{O}_1} e^{-2s\vphi_m} \xi_m^3 |v_{x}|^2\nonumber\\+ 
	C({\epsilon}) s^3\lambda^4 \int_0^T \int_{\mathcal{O}_1} e^{-2s\vphi_m} \xi_m^3 |v|^2.
\end{align}
Combining the estimates \eqref{Obs-psi}, \eqref{a1}, \eqref{A_2-1}, \eqref{A_3}, \eqref{A_4}, and applying in \eqref{Obs-psi} we get 
\begin{align}\label{estimate-aux-1}
s^3\lambda^4 \int_0^T\int_{\mathcal{O}_0} e^{-2s\vphi_m} \xi_m^3 |\sigma_x|^2 
\nonumber	&\leq
5\epsilon s^3\lambda^4 \iintQ e^{-2s\vphi_m} \xi_m^3 |\sigma_x|^2 + \epsilon s\lambda^2 \iintQ e^{-2s\vphi_m} \xi_m |\sigma_{xx}|^2  \\
\nonumber		&+C(\epsilon) s^7\lambda^8 \int_0^T\int_{\mathcal{O}_1} e^{-2s\vphi_m} \xi^7_m |v|^2+C(\epsilon) s^5\lambda^6\int_0^T\int_{\mathcal{O}_1} e^{-2s\vphi_m} \xi_m^5   |v_x|^2\\ 
		&+C({\epsilon}) s^3\lambda^4 \int_0^T \int_{\mathcal{O}_1} e^{-2s\vphi_m} \xi_m^3 |v_{xx}|^2+	 C({\epsilon}) s^{5}\lambda^{6} \int_0^T \int_{\mathcal{O}_1}  e^{-2s\vphi_m} \xi_m^{5} |v_{xxx}|^2 .
\end{align} 
Now, fix $\epsilon>0$  small enough in \eqref{estimate-aux-1}, so that all the terms with coefficient $\epsilon$ can be absorbed by the associated integrals in the left hand side of \eqref{Add_carlemans-2}. As a  consequence,  the estimate \eqref{Add_carlemans-2} boils  down to 
\begin{align}\label{Add_carlemans-3 1}
\notag	\mc{I}_{CH}(s,\lambda; v) + \mc{I}_{H}(s,\lambda; \sigma) 
	&\leq C(\epsilon) s^7\lambda^8 \int_0^T\int_{\mathcal{O}_1} e^{-2s\vphi_m} \xi^7_m |v|^2+C(\epsilon) s^5\lambda^6\int_0^T\int_{\mathcal{O}_1} e^{-2s\vphi_m} \xi_m^5   |v_x|^2\\ 
	&+C({\epsilon}) s^3\lambda^4 \int_0^T \int_{\mathcal{O}_1} e^{-2s\vphi_m} \xi_m^3 |v_{xx}|^2+	 C({\epsilon}) s^{5}\lambda^{6} \int_0^T \int_{\mathcal{O}_1}  e^{-2s\vphi_m} \xi_m^{5} |v_{xxx}|^2 ,
\end{align}
for all $\lambda\geq \lambda_0$ and $s\geq \mu_0{(e^{mCT}T^{m}+T^{2m-1}+ T^{2m})}$.

\smallskip 
\noindent 
{\bf Step 3. Absorbing the observation integral in $v_x, v_{xx}, v_{xxx}$.}
We need to estimate the last integral of \eqref{Add_carlemans-3 1}. Consider a function (recall that $\mathcal{O}_1\subset\subset \mathcal{O}_2\subset \subset \mathcal{O}$)
\begin{align}\label{smooth_func-2}
	\wphi \in C^\infty_c(\mathcal{O}_2)  \ \text{ with } \ 0\leq \wphi \leq 1, \ \text{ and } \  \wphi =1 \ \text{ in } \ \mathcal{O}_1. 
\end{align}
With this, we have 
\begin{align*}  
	s^{5}\lambda^{6} \int_0^T \int_{\mathcal{O}_1}  e^{-2s\vphi_m} \xi_m^{5} |v_{xxx}|^2 &\leq s^{5}\lambda^{6} \int_0^T \int_{\mathcal{O}_2} \wphi e^{-2s\vphi_m} \xi_m^{5} |v_{xxx}|^2 \\
	&=-s^{5}\lambda^{6} \int_0^T \int_{\mathcal{O}_2} \wphi e^{-2s\vphi_m} \xi_m^{5} v_{xx}v_{xxxx}+\frac{s^{5}\lambda^{6}}{2} \int_0^T \int_{\mathcal{O}_2} {\left(\wphi e^{-2s\vphi_m} \xi_m^{5}\right)}_{xx} v_{xx}^2\\
	&\leq \epsilon s^{-1} \iintQ e^{-2s\vphi_m} \xi_m^{-1} |v_{xxxx}|^2 + C(\epsilon) s^{11}\lambda^{12} \int_{\mathcal{O}_2} { e^{-2s\vphi_m} \xi_m^{11}} v_{xx}^2
\end{align*}

Consider a function (recall that $\mathcal{O}_2\subset\subset \mathcal{O}_3\subset \subset \mathcal{O}$)
\begin{align}\label{smooth_func-21}
	\wphi_1 \in \C^\infty_c(\mathcal{O}_3)  \ \text{ with } \ 0\leq \wphi_1 \leq 1, \ \text{ and } \  \wphi_1 =1 \ \text{ in } \ \mathcal{O}_2. 
\end{align}
With this, we have 
\begin{align*}  
	s^{11}\lambda^{12} \int_0^T \int_{\mathcal{O}_2}  e^{-2s\vphi_m} \xi_m^{11} |v_{xx}|^2 &\leq s^{11}\lambda^{12} \int_0^T \int_{\mathcal{O}_3} \wphi_1 e^{-2s\vphi_m} \xi_m^{11} |v_{xx}|^2 \\
	&=-s^{11}\lambda^{12} \int_0^T \int_{\mathcal{O}_2} \wphi e^{-2s\vphi_m} \xi_m^{11} v_{x}v_{xxx}+\frac{s^{11}
\lambda^{12}}{2} \int_0^T \int_{\mathcal{O}_2} {\left(\wphi e^{-2s\vphi_m} \xi_m^{11}\right)}_{xx} v_{x}^2\\
	&\leq \epsilon s\lambda^2 \iintQ e^{-2s\vphi_m} \xi_m |v_{xxx}|^2 + C(\epsilon) s^{21}\lambda^{22} \int_0^T\int_{\mathcal{O}_2} { e^{-2s\vphi_m} \xi_m^{21}} |v_{x}|^2.
\end{align*}
Again consider a function (recall that $\mathcal{O}_3\subset\subset \mathcal{O}_4\subset \subset \mathcal{O}$)
\begin{align}\label{smooth_func-213}
	\wphi_2 \in C^\infty_c(\mathcal{O}_4)  \ \text{ with } \ 0\leq \wphi_1 \leq 1, \ \text{ and } \  \wphi_1 =1 \ \text{ in } \ \mathcal{O}_3. 
\end{align}

Again by using the information \eqref{weight-deri-x} and Cauchy-Schwarz inequality,   we get for some $\epsilon>0$, that 
\begin{multline} \label{A_2-2}
	s^{21}\lambda^{22} \int_{\mathcal{O}_2} { e^{-2s\vphi_m} \xi_m^{21}} v_{x}^2    
	\leq \epsilon s^3\lambda^4 \iintQ e^{-2s\vphi_m} \xi^3_m |v_{xx}|^2 + C(\epsilon) s^{39}\lambda^{40}\int_0^T \int_{\mathcal{O}} { e^{-2s\vphi_m} \xi_m^{39}} v^2.
\end{multline}
We fix small enough $\epsilon>0$ in \eqref{A_2-2} so that the integrals with coefficient $\epsilon$ can be absorbed by the leading integrals in the left hand of \eqref{Add_carlemans-3 1} and as a result we obtain 
\begin{align*}
	\mc{I}_{CH}(s,\lambda; v) + \mc{I}_{H}(s,\lambda; \sigma) 
	\leq C 
	 s^{39}\lambda^{40} \int_0^T\int_{\mathcal{O}} { e^{-2s\vphi_m} \xi_m^{39}} v^2
\end{align*}
for all $\lambda\geq \lambda_0$ and $s\geq \mu_0{(e^{mCT}T^{m}+T^{2m-1}+ T^{2m})}$.

This is the required joint Carleman estimate \eqref{carleman-joint} of our theorem. The proof is finished by using standard density argument of $C^{\infty}_{c}(0,1)$ in $L^2(0,1)$. 
\end{proof}

\section{Controllability of the linear system}\label{sec-car1}
{
This section is devoted to the proof of null controllability of the linearized model \eqref{CH}--\eqref{bd}--\eqref{in}, that is \Cref{Thm-linear}. We know that the null controllability of \eqref{CH}--\eqref{bd}--\eqref{in} is equivalent to an observability inequality for the solutions of the adjoint system \eqref{CH adj}. In the sequel, we first prove the observability inequality using Carleman inequality obtained in Section \ref{Sec-carleman-1}. The observability inequality indeed gives the null controllability of the system \eqref{CH}--\eqref{bd}--\eqref{in} at any time $T>0$ with a certain control cost. To obtain the specific control cost as mentioned in \Cref{Thm-linear}, next we split the proof of the controllability result in to two cases- one case is for $0<T<1$ and the other case is for $T\ge 1$ and finally conclude the section with the proof of \Cref{Thm-linear}.  
}

\subsection{Observability inequality}\label{sec-obs}

To prove the observability inequality, we require the following lemma which can be obtained by using energy estimate.

 \begin{lemma}\label{energy est1}
		Let $(\sigma_T, v_T)\in (L^2(0,1))^2.$ Then $(\sigma, v)$, the solution of \eqref{CH adj} satisfies 
		\begin{equation}\label{aux}
			\|\sigma(0,\cdot)\|^2_{L^2(0,1)} + 	\|v(0,\cdot)\|^2_{L^2(0,1)}\leq
			\frac{C}{T} e^{CT}  \int_{\frac{T}{4}}^{\frac{3T}{4}}\int_{0}^{1} (|\sigma(t,x)|^2+|v(t,x)|^2) dx dt,
		\end{equation}
		for some positive constant $C$, independent of $T$. 
	\end{lemma}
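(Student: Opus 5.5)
The plan is to prove a backward-in-time energy estimate for the adjoint system \eqref{CH adj}: control the $L^2$ norm at $t=0$ by the $L^2$ norm at an intermediate time $t\in[T/4,3T/4]$, and then average in $t$. By the density argument used in \Cref{Thm_Carleman_main}, we may assume $\sigma_T,v_T\in C^\infty_c(0,1)$, so all integrations by parts below are justified.

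\textbf{Step 1 (estimate for $\sigma$).} Multiply $\eqref{CH adj}_1$ by $\sigma$ and integrate over $(0,1)$. The Dirichlet conditions kill the boundary terms, and $\int_0^1 \bar u\sigma_x\sigma=-\frac12\int_0^1\bar u'\sigma^2$. This gives
\begin{equation*}
-\frac{1}{2}\frac{d}{dt}\|\sigma(t)\|^2_{L^2}+\gamma\|\sigma_x(t)\|^2_{L^2}=\frac{1}{2}\int_0^1\bar u'\sigma^2\le C\|\sigma(t)\|^2_{L^2}.
\end{equation*}
Since the $\gamma\|\sigma_x\|^2$ term has a favourable sign, Gronwall applied backward in time yields $\|\sigma(0)\|^2_{L^2}\le e^{Ct}\|\sigma(t)\|^2_{L^2}$ for every $t\in[0,T]$. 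The same integrated identity also gives $\gamma\int_0^t\|\sigma_x\|^2_{L^2}\le Ce^{Ct}\|\sigma(t)\|^2_{L^2}$, which is needed for the coupling term in Step 2.

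\textbf{Step 2 (estimate for $v$).} Multiply $\eqref{CH adj}_2$ by $v$ and integrate over $(0,1)$. The Neumann conditions $v_x=v_{xxx}=0$ remove all boundary terms and give $\int_0^1 v_{xxxx}v=\|v_{xx}\|^2_{L^2}$ and $\gamma_2\int_0^1 v_{xx}v=-\gamma_2\|v_x\|^2_{L^2}$. The transport terms satisfy $\int_0^1(\bar u v_x+\bar u'v)v=\frac12\int_0^1\bar u'v^2$. The main point is to avoid differentiating $\sigma$: I would integrate the coupling term by parts, $\gamma_1\int_0^1\sigma_xv=-\gamma_1\int_0^1\sigma v_x$ (no boundary term, since $\sigma=0$ at $x=0,1$). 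Bounding $\|v_x\|^2_{L^2}$ by \eqref{ineq1} and absorbing $\epsilon\|v_{xx}\|^2_{L^2}$ into the dissipation, I expect
\begin{equation*}
-\frac{d}{dt}\|v(t)\|^2_{L^2}+\|v_{xx}(t)\|^2_{L^2}\le C\left(\|v(t)\|^2_{L^2}+\|\sigma(t)\|^2_{L^2}\right).
\end{equation*}
Combining with Step 1, $E(t)=\|\sigma(t)\|^2_{L^2}+\|v(t)\|^2_{L^2}$ satisfies $-E'(t)\le CE(t)$. Hence $E(0)\le e^{Ct}E(t)\le e^{CT}E(t)$ for all $t\in[0,T]$. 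Strictly, only $\|\sigma\|^2_{L^2}$ is needed on the right-hand side, and Step 1 already controls it.

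\textbf{Step 3 (averaging).} Integrate the inequality $E(0)\le e^{CT}E(t)$ over $t\in[T/4,3T/4]$, an interval of length $T/2$. This gives $\frac{T}{2}E(0)\le e^{CT}\int_{T/4}^{3T/4}E(t)\,dt$, which is \eqref{aux} with constant $\frac{2}{T}e^{CT}$. The constant $C$ depends only on $\|\bar u\|_{C^1}$, $\gamma$, $\gamma_1$, $\gamma_2$, not on $T$. Density then extends \eqref{aux} to all $(\sigma_T,v_T)\in(L^2(0,1))^2$ by continuity of the solution map \eqref{adj_est}.

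\textbf{Main obstacle.} The only delicate point is the coupling term $\gamma_1\sigma_x$. If it is estimated directly as $|\gamma_1\int_0^1\sigma_xv|\le\|\sigma_x\|^2_{L^2}+C\|v\|^2_{L^2}$, one has to check that the $\gamma\|\sigma_x\|^2$ dissipation from Step 1 absorbs it, which requires weighting the two energies (e.g. $E=K\|\sigma\|^2_{L^2}+\|v\|^2_{L^2}$ with $K$ large). The integration by parts in Step 2 removes this issue entirely, which is why I would try it first.
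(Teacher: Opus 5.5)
Your proof is correct and follows essentially the same route as the paper: an $L^2$ energy identity for $\|\sigma\|^2_{L^2}+\|v\|^2_{L^2}$, backward Gronwall giving $E(0)\le e^{CT}E(t)$, averaging over $[T/4,3T/4]$, and density from regular final data. The only difference is cosmetic. You integrate the coupling term by parts to $-\gamma_1\int_0^1\sigma v_x$ and absorb $\|v_x\|^2_{L^2}$ into $\|v_{xx}\|^2_{L^2}$, whereas the paper applies Cauchy--Schwarz directly and absorbs $\|\sigma_x\|^2_{L^2}$ into $\gamma\|\sigma_x\|^2_{L^2}$; contrary to your closing remark, that route needs no weighting of the energies, only Young's inequality with a small parameter.
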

	
\begin{proof}

First, let us consider $(\sigma_T, v_T)\in H^1_0(0, 1)\times H^2_1$. Then solution of \eqref{CH adj} lies in the space $C([0,T], H^1_0(0, 1)\times H^2_1)\cap
\left(L^2(0, T; H^2(0,1)\cap H^1_0(0,1))\times L^2(0,T;H^4(0,1)\cap H^2_1(0,1))\right).$ Multiplying the equations \eqref{CH adj} by $\sigma$ and $v$ respectively and adding them using integration by parts, we have 
	\begin{align*}
-\frac{1}{2}\frac{d}{dt}\int_{0}^{1}(\sigma^2(t,x)+v^2(t,x)) dx =&-\gamma\int_{0}^{1}\sigma_x^2+\int_{0}^{1}\bar{u}(x)\sigma_x\sigma-\int_{0}^{1} v_{xx}^2-\gamma_2\int_{0}^{1}v_{xx}v\\
&+\int_{0}^{1}\bar{u}(x)v_x v + \int_{0}^{1}\bar{u}'(x)v^2-\gamma_1\int_{0}^{1}\sigma_x v.
	\end{align*} 
Now, using Cauchy-Schwarz inequality and then ignoring all the negative term we finally obtain:
\begin{align*}
	-\frac{1}{2}\frac{d}{dt}\int_{0}^{1}\left(|\sigma(t,x)|^2+ |v(t,x)|^2\right) dx \leq C\left(\int_{0}^{1}|\sigma(t,x)|^2 \, dx+\int_{0}^{1} |v(t,x)|^2\, dx\right), \quad \forall\, t\in (0, T),
\end{align*}
for some positive constant $C$, independent of $t$. 
This inequality can be written as
\begin{align*}
	-\frac{d}{dt}\bigg[e^{2Ct}\int_{0}^{1}\left(|\sigma(t,x)|^2 + |v(t,x)|^2\right) dx\bigg] \leq 0, \quad \forall\, t\in (0, T).
\end{align*}
Integrating both sides over $[0,t]$ we have
 \begin{align*}
 	\int_{0}^{1}\left(|\sigma(0,x)|^2+ |v(0,x)|^2\right)\, dx\leq e^{2Ct}\left(\int_{0}^{1}|\sigma(t,x)|^2\, dx+ \int_{0}^{1} |v(t,x)|^2\, dx\right) .
 \end{align*}
Integrating with respect to $t$ over $\left(\frac{T}{4},\frac{3T}{4}\right)$ we get a positive constant $C>0$, independent of $T$, such that:
\begin{align*}
	\int_{0}^{1}|\sigma(0,x)|^2+ |v(0,x)|^2 \, dx \leq \frac{C}{T}e^{CT}\int_{\frac{T}{4}}^{\frac{3T}{4}}\int_{0}^{1}\left(|\sigma(t,x)|^2+ |v(t,x)|^2 \right)\, dx\, dt.
\end{align*}
Using density argument, we get the above inequality when $(\sigma_T, v_T)\in (L^2(0,1))^2.$

\end{proof}

By adapting classical arguments (see, for instance, \cite[Theorem 2.44, Chapter 2]{Co07}), it can be shown that the following observability inequality \eqref{obser-1} is equivalent to the null controllability of \eqref{CH}--\eqref{bd}--\eqref{in}.

\begin{proposition}[\textbf{Observability inequality}]\label{pr ob}
	Let $T>0$ any given time and $ m\in \mathbb{N}$ with $m>3$. There exists a constant $M>0$ depending on $\mathcal{O}$, $\gamma$, $\gamma_1$, $\gamma_2,$ but independent of $T$ such that for all $(\sigma_T, v_T)\in (L^2(0,1))^2$, $(\sigma, v)$, the solution to the adjoint system \eqref{CH adj}, satisfies
	the observability inequality:
	\begin{align}\label{obser-1}
		\|\sigma(0,\cdot)\|^2_{L^2(0,1)} + 	\|v(0,\cdot)\|^2_{L^2(0,1)}\leq
		M {e^{M(T+\frac{1}{T}+\frac{e^{mMT}}{T^m})}}  \int_0^T\int_{\mathcal{O}} |v(t, x)|^2\, dx\, dt.
	\end{align}
\end{proposition}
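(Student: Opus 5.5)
The plan is to combine the joint Carleman estimate of \Cref{Thm_Carleman_main} with the energy estimate of \Cref{energy est1}. The Carleman weights are compared with constants on the middle interval $(T/4,3T/4)$ from below and on $(0,T)\times\mathcal{O}$ from above. We fix $\lambda=\lambda_0$ and take $s=s_0=\mu_0(e^{mCT}T^m+T^{2m-1}+T^{2m})$, the smallest admissible value, so that every constant is tracked explicitly in $T$.

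\textbf{Step 1 (lower bound of the left side).} For $t\in[T/4,3T/4]$ we have $t(T-t)\ge 3T^2/16$. Hence $\xi_m\ge c\,T^{-2m}$ and $\vphi_m\le C T^{-2m}$, with constants depending only on $\lambda_0$, $k$, $m$, $\|\nu\|_\infty$. Keeping only the $v$-term and the $\sigma_x$-term of \eqref{carleman-joint} on this interval gives
\begin{align*}
s^7\lambda^8 \iintQ e^{-2s\vphi_m}\xi_m^7|v|^2+s^3\lambda^4\iintQ e^{-2s\vphi_m}\xi_m^3|\sigma_x|^2 \ge c\, e^{-C s T^{-2m}}\, s^3 T^{-6m}\int_{T/4}^{3T/4}\!\!\int_0^1\bigl(|v|^2+|\sigma_x|^2\bigr).
\end{align*}
Poincar\'e's inequality ($\sigma(t,0)=\sigma(t,1)=0$) then replaces $|\sigma_x|^2$ by $|\sigma|^2$.

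\textbf{Step 2 (upper bound of the observation term).} The difference $\vphi_m$ is bounded below by $c_0/(t^m(T-t)^m)$ with $c_0=e^{\frac{m+1}{m}\lambda k\|\nu\|_\infty}-e^{\lambda(k+1)\|\nu\|_\infty}>0$, which holds because $k>m$. Therefore
\begin{align*}
s^{39}\xi_m^{39}e^{-2s\vphi_m}\le C\,\sup_{\tau>0}\bigl(s\tau\bigr)^{39}e^{-c s\tau}\le C,\qquad \tau=\frac{1}{t^m(T-t)^m}.
\end{align*}
The right side of \eqref{carleman-joint} is then at most $C\iint_{(0,T)\times\mathcal{O}}|v|^2$.

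\textbf{Step 3 (combination).} Steps 1 and 2, inserted into \Cref{energy est1}, yield
\begin{align*}
\|\sigma(0)\|^2+\|v(0)\|^2\le \frac{C}{T}e^{CT}\, s^{-3}T^{6m}\, e^{C s T^{-2m}}\iint_{(0,T)\times\mathcal{O}}|v|^2 .
\end{align*}
With $s=s_0$ we compute
\begin{align*}
sT^{-2m}=\mu_0\Bigl(\frac{e^{mCT}}{T^m}+\frac1T+1\Bigr).
\end{align*}
The polynomial factors $T^{6m}s^{-3}$ and $T^{-1}$ are absorbed into $Me^{M(T+1/T)}$. This produces exactly $Me^{M(T+\frac1T+\frac{e^{mMT}}{T^m})}$, after enlarging $M$ so that it also dominates the constant $C$ in the exponent $e^{mCT}$.

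The main obstacle is the bookkeeping in Step 3. One must check that every power of $T$ and $s$ can be absorbed into the exponential uniformly for both small and large $T$, and that the constant in $e^{mCT}$ coming from $s_0$ may be renamed $M$ consistently. The positivity of $c_0$ in Step 2, which relies on $k>m$, also needs care.
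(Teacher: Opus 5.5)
Your proposal is correct and follows essentially the same route as the paper. As there, you fix $\lambda=\lambda_0$ and $s=s_0$, bound the Carleman left side from below on $(T/4,3T/4)$ using $\xi_m\ge cT^{-2m}$ and $\vphi_m\le CT^{-2m}$, then apply Poincar\'e and \Cref{energy est1}, and bound $s_0^{39}\xi_m^{39}e^{-2s_0\vphi_m}$ by a constant via the positivity of $\min_x\bigl[e^{\frac{m+1}{m}\lambda_0k\|\nu\|_\infty}-e^{\lambda_0(k\|\nu\|_\infty+\nu(x))}\bigr]$ (your $c_0$, positive since $k>m$); the bookkeeping you leave implicit, namely replacing $s^7T^{-14m}$ by $s^3T^{-6m}$ and absorbing $T^{6m}s^{-3}$, is handled by $s_0\ge\mu_0T^{2m}$, exactly as in the paper.
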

\begin{proof}
Let $(\sigma_T, v_T)\in L^2(0,1)\times L^2(0,1)$ and $(\sigma, v)$ be the solution of the adjoint system \eqref{CH adj}. Let us fix $s=s_0, \lambda=\lambda_0$, where $s_0$ and $\lambda_0$ are given in Theorem \ref{Thm_Carleman_main} and the weight functions $\vphi_m, \xi_m$ are as given in \eqref{weight_function} corresponding to $\lambda=\lambda_0$. 

First, note that for some positive constant $C_1$ independent of $T$, we have 
\begin{align}\label{eqobsest1}
0<\vphi_m(t, x)<  \frac{C_1}{2T^{2m}}, \quad \text { in } \left(\frac{T}{4}, \frac{3T}{4}\right)\times (0, 1), \nonumber \\  
e^{-2s_0\vphi_m(t,x)} \geq e^{\frac{-C_1 s_0}{T^{2m}}}\geq e^{-C_1\mu_0{(1+\frac{1}{T}+\frac{e^{mCT}}{T^m})}}, \text { in } \left(\frac{T}{4}, \frac{3T}{4}\right)\times (0, 1), 
\end{align}
since $s_0=\mu_0{(e^{mCT}T^{m}+T^{2m-1}+T^{2m})}$ for some $\mu_0>0$. 
Using the fact that $\xi_m(t, x)\geq \frac{1}{T^{2m}}$ in $Q_T$ along with \eqref{eqobsest1}, for any $s\ge s_0, \lambda\geq \lambda_0$,  we get 
\begin{align}\label{carle-joint1}
	\nonumber&	s_0^3\lambda_0^4 \iintQ e^{-2s_0\vphi_m} \xi^3_m |\sigma_x|^2+ 
	s_0^7\lambda_0^8 \iintQ e^{-2s_0\vphi_m} \xi_m^7 |v|^2 \\ \nonumber & \quad\geq   s_0^3\lambda_0^4 \int_{\frac{T}{4}}^{\frac{3T}{4}}\int_{0}^{1} e^{-2s_0\vphi_m} \xi^3_m |\sigma_x|^2+ 
	s_0^7\lambda_0^8 \int_{\frac{T}{4}}^{\frac{3T}{4}}\int_{0}^{1} e^{-2s_0\vphi_m} \xi_m^7 |v|^2\\
\nonumber	&	\quad \geq e^{-C_1\mu_0{(1+\frac{1}{T}+\frac{e^{mCT}}{T^m})}}\left(\frac{s_0^3\lambda_0^4}{T^{6m}} \int_{\frac{T}{4}}^{\frac{3T}{4}} \int_{0}^{1} |\sigma_x|^2+ 
	\frac{s_0^7\lambda_0^8}{T^{14m}} \int_{\frac{T}{4}}^{\frac{3T}{4}}\int_{0}^{1}   |v|^2\right)\\
 &\quad \geq Ce^{-{C\left(\frac{1}{T}+\frac{e^{mCT}}{T^m}\right)}}\left(\int_{\frac{T}{4}}^{\frac{3T}{4}} \int_{0}^{1} |\sigma_x|^2+\int_{\frac{T}{4}}^{\frac{3T}{4}}\int_{0}^{1}   |v|^2\right),
\end{align}
for some positive constant $C$ independent of $T$, since $s_0\ge \mu_0 T^{2m}$. 
Now using Poincar\'e inequality for the first term of the right hand side of \eqref{carle-joint1} since $\sigma\in L^2(0, T; H^1_0(0, 1))$, from above we obtain 
\begin{align}\label{carle-joint2}
	s_0^3\lambda_0^4 \iintQ e^{-2s_0\vphi_m} \xi^3_m |\sigma_x|^2+ &
	s_0^7\lambda_0^8 \iintQ e^{-2s_0\vphi_m} \xi_m^7 |v|^2 &\geq Ce^{-{C\left(\frac{1}{T}+\frac{e^{mCT}}{T^m}\right)}}\int_{\frac{T}{4}}^{\frac{3T}{4}}\int_{0}^{1} (|\sigma(t,x)|^2+|v(t,x)|^2) dx dt,
\end{align}
for some positive constant $C$ independent of $T$. Next, combining \eqref{aux} and \eqref{carle-joint2}, for a a generic constant $C>0$ independent of $T$, we get 
\begin{equation}\label{aux1}
	\|\sigma(0,\cdot)\|^2_{L^2(0,1)} + 	\|v(0,\cdot)\|^2_{L^2(0,1)}\leq
	\frac{C}{T} {e^{C(T+\frac{1}{T}+\frac{e^{mCT}}{T^m})}}\left(s_0^3\lambda_0^4 \iintQ e^{-2s_0\vphi_m} \xi^3_m |\sigma_x|^2+ 
	s_0^7\lambda_0^8 \iintQ e^{-2s_0\vphi_m} \xi_m^7 |v|^2\right), 
\end{equation}
and finally Carleman estimate \eqref{carleman-joint} along with \eqref{aux1} gives 
\begin{equation}\label{aux2}
	\|\sigma(0,\cdot)\|^2_{L^2(0,1)} + 	\|v(0,\cdot)\|^2_{L^2(0,1)}\leq
	\frac{C}{T} {e^{C(T+\frac{1}{T}+\frac{e^{mCT}}{T^m})}} s_0^{39}\lambda_0^{40} \iint_{(0,T)\times\mathcal{O}} e^{-2s_0\vphi_m} \xi_m^{39} |v|^2.
\end{equation}
Now, noting $C_2:=\min_{x\in[0,1]} \big[e^{\frac{m+1}{m}\lambda_0 k \|\nu\|_{\infty}   }-  e^{\lambda_0\big( k\|\nu\|_{\infty} + \nu(x) \big)}\big]>0$, we get 
$$
s^{39}_0e^{-2s_0\vphi_m(t, x)} \xi_m^{39}(t, x)\le C \frac{s_0^{39}}{t^{39m}(T-t)^{39m}}e^{-\frac{2s_0C_2}{t^m(T-t)^m}}\le C \text { in } Q_T, (\text{ using } x^{39} e^{-x}<{39!} ),
$$
for some generic positive constant $C$, independent of $T$. Using this and again noting $\frac{1}{T}\le e^{\frac{1}{T}}$, from \eqref{aux2}, it follows
	 \begin{equation*}
	 	\|\sigma(0,\cdot)\|^2_{L^2(0,1)} + 	\|v(0,\cdot)\|^2_{L^2(0,1)}\leq M e^{M(T +\frac{1}{T}+\frac{e^{mMT}}{T^m})}\int_0^T\int_{\mathcal{O}} |v|^2\, dx\, dt,
	 	\end{equation*}
for some positive constant $M$ independent of $T$ and hence \Cref{pr ob} is proved. 
\end{proof}

{
\begin{remark}
	Note that, the observability inequality \eqref{obser-1} gives the null controllability of \eqref{CH}--\eqref{in} in $(L^2(0, 1))^2$ at any time $T>0$ using a control $h$ satisfying 
	\begin{equation*}
	\|h\|_{L^2(0, T; L^2(\mathcal{O}))} \leq M e^{M(T +\frac{1}{T}+\frac{e^{mMT}}{T^m})} \|(w_0,\psi_0)\|_{(L^2(0,1))^2}.
	\end{equation*}
However, from above, the null controllability of \eqref{CH}--\eqref{in} can be shown with a control $h$ satisfying \eqref{control_estimate} as in \Cref{Thm-linear}
and that is explained in the next sub-section. The estimate of the control given in \eqref{control_estimate} is more tractable to do the analysis used in the later part of this article. 
\end{remark}
}

\subsection{Proof of \Cref{Thm-linear}}\label{sec-proof}
{
First note that from \Cref{pr ob}, it follows that for any $(w_0, \psi_0)\in (L^2(0, 1))^2$ and any $T>0$, the linear system \eqref{CH}--\eqref{in} is null controllable in $(L^2(0, 1))^2$ at time $T$ with a control $h\in L^2(0, T; L^2(\mathcal{O}))$ satisfying 
\begin{equation}\label{eqcontrol-cost1}
\|h\|_{L^2(0, T; L^2(\mathcal{O}))} \leq  M e^{M(T +\frac{1}{T}+\frac{e^{mMT}}{T^m})} \|(w_0,\psi_0)\|_{(L^2(0,1))^2},
\end{equation}
for some  positive constants $M$ and $m>3$, independent of $T$.

To obtain the cost of control as mentioned in \Cref{Thm-linear}, we split the proof of \Cref{Thm-linear} in to two cases below. 

\noindent 
{\bf{Case 1.}} Let $0<T<1$. Then for any given $m>3$, note that $\frac{1}{T}<\frac{1}{T^m}$ and $e^{mMT}\le e^{mM}$. Using this, from \eqref{eqcontrol-cost1} it easily follows that 
\begin{equation}\label{eqcontrol-cost2}
\|h\|_{L^2(0, T; L^2(\mathcal{O}))} \leq \tilde M e^{\tilde M(T +\frac{1}{T^m})} \|(w_0,\psi_0)\|_{(L^2(0,1))^2},
\end{equation} for some $\tilde M>0$ independent of $T$. Hence \Cref{Thm-linear} is proved for $0<T<1$. 

\noindent
{\bf{Case 2.}} Let $T\ge 1$. Let us consider a fix $T_0\in (0, 1)$. From Case 1, it follows that for any $(w_0, \psi_0)\in (L^2(0, 1))^2$, there exists a control $h\in L^2(0, T_0; L^2(\mathcal{O}))$ satisfying \eqref{eqcontrol-cost2} such that the linear system \eqref{CH}--\eqref{in} admits a unique solution $(w, \psi)\in C([0, T_0]; (L^2(0, 1))^2)$ obeying 
$ (w, \psi)(T_0, \cdot)= (0, 0)$ in $(L^2(0, 1))^2$. Setting the control 
\begin{equation}\label{rev_con_new}
	\widetilde h=\begin{cases} h \text{ in } (0,T_0)\times \mathcal{O},\\ 0 \text{ in } (T_0,T)\times \mathcal{O},
	\end{cases}
\end{equation} 
where $h$ is the control  $L^2(0, T_0; L^2(\mathcal{O}))$ mentioned above, we consider \eqref{CH}--\eqref{in} with the control $\widetilde{h}\in L^2(0, T; L^2(\mathcal{O}))$. Note that the well-posedness of the system gives the existence of the unique solution of \eqref{CH}--\eqref{in} in $C([0, T]; (L^2(0,1))^2)$ and the solution is denoted by $(w, \psi)$. Due to the first part, we already have $(w, \psi)(T_0, \cdot)=(0, 0)$ in $(L^2(0, 1))^2$. Since on $[T_0, T]$, the linear system \eqref{CH}--\eqref{bd} is with control $\widetilde{h}=0$ on $(T_0, T)\times \mathcal{O}$ and with initial condition $(w, \psi)(\cdot, T_0)=(0, 0)$,  the solution on $[T_0, T]$ is $(w, \psi)(\cdot, t)=(0, 0)$ for all $t\in [T_0, T]$. Hence the control $\widetilde{h}\in L^2(0, T; L^2(\mathcal{O}))$ gives the null controllability of 
\eqref{CH}--\eqref{in} in $(L^2(0, 1))^2$ at $T$. Moreover, due to $e^{\tilde M(T +\frac{1}{T^m})}\ge 1$ along with \eqref{eqcontrol-cost2}, the control $\widetilde{h}$ satisfies 
\begin{align*}\|\tilde h\|_{L^2(0, T; L^2(\mathcal{O}))}= \| h\|_{L^2(0, T_0; L^2(\mathcal{O}))}&\leq \tilde M e^{\tilde M(T_0 +\frac{1}{T_0^m})} \|(w_0,\psi_0)\|_{(L^2(0,1))^2}\\
	&\leq \tilde M e^{\tilde M(T_0 +\frac{1}{T_0^m})}e^{\tilde M(T +\frac{1}{T^m})}\|(w_0,\psi_0)\|_{(L^2(0,1))^2}\\
	&\leq  M_1 e^{\tilde M(T +\frac{1}{T^m})}\|(w_0,\psi_0)\|_{(L^2(0,1))^2},\end{align*} 
where $ M_1= \tilde Me^{ \tilde M(T_0 +\frac{1}{T_0^m})}$ is a fixed positive constant independent of $T$. 
Hence \Cref{Thm-linear} is proved. 
}

\section{Null controllability of the linearized model with nonhomogeneous source term}\label{Sec:control with source}
In this section, using the controllability and the control cost in \eqref{control_estimate} for the linear system without forcing term, we shall prove the null controllability of the linearized model with nonhomogeneous source provided the source lies in some suitable weighted spaces. 
The proof will be based on  the  so-called source term method developed in \cite{Tucsnak-nonlinear}.
{It is worth mentioning that in \cite{Tucsnak-nonlinear}, the authors demonstrate the source term method with an associated control cost of the form $ Ce^{C/T} $. In contrast, the work \cite{TTW23} addresses a different type of control cost, more precisely $ Ce^{C/T^l}$ with $l\geq 11$ (see Lemma 4.1 and Section 4.3 in \cite{TTW23} for details), which we adopt in this work to obtain our result.
}

Recall that in \Cref{Thm-linear}, we have the null controllability of \eqref{CH} in $(L^2(0, 1))^2$ at any time $T>0$ with the control cost $ {Me^{M T}e^{\frac{M}{T^m}}}$ given in \eqref{control_estimate} for some $M>0$ independent of $T$. In view of this, we introduce the following weighted spaces to handle the system with forcing terms.

Let us first assume the constants  $p>0$, $q>1$  in such a way that 
\begin{align}\label{choice-p_q}
	1<q<{{2}^{\frac{1}{2m}}}, \ \ \text{and} \ \ p> \frac{{q^{2m}}}{2-{q^{2m}}}.
\end{align}
Let $M$ be the positive constant as obtained in the control cost \eqref{control_estimate}. Let us define the functions 

\begin{equation}\label{def_weight_func}
	\rho_0(t)=\left\{\begin{array}{ll} e^{-\frac{pM}{{(q-1)^m(T-t)^m}}} & t \in [0, T), \\ 0 & t=T, \end{array}\right.   \quad 
\rho_{\F}(t)= \left\{\begin{array}{ll} e^{-\frac{(1+p){{q^{2m}}} M}{{(q-1)^m(T-t)^m}}} & t \in [0, T), \\ 0 & t=T. \end{array}\right.
\end{equation}

Note that the functions $\rho_0$ and $\rho_{\F}$ are continuous and non-increasing in $[0,T]$. 
\begin{remark}\label{Remark-weights}
	We compute that
	\begin{align*}
		\frac{\rho_0^2(t)}{\rho_\F(t)}= {e^{\frac{q^{2m} M + pM(q^{2m}-2)}{(q-1)^m(T-t)^m}}}, \quad \forall t \in \left[0, T\right), \quad \mathrm{with}\quad 
		\frac{\rho_0^2(T)}{\rho_\F(T)}=0.
	\end{align*} 
	Due to the choices of $p,q$ in \eqref{choice-p_q}, we have ${M\big(q^{2m}+ p(q^{2m}-2)\big)}<0$, $(q-1)>0$ and therefore we can conclude that
	\begin{align*}
		\frac{\rho^2_0(t)}{\rho_{\F}(t)} \leq 1, \quad \forall t \in [0,T]. 
	\end{align*}
\end{remark}

We define the following weighted spaces: 
\begin{subequations}
	\begin{align}
		\label{space_F}
		&\F:= \left\{f\in L^1(0,T; L^2(0,1)) \ \Big| \ \frac{f}{\rho_\F} \in L^1(0,T; L^2(0,1))  \right\}, \\
		\label{space_Y} 
	&	\Y := \left\{\begin{array}{l} (u,\phi)\in C([0, T]; (L^2(0,1))^2) \Big| \frac{u}{\rho_0} \in C([0,T]; L^2(0,1)) \cap L^2(0,T; H^1_0(0,1)), \\
	    \hspace{2cm} \frac{\phi}{\rho_0}\in C([0,T]; L^2(0,1)) \cap L^2(0,T; H^2_1) \end{array}\right\}, \\
		\label{space_V}
	&	\V:= \left\{ h\in L^2((0,T) \times \mathcal{O})  \ \Big| \ \frac{h}{\rho_0}\in L^2((0,T) \times \mathcal{O})  \right\}.
	\end{align}
\end{subequations}
Let us define the following
 norms for the above weighted spaces by
 \begin{align*}
 	\|f\|_{\F} := \| \rho^{-1}_\F f\|_{ L^2(0, T; L^2(0,1))}, \quad 
 	\|h\|_{\V} := \|\rho^{-1}_0 h\|_{L^2(0, T; L^2(\mathcal{O})}, \\
 	\|(u, \phi)\|_{\Y}:=\|\rho^{-1}_0 u\|_{C([0,T]; L^2(0,1)) \cap L^2(0,T; H^1_0(0,1))}+ \|\rho^{-1}_0 \phi\|_{C([0,T]; L^2(0,1)) \cap L^2(0,T; H^2_1)}.
 \end{align*}

Consider the following system with source terms $(f_1,f_2)$ in the right hand side of the systems \eqref{CH}:
 \begin{equation}\label{System-source}
 	\begin{cases}
 		w_t-\gamma w_{xx}+\bar u(x) w_x+\bar u'(x) w=\gamma_1 \psi_x+f_1, & \text{in } Q_T, \\
 		\psi_t+ \psi_{xxxx}+\gamma_2 \psi_{xx} + \bar u(x) \psi_x = f_2+\chi_{\mathcal{O}} h, &\text{in } Q_T\\
 		w(t, 0)=0,  \  w(t, 1)=0,  & t \in (0, T), \\
 		\psi_x(t,0)=0, \  \psi_x(t,1)=0, & t \in (0, T),    \\
 		\psi_{xxx}(t, 0)=0,  \  \psi_{xxx}(t, 1)=0, & t \in (0, T),\\
 		w(0, x)=w_0(x),\, \psi(0,x)=\psi_0(x), & x \in  (0, 1).
 	\end{cases}
 \end{equation}
Our next theorem is for the null controllability result of the inhomogeneous linear
system provided that the forcing terms $f_1$ and $f_2$ belong to the space $\mc{S}$ with the weight $\rho_{\mc S}^{-1}$. Thanks to the weight functions, the forcing terms vanish at $t=T$ with the decay $\rho_{\mc S}$. Our weight functions are associated with this control cost.  Utilizing all these, we show that the solution of the inhomogeneous system \eqref{System-source} belongs to the weighted space defined above with the weight $\rho_0$ having a singularity at $t = T$, and hence the solution has to be zero at
$t = T$. To do it, we write the time interval $(0, T )$ as $\cup_{k\geq0}(T_{k} , T_{k+1})$, for $T_k$’s defined
suitably depending on $q$ which is introduced in the weight functions. Then, on each $(T_k , T_{k+1})$, we split the inhomogeneous system \eqref{System-source} into a system with the forcing term and zero initial condition and a control system. Finally, using some properties of the weight functions and their values at $T_k$ for all $k \geq0$, we obtain our required
estimates.

Next, by the following proposition we shall obtain the existence of a control $h\in \V$  for the system \eqref{System-source} with  given source term $f_1, f_2\in \F$ and initial data $(w_0,\psi_0) \in L^2(0,1)\times L^2(0,1)$.

Let us  prove the following result. 
\begin{proposition}\label{Proposition-weighted}
	Let $T>0$.  For any given $f_1,f_2\in \F$ and for any $(w_0,\psi_0) \in (L^2(0,1))^2$, there exists a control $h\in \V$ such that \eqref{System-source} admits a unique solution 
	$(w,\psi)\in \Y$ satisfying 
\begin{equation}\label{eqnullcond} 
	w(T)=0=\psi(T) \quad \mathrm{in}\quad L^2(0, 1).
	\end{equation}
Further, the solution and the control satisfy
	\begin{multline}\label{estimate-weighted}
		\left\|\frac{w}{\rho_0} \right\|_{C([0,T]; L^2(0,1)) \cap L^2(0,T; H^1_0(0,1))} + 	\left\|\frac{\psi}{\rho_0} \right\|_{C([0,T]; L^2(0,1)) \cap L^2(0,T; H^2_1)} + \left\|\frac{h}{\rho_0} \right\|_{L^2((0,T)\times \mathcal{O})}\\
		\leq  C{e^{C(T+\frac{1}{T^m})} } \left(\|(w_0,\psi_0)\|_{L^2(0,1)\times L^2(0,1)} + \left\|{\left(\frac{f_1}{\rho_\F},\frac{f_2}{\rho_\F}\right)}\right\|_{L^1((0,T;L^2(0,1)\times L^2(0,1))} \right),
	\end{multline}
 where the constant $C>0$ does not depend on $w_0,\psi_0$, $f_1,f_2$, $h$, $T$. 
 
\noindent

\end{proposition}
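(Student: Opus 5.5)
We follow the source term method of \cite{Tucsnak-nonlinear}, adapted to the control cost $Me^{M(T+1/T^m)}$ of \Cref{Thm-linear} as in \cite{TTW23}. Set $T_k := T - \frac{T}{q^k}$ for $k\ge 0$, so $T_0=0$, $T_k\uparrow T$ and $T_{k+1}-T_k = \frac{T(q-1)}{q^{k+1}}$. With this choice, $\rho_0(T_{k+2})$ and $\rho_\F(T_k)$ are explicit exponentials in $q^{mk}$. Define $a_0=(w_0,\psi_0)$. Inductively, let $a_{k+1}=(w_k^f,\psi_k^f)(T_{k+1}^-)$ be the state at $T_{k+1}$ of the solution of \eqref{LinSystem} on $(T_k,T_{k+1})$ with zero initial datum and source $(f_1,f_2)\chi_{(T_k,T_{k+1})}$. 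By \Cref{prop-esti-regular}, and since $T_{k+1}-T_k\le T$,
\[
\|a_{k+1}\|\le Ce^{CT}\|(f_1,f_2)\|_{L^1(T_k,T_{k+1};L^2)}\le Ce^{CT}\rho_\F(T_k)\Big\|\Big(\tfrac{f_1}{\rho_\F},\tfrac{f_2}{\rho_\F}\Big)\Big\|_{L^1(L^2\times L^2)},
\]
where the last step uses that $\rho_\F$ is non-increasing.

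On each $(T_k,T_{k+1})$ we apply \Cref{Thm-linear}, time-translated, with horizon $T_{k+1}-T_k$. This gives a control $h_k$ steering $a_k$ to zero with cost
\[
\|h_k\|_{L^2}\le Me^{M(T+\frac{q^{m(k+1)}}{T^m(q-1)^m})}\|a_k\|.
\]
We then set $h=\sum_k h_k\chi_{(T_k,T_{k+1})}$. Let $(w,\psi)$ be the solution of \eqref{System-source}. By linearity, on each $(T_k,T_{k+1})$ it is the sum of the controlled free solution started from $a_k$ and the source solution started from $0$. The controlled part vanishes at $T_{k+1}$, so the state at $T_{k+1}$ equals $a_{k+1}$, which is consistent with the induction.

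The weighted estimate comes from comparing exponents. Combining the two displayed bounds gives
\[
\|h_k\|\lesssim e^{CT}e^{\frac{Mq^{m(k+1)}}{(q-1)^mT^m}}\rho_\F(T_{k-1})\,\|f/\rho_\F\|_{L^1}
\]
for $k\ge1$. On $(T_k,T_{k+1})$ we have $\rho_0\ge\rho_0(T_{k+1})$ because $\rho_0$ is non-increasing. We must therefore check that
\[
\frac{\rho_\F(T_{k-1})}{\rho_0(T_{k+1})}\,e^{\frac{Mq^{m(k+1)}}{(q-1)^mT^m}}
\]
is bounded uniformly in $k$, and is in fact summable, for instance geometrically decaying. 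Substituting $T-T_j=Tq^{-j}$ reduces this to the exponent
\[
\frac{Mq^{mk}}{(q-1)^mT^m}\Big(q^m + pq^m - (1+p)q^{2m}q^{m}\cdots\Big).
\]
This is where the normalisations $q^{2m}$ in $\rho_\F$ and the constraints \eqref{choice-p_q} (equivalently \Cref{Remark-weights}) come in. The exponent must be negative and grow like $-cq^{mk}$, giving a summable sequence. If the index shift is off by one, I would redefine $\rho_\F$ with $T_{k-1}$ versus $T_{k+1}$, or adjust the power of $q$. The $k=0$ term uses $a_0$ directly and contributes the factor $e^{C(T+1/T^m)}\|(w_0,\psi_0)\|$.

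For the state, on $(T_k,T_{k+1})$ we bound $(w,\psi)$ by \Cref{prop-esti-regular} in terms of $\|a_k\|$, $\|h_k\|$ and $\|f\|_{L^1(T_k,T_{k+1})}$. Dividing by $\rho_0\ge\rho_0(T_{k+1})$ and using the same exponent comparison gives a uniform bound for $\rho_0^{-1}(w,\psi)$ in $C([T_k,T_{k+1}];L^2)\cap L^2(H^1_0\times H^2_1)$, and summing over $k$ yields \eqref{estimate-weighted}. Since $\rho_0^{-1}w,\rho_0^{-1}\psi\in C([0,T];L^2)$ are bounded while $\rho_0(t)\to0$, we conclude $w(T)=\psi(T)=0$, which is \eqref{eqnullcond}. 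Uniqueness follows from \Cref{prop-esti-regular}.

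The main obstacle is the bookkeeping of exponents. One has to check that the control cost $e^{Mq^{m(k+1)}/((q-1)T)^m}$ on shrinking intervals is beaten by the ratio $\rho_\F/\rho_0$ at the correct indices, uniformly in $T$. The factor $e^{CT}$ from the energy estimates must also be absorbed into the stated $e^{C(T+1/T^m)}$.
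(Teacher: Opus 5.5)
Your scheme is the paper's: the partition $T_k=T-T/q^k$, the split on each $(T_k,T_{k+1})$ into a source part with zero data and a controlled part starting from $a_k$, and the gluing argument. The gap is in the step you flag as the crux, the exponent comparison. The ratio you need there does not decay at all.

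With $T-T_j=Tq^{-j}$ and $T_{k+1}-T_k=T(q-1)q^{-(k+1)}$, the exponent of $\rho_\F(T_{k-1})$ is $-(1+p)q^{2m}q^{m(k-1)}M/((q-1)^mT^m)=-(1+p)q^{m(k+1)}M/((q-1)^mT^m)$. Your bracket is therefore $q^m+pq^m-(1+p)q^m=0$, not the expression with $(1+p)q^{2m}q^m$. In other words,
\[
\rho_0(T_{k+1})=\rho_\F(T_{k-1})\,e^{M/(T_{k+1}-T_k)^m}\qquad\text{for all } k\ge 1,
\]
with equality, for every admissible $p,q$. The ratio is identically $1$, so there is no $-cq^{mk}$ to exploit. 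Redefining $\rho_\F$ is not an option either, since the weights are fixed in the statement.

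This breaks your argument because you bounded $\|a_k\|$ using the global norm $\|(f_1/\rho_\F,f_2/\rho_\F)\|_{L^1(0,T;L^2\times L^2)}$. For every $k\ge1$ you then only get
\[
\Big\|\frac{h_k}{\rho_0}\Big\|_{L^2((T_k,T_{k+1})\times\mathcal{O})}\le Ce^{CT}\Big\|\Big(\frac{f_1}{\rho_\F},\frac{f_2}{\rho_\F}\Big)\Big\|_{L^1(0,T;L^2\times L^2)}.
\]
Hence $\|h/\rho_0\|^2_{L^2}=\sum_k\|h_k/\rho_0\|^2$ is a sum of infinitely many terms that are not small, and it is not controlled. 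The same failure hits the $L^2(0,T;H^1_0)$ and $L^2(0,T;H^2_1)$ parts of the weighted state. Only the $C([0,T];L^2)$ part survives, because it is a supremum over $k$.

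The missing idea is to keep the time localisation instead of relying on decay of the weights. Since $a_k$ only sees the source on $(T_{k-1},T_k)$ and $\rho_\F$ is non-increasing,
\[
\|a_k\|\le Ce^{CT}\rho_\F(T_{k-1})\Big\|\Big(\frac{f_1}{\rho_\F},\frac{f_2}{\rho_\F}\Big)\Big\|_{L^1(T_{k-1},T_k;L^2\times L^2)}.
\]
With the identity above this gives $\|h_k/\rho_0\|_{L^2((T_k,T_{k+1})\times\mathcal{O})}\le Ce^{CT}\|(f_1/\rho_\F,f_2/\rho_\F)\|_{L^1(T_{k-1},T_k;L^2\times L^2)}$. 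For the state on $[T_k,T_{k+1}]$ you get a bound by the $L^1(T_{k-1},T_{k+1})$ norm, using $\rho_\F(T_k)\le\rho_\F(T_{k-1})$ and $e^{M/(T_{k+1}-T_k)^m}\ge1$ for the local source term.

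Summability then follows from $\sum_k b_k^2\le\big(\sum_k b_k\big)^2$ applied to these localised $L^1$ norms. Each subinterval is counted at most twice, so their sum is at most twice the global $L^1(0,T)$ norm. With this change, plus your treatment of $k=0$ (which is fine), the argument closes exactly as in the paper.
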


\begin{proof}
Let any $T>0$ be given. Then, we define the sequence $(T_k)_{k\geq 0}$ as 
	\begin{align}\label{def-T-k}
		T_k = T - \frac{T}{q^k}, \qquad \forall k\geq 0,
		\end{align}
	where $q$ is given by \eqref{choice-p_q} and it yields 
	\begin{equation}\label{timedif}
	 0< T_{k+1}-T_k\le T({{2}^{\frac{1}{2m}}}-1), \quad \forall\, k\ge 0.
	\end{equation}
With this $T_k$, it can be shown that the weight functions  $\rho_0$ and $\rho_\F$ enjoy the following relation:
\begin{align}\label{relation_rho_0_rho_F}
	\rho_0(T_{k+1}) =  \rho_{\F}(T_{k-1}) {e^{\frac{M}{(T_{k+1}-T_{k})^m}}} , \qquad \forall k\geq 1.
\end{align}
Here, we mention that throughout the proof of the proposition, $C$ denotes a generic positive constant independent of $T$ and $k$.

\noindent 
Our aim is to decompose \eqref{System-source} on $(T_k, T_{k+1})$ for each $k\ge 0$, into two systems- one is only with the forcing term and zero initial data, and the other system is with control and non-zero initial data as follows. 

\noindent 
For all $k\ge 0$, we consider $\left ( \widetilde w, \widetilde\psi\right )$ is such that
\begin{align*}
	& \widetilde w \in C([T_k,T_{k+1}]; 
	(L^2(0,1)) \cap L^2(T_k,T_{k+1}; H^1_{0}(0,1)), 
	\\
	& \widetilde \psi \in C([T_k,T_{k+1}]; (L^2(0,1)) \cap L^2(T_k,T_{k+1}; H^2_1),
\end{align*}
and that
$(\widetilde w, \widetilde\psi)$ is the unique solution of 
\begin{equation}\label{System-source-no-control}
	\begin{cases}
		\widetilde w_t-\gamma \widetilde w_{xx}+\bar u(x) \widetilde w_x+\bar u'(x) \widetilde w=\gamma_1 \widetilde \psi_x+f_1, & \text{in } Q_k, \\
		\widetilde \psi_t+ \widetilde  \psi_{xxxx}+\gamma_2\widetilde \psi_{xx} + \bar u(x)\widetilde \psi_x = f_2 &\text{in } \in  Q_k,\\
		 	\widetilde w(t, 0)=0,  \ 	\widetilde   w(t, 1)=0,  & t \in (T_k, T_{k+1}), \\
			\widetilde\psi_x(t,0)=0, \  	\widetilde\psi_x(t,1)=0, & t \in (T_k, T_{k+1}),    \\
			\widetilde\psi_{xxx}(t, 0)=0,  \  	\widetilde\psi_{xxx}(t, 1)=0, & t \in (T_k, T_{k+1}),\\
			\widetilde w(T^+_{k},x)=0, \widetilde \psi(T^+_{k},x)=0, & x \in  (0, 1).
	\end{cases}
\end{equation}
where $Q_k=(T_k, T_{k+1})\times (0,1).$ Moreover, for all $k\ge 0$, it satisfies the estimate (using \Cref{prop-esti-regular}) 
\begin{align*}
	\left\|{\widetilde w} \right\|_{C([T_k,T_{k+1}]; 
		(L^2(0,1)) \cap L^2(T_k,T_{k+1}; H^1_{0}(0,1))} + 
	\left\|\widetilde \psi \right\|_{C([T_k,T_{k+1}]; (L^2(0,1)) \cap L^2(T_k,T_{k+1}; H^2_1)}\\
	\leq  Ce^{CT}  
	\left\|(f_1, f_2) \right\|_{L^1(T_k, T_{k+1}; L^2\times L^2)}.
\end{align*}
Denoting the sequence $\{a_k\}_{k\geq 0}$ with
\begin{align}\label{sequence_a_k}
	a_0 = (w_0,\psi_0) \in (L^2(0,1))^2, \quad a_{k+1} =  
	\left ( \widetilde w( T^{-}_{k+1}), \widetilde \psi( T^{-}_{k+1}) 
	\right )\in (L^2(0,1))^2, \,\,\,\, \forall k \geq 0 ,
\end{align}
from the above estimate, we have 
\begin{align}\label{esti-m-k}
	\left\|a_{k+1} \right\|_{L^2\times L^2} \leq  Ce^{CT} \left\|(f_1, f_2) \right\|_{L^1(T_k, T_{k+1}; L^2\times L^2)} , \quad \forall k\geq 0.
\end{align}

\noindent 
Next, for each $k\geq 0$, we consider the system with control $h_k\in L^2(T_k, T_{k+1}; L^2(\mathcal{O}))$ in the time interval $(T_k, T_{k+1})$ 
\begin{equation}\label{eq tk}
	\begin{cases}
	\overline w_t-\gamma \overline w_{xx}+\bar u(x) \overline w_x+\bar u'(x) \overline w=\gamma_1 \overline \psi_x, & \text{in } Q_k, \\
	\overline \psi_t+ \bar  \psi_{xxxx}+\gamma_2\overline \psi_{xx} + \bar u(x)\overline \psi_x = \chi_{\mathcal{O}} h_k &\text{in } \in  Q_k,\\
	\overline w(t, 0)=0,  \ 	\overline   w(t, 1)=0,  & t \in (T_k, T_{k+1}), \\
	\overline\psi_x(t,0)=0, \  	\overline\psi_x(t,1)=0, & t \in (T_k, T_{k+1}),    \\
	\overline\psi_{xxx}(t, 0)=0,  \  	\overline\psi_{xxx}(t, 1)=0, & t \in (T_k, T_{k+1}),\\
	(\overline w(T^+_{k},x), \overline \psi(T^+_{k},x))=a_k, & x \in  (0, 1),
	\end{cases}
\end{equation}
such that $(\overline w(T^-_{k+1},x), \overline \psi(T^-_{k+1},x))=(0, 0)$, for all $x\in (0, 1)$. Due to \Cref{Thm-linear}, such a control $h_k\in L^2(T_k,T_{k+1}; L^2(\mathcal{O}))$
exists and it satisfies
\begin{align}\label{control_estimate tk}
	\|h_k\|_{L^2((T_k, T_{k+1})\times \mathcal{O})} \leq M e^{M(T_{k+1}-T_k)} e^{M/
		{\left (  T_{k+1}-T_k
		\right )^m } }	\left\|a_{k} \right\|_{L^2\times L^2} , \quad \forall k \, \geq 0, 
\end{align}
where the constant
$M>0$ as in \eqref{control_estimate}) neither depends on $k$, $T$ nor on $a_k$.
Combining \eqref{timedif}, \eqref{esti-m-k} and \eqref{control_estimate tk}, for all $k\ge 1$, we have  
\begin{align}\label{hk1}
	\|h_{k}\|_{L^2((T_{k},T_{k+1})\times \mathcal{O})} &\leq Ce^{CT}e^{M/
		{\left (  T_{k+1}-T_{k}
		\right )^m }}\|a_{k}\|_{L^2\times L^2}    \notag \\
	&\leq C e^{CT} e^{M/
		{\left (  T_{k+1}-T_{k}
		\right )^m} }\left\|(f_1,f_2) \right\|_{L^1(T_{k-1}, T_{k}; L^2\times L^2)} \notag \\
	& \leq C e^{CT} {e^{M/(T_{k+1} - T_{k})^m}} \rho_{\F}(T_{k-1}) \left\|{\left(\frac{f_1}{\rho_\F},\frac{f_2}{\rho_\F}\right)}\right\|_{L^1(T_{k-1}, T_{k}; L^2\times L^2)},
\end{align}
since
$\rho_\F$ is a non-increasing function.
Then, using the relation
\eqref{relation_rho_0_rho_F}, one can write 
\begin{equation*}
	\norm{{h}_{k}}_{L^2((T_{k},T_{k+1})\times \mathcal{O})}\leq C e^{CT}\rho_0(T_{k+1})\left\|{\left(\frac{f_1}{\rho_\F},\frac{f_2}{\rho_\F}\right)}\right\|_{L^1(T_{k-1}, T_{k}; L^2\times L^2)}, \quad \forall k \geq 1.  
\end{equation*}
Again, since
$\rho_0$ is also non-increasing, we deduce that 
\begin{align}\label{esti-contr-k}
	\norm{\frac{{h}_{k}}{\rho_0}}_{L^2((T_{k},T_{k+1})\times \mathcal{O})} & \leq\frac{1}{\rho_0(T_{k+1})}\norm{{h}_{k}}_{L^2((T_{k},T_{k+1})\times \mathcal{O})}  \notag \\
	&\leq C e^{CT}\left\|{\left(\frac{f_1}{\rho_\F},\frac{f_2}{\rho_\F}\right)}\right\|_{L^1(T_{k-1}, T_{k}; L^2\times L^2)}, \text{ for all } k \geq 1,
\end{align} 
and for $k=0$, from \eqref{control_estimate tk}, using the definitions of $a_0,$ $\rho_0(T_1)$ and $T_1$, we get 
\begin{align}\label{esti-h_0}
	\norm{\frac{{h}_0}{\rho_0}}_{L^2((0,T_1)\times \mathcal{O})}\le 
	\frac{1}{\rho_0(T_1)}\norm{{h}_0}_{L^2((0,T_1)\times \mathcal{O})} & \leq Me^{M T_1}{e^{M/T_1^m}}\frac{1}{\rho_0(T_1)}\norm{(w_0,\psi_0)}_{L^2\times L^2}  \notag \\
	& \leq C{e^{C(T+\frac{1}{T^m})}} \norm{(w_0,\psi_0)}_{L^2\times L^2}. 
\end{align}
We now define the control function
${h}$ as
\begin{equation*}
	h:=\sum_{k\geq 0} {h}_k \chi_{(T_k,T_{k+1})},   \quad \text{in } \ (0,T)\times \mathcal{O}, 
\end{equation*}
where
$\chi$ denotes the characteristic function. Combining  the estimates \eqref{esti-h_0} and \eqref{esti-contr-k}, we obtain 
\begin{align}\label{wght_ctl_est}
	\norm{\frac{h}{\rho_0}}_{L^2((0,T)\times \mathcal{O})}\leq C{e^{C(T+\frac{1}{T^m})}}
	\left( \norm{(w_0,\psi_0)}_{L^2\times L^2}+\left\|{\left(\frac{f_1}{\rho_\F},\frac{f_2}{\rho_\F}\right)}\right\|_{L^1(0, T; L^2\times L^2)}\right),
\end{align}
for some constant $C>0$.

\noindent 
Let us set $ (w,\psi)=(\widetilde{w},\widetilde{\psi})+(\overline{w},\overline{\psi})$,
where $(\widetilde{w},\widetilde{\psi})$ and $(\overline{w},\overline{\psi})$ are the solutions of \eqref{System-source-no-control} and \eqref{eq tk}, respectively. 
We note that $(w,\psi)$ satisfies \eqref{System-source} on $(T_k, T_{k+1})$ for each $k\ge 0$, with $(w(T_0, \psi(T_0))=(w_0, \psi_0)\in (L^2(0,1))^2$. Since 
for each $k\ge0$, we have $\left(\widetilde{w}(T^+_{k+1}), \widetilde{\psi}(T^+_{k+1})\right)=(0, 0)$ and $\left( \overline{w}(T^-_{k+1}), \overline{\psi}(T^-_{k+1})\right )=(0, 0)$,
we have,  for all
$k\geq0$, that
\begin{align*}
	&	
	\left ( w(T^-_{k+1}), \psi(T^-_{k+1})
	\right ) =
	\left ( \widetilde{w}(T^-_{k+1}), \widetilde{\psi}(T^-_{k+1}) 
	\right ) + 
	\left ( \overline{w}(T^-_{k+1}), \overline{\psi}(T^-_{k+1})
	\right )  = a_{k+1}, \\
	&	
	\left ( w(T^+_{k+1}), \psi(T^+_{k+1})
	\right ) =
	\left ( \widetilde{w}(T^+_{k+1}), \widetilde{\psi}(T^+_{k+1}) 
	\right ) + 
	\left ( \overline{w}(T^+_{k+1}), \overline{\psi}(T^+_{k+1})
	\right )  = a_{k+1}. 
\end{align*}
This gives the continuity of the component $(w, \psi)$ at each $T_{k+1}$ for $k\geq 0$, more precisely $  (w,\psi) \in  C([T_{k},T_{k+1}]; (L^2(0,1)\times L^2(0,1))).$
Consequently, $w\in C([0,T]; L^2(0,1)) \cap L^2(0,T; H^1_0(0,1))$ and $\psi\in C([0,T]; L^2(0,1)) \cap L^2(0,T; H^2_1)$ and $(w, \psi)$ satisfies \eqref{System-source} on $(0, T)$. Moreover, we have the following estimate (thanks to \Cref{thwellposed} and \Cref{prop-esti-regular}) for all
$k\geq 0$: 
\begin{align}\label{esti-sol-t-k}
\left\|{w} \right\|_{C([T_k,T_{k+1}]; 
	(L^2(0,1)) \cap L^2(T_k,T_{k+1}; H^1_{0}(0,1))} + 
\left\| \psi \right\|_{C([T_k,T_{k+1}]; (L^2(0,1)) \cap L^2(T_k,T_{k+1}; H^2_1(0,1))}\\ 
	\nonumber	\leq  Ce^{CT}
	\left(   \norm{a_{k}}_{L^2\times L^2}+\norm{{h}_{k}}_{L^2((T_{k},T_{k+1})\times \mathcal{O})}+ \left\|(f_1, f_2) \right\|_{L^1(T_k, T_{k+1}; L^2\times L^2)}\right)  
\end{align}
We start with
$k\geq 1$; using the estimates of
$a_k$ and
$h_k$ from
\eqref{esti-m-k} and
\eqref{control_estimate tk} (resp.), we get 
\begin{align*} 
	&\left\|{w} \right\|_{C([T_k,T_{k+1}]; 
		(L^2(0,1)) \cap L^2(T_k,T_{k+1}; H^1_{0}(0,1))} + 
	\left\| \psi \right\|_{C([T_k,T_{k+1}]; (L^2(0,1)) \cap L^2(T_k,T_{k+1}; H^2_1(0,1))}\\
	&\leq Ce^{CT} {e^{\frac{M}{(T_{k+1}-T_{k})^m}}}\left\|(f_1, f_2) \right\|_{L^1(T_{k-1}, T_k; L^2\times L^2)}+ C e^{CT} \left\|(f_1, f_2) \right\|_{L^1(T_k, T_{k+1}; L^2\times L^2)}\\
	&\leq Ce^{CT} \rho_\F(T_{k-1}) {e^{\frac{M}{(T_{k+1}-T_{k})^m}}}\left\|{\left(\frac{f_1}{\rho_\F},\frac{f_2}{\rho_\F}\right)}\right\|_{L^1(T_{k-1}, T_{k+1}; L^2\times L^2)},
\end{align*}
since
$\rho_{\F}$ is non-increasing in
$(0,T)$, and then \eqref{relation_rho_0_rho_F} yields 
\begin{align}\label{esti-sol-k-k+1}
	& \norm{\frac{w}{\rho_0}}_{C([T_k,T_{k+1}]; 
		(L^2(0,1)) \cap L^2(T_k,T_{k+1}; H^1_{0}(0,1))  }
	+ \norm{\frac{\psi}{\rho_0}}_{C([T_k,T_{k+1}]; 
		(L^2(0,1)) \cap L^2(T_k,T_{k+1}; H^2_{1}(0,1))}  \notag \\
	&\hspace{5.5cm} \leq   Ce^{CT} \left\|{\left(\frac{f_1}{\rho_\F},\frac{f_2}{\rho_\F}\right)}\right\|_{L^1(T_{k-1}, T_{k+1}; L^2\times L^2)}, \quad \forall k\geq 1.
\end{align}
Now, for $k=0$, i.e., on $(0, T_1)$, using \eqref{esti-h_0} and the fact that $\rho_\F(t)<1$ on $(0, T_1)$, from \eqref{esti-sol-t-k}, we get 
\begin{align}\label{esti-sol-0-T}
\notag	& \norm{\frac{w}{\rho_0}}_{C([0,T_1]; 
		(L^2(0,1)) \cap L^2(0,T_1; H^1_{0}(0,1)) }
	+ \norm{\frac{\psi}{\rho_0}}_{C([0,T_1]; 
		(L^2(0,1)) \cap L^2(0,T_1; H^2_{1}(0,1))} \\
	& \leq    C {e^{CT+\frac{C}{T^m}} }  
	\left ( \|(w_0, \psi_0)\|_{L^2\times L^2} + \left\|{\left(\frac{f_1}{\rho_\F},\frac{f_2}{\rho_\F}\right)}\right\|_{L^1(0, T_1; L^2\times L^2)}
	\right ). 
\end{align}
Combining \eqref{esti-sol-0-T}, \eqref{esti-sol-k-k+1} and \eqref{wght_ctl_est}, we obtain \eqref{estimate-weighted}. Moreover, since $w(T, \cdot)$ and $\psi(T, \cdot)$ are defined and belong to $L^2(0, 1)$, and $\rho_0^{-1}(t)\rightarrow \infty$ as $t\rightarrow T$, it yields \eqref{eqnullcond}.

This completes the proof.

\end{proof}

\section{Local null controllability of the nonlinear system}\label{Sec:Control nonlin}
This section is devoted to the proof of our main results \Cref{Thm-nonlinear}. In the previous section we have proved that linearized model \eqref{System-source} is null controllable whenever the source terms lie in the weighted space $\F$. Using this result along with Banach fixed point theorem, we conclude the main result.

Let us recall the weighted spaces \eqref{space_F}, \eqref{space_Y} and \eqref{space_V}. 

First, we obtain the estimates of $N_1$ and $N_2$ in the weighted space. 
\begin{lemma}\label{non est}
Let $N_1$ and $N_2$ be given by \eqref{nonlinear}. Then for any $(u, \varphi)\in \Y$, $N_i(u, \varphi)\in \F$, for $i=1, 2$. Further, there exists a positive constant $C_1$ 
depending only on the coefficients in $N_1$ and $N_2$ such that 
        \begin{align}\label{non est1}
            \|N_i(u,\varphi)\|_{\F}\leq C_1(\|(u, \varphi)\|^4_{\Y}+\|(u, \varphi)\|^3_{\Y}+\|(u, \varphi)\|^2_{\Y}), \quad  i=1,2,  \quad \forall\, (u, \varphi)\in \Y,
        \end{align}
     and for all $(u_1, \varphi_1), (u_2, \varphi_2)\in \Y$, 
        \begin{align}\label{Lipnl}
          \|N_i(u_1,\varphi_1)-N_i(u_2,\varphi_2)\|_{\F} \leq &C_1\sum_{j=1}^2\Big(\|(u_j, \varphi_j)\|^3_{\Y}+\|(u_j, \varphi_j)\|^2_{\Y}\nonumber\\
          &+\|(u_j, \varphi_j)\|_{\Y}\Big)\|(u_1-u_2, \varphi_1-\varphi_2)\|_{\Y}, \quad i=1,2.     
          \end{align}
\end{lemma}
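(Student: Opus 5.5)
The plan is to bound each monomial of $N_1$ and $N_2$ separately, using the $L^\infty$-in-time, $L^2$-in-space control together with the $L^2$-in-time control of higher derivatives that membership in $\Y$ provides. Everything rests on the weight inequality of \Cref{Remark-weights}, namely $\rho_0^2/\rho_\F\le 1$ on $[0,T]$; since $\rho_0\le 1$, this also gives $\rho_0^k/\rho_\F\le 1$ for all $k\ge 2$. Each monomial of degree $k\in\{2,3,4\}$ in $(u,\varphi)$ and their derivatives is written as
\[
\rho_\F^{-1}\,(\text{monomial}) \;=\; \frac{\rho_0^k}{\rho_\F}\prod_{j=1}^{k}\bigl(\rho_0^{-1}\,\partial^{\alpha_j} z_j\bigr),
\qquad z_j\in\{u,\varphi\}.
\]
It then suffices to bound $\prod_j \rho_0^{-1}\partial^{\alpha_j}z_j$ in $L^1(0,T;L^2(0,1))$, which is the norm defining $\F$ in \eqref{space_F}. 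I set $U=u/\rho_0$ and $\Phi=\varphi/\rho_0$. These lie in $C([0,T];L^2)\cap L^2(0,T;H^1_0)$ and $C([0,T];L^2)\cap L^2(0,T;H^2_1)$ respectively, with norms bounded by $\|(u,\varphi)\|_\Y$.

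The quadratic terms are the critical ones: $ww_x$, $\psi_x\psi_{xx}$, $\psi\psi_x$, $w\psi_x$, $\psi\psi_{xx}$ and $\psi_x^2$. For $UU_x$, in one dimension I use
\[
\|UU_x\|_{L^1L^2}\le \int_0^T\|U\|_{L^\infty}\|U_x\|_{L^2}\,dt\le C\|U\|_{L^2H^1}^2 .
\]
For $\Phi_x\Phi_{xx}$ I similarly use $\|\Phi_x\|_{L^\infty}\|\Phi_{xx}\|_{L^2}$. Since $H^2_1\hookrightarrow W^{1,\infty}$, with the $H^2_1$ norm controlled by $\|\Phi\|_{L^2}+\|\Phi_{xx}\|_{L^2}$ through \eqref{ineq1}, this product is bounded by $C\|\Phi\|_{L^2H^2}^2$. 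The terms $\Phi\Phi_{xx}$, $\Phi_x^2$, $\Phi\Phi_x$ and $U\Phi_x$ are handled identically: put one factor in $L^\infty_x$, the other in $L^2_x$, then apply Cauchy--Schwarz in time to two $L^2_t$ factors.

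For the cubic and quartic terms ($\psi^2\psi_x$, $\psi^3\psi_x$, $\psi^2\psi_{xx}$, $\psi\psi_x^2$), the extra factors of $\Phi$ go into $L^\infty(Q_T)$. To do this I bound $\|\Phi\|_{L^\infty_x}\le C\|\Phi\|_{L^2}^{1/2}\|\Phi\|_{H^1}^{1/2}$. Because $\Phi$ is in $C([0,T];L^2)\cap L^2(0,T;H^2)$, this factor lies in $L^4_t$. Rather than using the sharper interpolation, it is simplest to keep one factor $\|\Phi\|_{L^\infty_x}$ in $L^2_t$, via $\|\Phi\|_{L^2H^1}$. The remaining factors must be bounded in $L^\infty_t$, which the $C([0,T];L^2)$ regularity alone does not do. So I bound $\|\Phi\|_{L^\infty_x}^2\le C\|\Phi\|_{L^2}\|\Phi\|_{H^1}$, and Hölder in time then gives, for example for $\Phi^2\Phi_{xx}$,
\[
\int_0^T\|\Phi\|_{L^\infty}^2\|\Phi_{xx}\|_{L^2}\,dt\le C\|\Phi\|_{C L^2}\|\Phi\|_{L^2H^1}\|\Phi\|_{L^2 H^2}.
\]
The quartic term goes the same way with one more $\|\Phi\|_{L^\infty_x}$. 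For that term I use $\|\Phi\|_{L^\infty_x}^3\|\Phi_x\|_{L^2}\le C\|\Phi\|_{CL^2}^{2}\|\Phi\|_{H^2}^{2}$ by Gagliardo--Nirenberg, which is integrable in time. Collecting all terms, with the constants $\bar\phi$ absorbed into $C_1$, gives \eqref{non est1}.

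For \eqref{Lipnl}, each monomial difference is telescoped as $a_1b_1-a_2b_2=(a_1-a_2)b_1+a_2(b_1-b_2)$, and likewise for higher products. Each resulting term is estimated exactly as above, with one factor replaced by the difference. That factor enters linearly, and the remaining factors give the powers $1,2,3$ of $\|(u_j,\varphi_j)\|_\Y$.

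The main obstacle is purely the bookkeeping of time integrability in the highest-order products, chiefly $\psi^2\psi_{xx}$ and $\psi^3\psi_x$. These need the Gagliardo--Nirenberg split so that no factor is required to lie in $L^\infty_t H^1$, which $\Y$ does not provide.
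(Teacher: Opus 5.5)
Your proposal is correct and is essentially the paper's argument: use $\rho_0^k/\rho_\F\le 1$ for $k=2,3,4$, put one factor in $L^\infty_x$ via $H^1(0,1)\hookrightarrow C([0,1])$ and the others in $L^2_x$, and handle the cubic and quartic terms by interpolating between $C([0,T];L^2)$ and $L^2(0,T;H^2)$. The paper writes that interpolation as the $L^4(0,T;H^1)$ bound \eqref{interpolineq}, you write it as Agmon/Gagliardo--Nirenberg, and both then telescope the differences for \eqref{Lipnl}. One phrase is inaccurate: the extra factors do not ``go into $L^\infty(Q_T)$'', since $\Y$ does not provide that, but the Agmon-based time-integrability estimates you actually carry out are the right ones and are correct.
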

\begin{proof}
In view of \Cref{Remark-weights} and \eqref{def_weight_func}, we first note that 
\begin{equation}\label{ratioweight}
\left|\frac{\rho^2_0}{\rho_{\F}}\right|<1, \quad \left|\frac{\rho^3_0}{\rho_{\F}}\right|<1, \quad \left|\frac{\rho^4_0}{\rho_{\F}}\right|<1, \quad \mathrm{on}\quad (0, T)\times (0, 1). 
\end{equation}
In the sequel, $C_1$ denotes a generic positive constant which may depend on the coefficients in $N_1$ and $N_2$. 

Recall the definition of $N_1$ and $N_2$ as given in \eqref{nonlinear}. 
For any $(u, \varphi)\in \Y$, we have 
\begin{align}\label{eqN1}
\norm{\frac{N_1(u,\varphi)}{\rho_{\F}}}_{L^1(L^2)}\leq C_1\int_{0}^{T}\frac{1}{\rho_{\F}(t)}\Big(\norm{u(t, \cdot) u_x(t, \cdot)}_{L^2}+\norm{\varphi_x(t, \cdot)\varphi_{xx}(t, \cdot)}_{L^2} \notag \\ 
+\norm{\varphi^3(t, \cdot)\varphi_x(t, \cdot)}_{L^2}+\norm{\varphi^2(t, \cdot)\varphi_x(t, \cdot)}_{L^2}+\norm{\varphi(t, \cdot)\varphi_x(t, \cdot)}_{L^2}\Big)\, dt
\end{align}
Using the continuous embedding of $H^1(0, 1)$ in $C([0, 1])$ in one-dimensional space and Cauchy-Schwarz inequality along with \eqref{ratioweight}, for any $(u, \varphi)\in \Y$, we estimate the first, second and fifth term in the right hand side above as follows: 
$$ 
\int_{0}^{T}\frac{1}{\rho_{\F}(t)}\norm{u(t, \cdot) u_x(t, \cdot)}_{L^2}\, dt \le \int_0^T \frac{\rho^2_0(t)}{\rho_\F(t)} \norm{\frac{u(t, \cdot)}{\rho_0(t)}}_{L^\infty}\norm{\frac{u(t, \cdot)}{\rho_0(t)}}_{L^2}\, dt\le C_1\norm{\frac{u}{\rho_0}}^2_{L^2( H^1)},
$$
$$ 
\int_{0}^{T}\frac{1}{\rho_{\F}(t)}\norm{\varphi_x(t, \cdot)\varphi_{xx}(t, \cdot)}_{L^2}\, dt \le \int_0^T \frac{\rho^2_0(t)}{\rho_\F(t)} \norm{\frac{\varphi_x(t, \cdot)}{\rho_0(t)}}_{L^\infty}\norm{\frac{\varphi_{xx}(t, \cdot)}{\rho_0(t)}}_{L^2}\, dt\le C_1\norm{\frac{\varphi}{\rho_0}}^2_{L^2( H^2)},
$$
$$ 
\int_{0}^{T}\frac{1}{\rho_{\F}(t)}\norm{\varphi(t, \cdot)\varphi_x(t, \cdot)}_{L^2}\, dt \le \int_0^T \frac{\rho^2_0(t)}{\rho_\F(t)} \norm{\frac{\varphi(t, \cdot)}{\rho_0(t)}}_{L^\infty}\norm{\frac{\varphi_x(t, \cdot)}{\rho_0(t)}}_{L^2}\, dt\le C_1\norm{\frac{\varphi}{\rho_0}}^2_{L^2( H^1)}.
$$
To estimate the third and fourth terms in \eqref{eqN1}, note that using interpolation argument, for any $\varphi\in C([0,T]; L^2(0,1)) \cap L^2(0,T; H^2_1(0,1)),$ we have
 $$\norm{\varphi(t,\cdot)}_{H^1(0,1)}\leq C_1 \norm{\varphi(t,\cdot)}^{1/2}_{H^2(0,1)} \norm{\varphi(t,\cdot)}^{1/2}_{L^2(0,1)}, \quad \forall\, t\in (0, T),$$
 for some positive constant $C_1$ independent of $t$ and $\varphi$ and hence  this implies that
\begin{align}\label{interpolineq}
	\norm{\varphi}_{L^4(0,T;H^1(0,1))}\leq C_1 \norm{\varphi}^{1/2}_{C([0,T];L^2(0,1))}\norm{\varphi}^{1/2}_{L^2(0,T;H^2(0,1))}.
\end{align} 
Using the continuous embedding of $H^1(0, 1)$ in $C([0, 1])$ in one-dimensional space and \eqref{interpolineq} for $\frac{\varphi}{\rho_0}\in C([0, T]; L^2(0,1))\cap L^2(0, 1; H^2_1)$, along with \eqref{ratioweight}, we obtain the estimate of the third term in \eqref{eqN1}:
$$
\begin{array}{l}
\displaystyle \int_0^T \frac{1}{\rho_\F(t)}\norm{\varphi_x(t, \cdot) \varphi^3(t, \cdot)}_{L^2}\, dt\leq
 \int_{0}^{T}\frac{\rho^4_0(t)}{\rho_\F(t)}\norm{\frac{\varphi(t, \cdot)}{\rho_0(t)}}^3_{L^{\infty}}\norm{\frac{\varphi_{x}(t, \cdot)}{\rho_0(t)}}_{L^2}\, dt\\[2.mm]
 \displaystyle \leq C_1 \int_{0}^{T}\norm{\frac{\varphi(t, \cdot)}{\rho_0(t)}}^3_{H^1}\norm{\frac{\varphi_{x}(t, \cdot)}{\rho_0(t)}}_{L^2}\, dt
\leq C_1 \norm{\frac{\varphi}{\rho_0}}^4_{L^4(H^1(0,1))}\leq C_1 \norm{\frac{\varphi}{\rho_0}}^2_{C([0, T]; L^2)}\norm{\frac{\varphi}{\rho_0}}^2_{L^2(H^2)}, 
\end{array}
$$
and the fourth term in \eqref{eqN1}
$$
\begin{array}{l}
\displaystyle \int_0^T \frac{1}{\rho_\F(t)}\norm{\varphi^2(t, \cdot)\varphi_x(t, \cdot)}_{L^2}\, dt\leq
 \int_{0}^{T}\frac{\rho^3_0(t)}{\rho_\F(t)}\norm{\frac{\varphi(t, \cdot)}{\rho_0(t)}}^2_{L^{\infty}}\norm{\frac{\varphi_{x}(t, \cdot)}{\rho_0(t)}}_{L^2}\, dt  \\[2.mm]
 \displaystyle 
 \leq C_1 \int_{0}^{T}\norm{\frac{\varphi(t, \cdot)}{\rho_0(t)}}^2_{H^1}\norm{\frac{\varphi(t, \cdot)}{\rho_0(t)}}_{H^2}\, dt
\leq C_1 \norm{\frac{\varphi}{\rho_0}}^2_{L^4(H^1)}\norm{\frac{\varphi}{\rho_0}}_{L^2(H^2)}
\leq C_1 \norm{\frac{\varphi}{\rho_0}}_{C([0, T]; L^2)}\norm{\frac{\varphi}{\rho_0}}^2_{L^2(H^2)}.
\end{array}
$$
Similarly, $\norm{\frac{N_2(u,\varphi)}{\rho_{\F}}}_{L^1(L^2)}$ can be estimated for any $(u, \varphi)\in \Y$. 
Finally, using above estimates and Poincar\'e inequality for $H^1_0(0, 1)$ along with \eqref{ineq1} for $H^2_1$, we obtain \eqref{non est1}.

To obtain the Lipschitz estimate \eqref{Lipnl}, we note that for any $(u_1, \varphi_2), (u_2, \varphi_2)\in \Y$, 
\begin{align*}
	\norm{\frac{N_1(u_1,\varphi_1)-N_1(u_2,\varphi_2)}{\rho_{\F}}}_{L^1(L^2)}&\leq C_1\bigg[\int_{0}^{T}\frac{1}{\rho_{\F}}\big[\norm{u_1u_{1,x}-u_2u_{2,x}}_{L^2}+\norm{\varphi_{1,x}\varphi_{1,xx}-\varphi_{2,x}\varphi_{2,xx}}_{L^2}\\
	&+\norm{\varphi^3_{1}\varphi_{1,x}-\varphi^3_{2}\varphi_{2,x}}_{L^2}+\norm{\varphi^2_{1}\varphi_{1,x}-\varphi^2_{2}\varphi_{2,x}}_{L^2}+\norm{\varphi_{1}\varphi_{1,x}-\varphi_{2}\varphi_{2,x}}_{L^2}\big]\, dt.
	\bigg]
\end{align*}
Using the similar argument as above, we estimate each term as follows:
\begin{align*}\bullet\int_{0}^{T}\frac{1}{\rho_{\F}} \norm{u_1u_{1,x}-u_2u_{2,x}}_{L^2}\, dt &=\int_{0}^{T}\frac{1}{\rho_{\F}} \norm{u_1(u_{1,x}-u_{2,x})+(u_1-u_2)u_{2,x}}_{L^2}\, dt\\
	&
	\leq C_1 \int_{0}^{T}\frac{\rho_0^2}{\rho_{\F}} \norm{\frac{u_1-u_2}{\rho_0}}_{H^1}\left(\norm{\frac{u_1}{\rho_0}}_{H^1}+\norm{\frac{u_2}{\rho_0}}_{H^1}\right) \, dt\\
	&\leq C_1 \norm{\frac{u_1-u_2}{\rho_0}}_{L^2(H^1)}\left(\norm{\frac{u_1}{\rho_0}}_{L^2(H^1)}+\norm{\frac{u_2}{\rho_0}}_{L^2(H^1)}\right),
	\end{align*}

\begin{align*}\bullet\int_{0}^{T}\frac{1}{\rho_{\F}} \norm{\varphi_{1,x}\varphi_{1,xx}-\varphi_{2,x}\varphi_{2,xx}}_{L^2}\, dt
&=\int_{0}^{T}\frac{1}{\rho_{\F}} \norm{\varphi_{1,x}(\varphi_{1,xx}-\varphi_{2,xx})+(\varphi_1-\varphi_2)_x \varphi_{2,xx}}_{L^2}\, dt\\
	&\leq C_1 \int_{0}^{T}\frac{\rho_0^2}{\rho_{\F}} \norm{\frac{\varphi_1-\varphi_2}{\rho_0}}_{H^2}\left(\norm{\frac{\varphi_1}{\rho_0}}_{H^2}+\norm{\frac{\varphi_2}{\rho_0}}_{H^2}\right)\, dt\\
&\leq C_1 \norm{\frac{\varphi_1-\varphi_2}{\rho_0}}_{L^2(H^2)}\left(\norm{\frac{\varphi_1}{\rho_0}}_{L^2(H^2)}+\norm{\frac{\varphi_2}{\rho_0}}_{L^2(H^2)}\right),
\end{align*}

\begin{align*}\bullet\int_{0}^{T}\frac{1}{\rho_{\F}} \norm{\varphi_{1,x}\varphi^3_{1}-\varphi_{2,x}\varphi^3_{2}}_{L^2}\, dt
&=\int_{0}^{T}\frac{1}{\rho_{\F}} \norm{\varphi^3_{1}(\varphi_{1,x}-\varphi_{2,x})+(\varphi^3_1-\varphi^3_2) \varphi_{2,x}}_{L^2}\, dt\\
	&
	\leq C_1 \Bigg(\int_{0}^{T}\frac{\rho_0^4}{\rho_{\F}} \norm{\frac{\varphi_1-\varphi_2}{\rho_0}}_{H^2}\left(\norm{\frac{\varphi_1}{\rho_0}}^2_{H^1}\norm{\frac{\varphi_1}{\rho_0}}_{L^2}\right) \, dt\\
	&+\int_{0}^{T}\frac{\rho_0^4}{\rho_{\F}} \norm{\frac{\varphi_1-\varphi_2}{\rho_0}}_{L^2}\norm{\frac{\varphi_2}{\rho_0}}_{H^2}\left(\norm{\frac{\varphi_1}{\rho_0}}^2_{H^1}+\norm{\frac{\varphi_2}{\rho_0}}^2_{H^1}\right)\, dt\Bigg)\\
	&\leq C_1 \Bigg(\norm{\frac{\varphi_1}{\rho_0}}_{C([0,T]; L^2)}\norm{\frac{\varphi_1-\varphi_2}{\rho_0}}_{L^2(H^2)}\norm{\frac{\varphi_1}{\rho_0}}^2_{L^4(H^1)}\\
	&+\norm{\frac{\varphi_1-\varphi_2}{\rho_0}}_{C([0, T]; L^2)}\norm{\frac{\varphi_2}{\rho_0}}_{L^2(H^2)}\left(\norm{\frac{\varphi_1}{\rho_0}}^2_{L^4(H^1)}+ \norm{\frac{\varphi_2}{\rho_0}}^2_{L^4(H^1)}\right)\Bigg)\\
	&\hspace{-2.5cm}\leq C_1 \Bigg(\norm{\frac{\varphi_1}{\rho_0}}^2_{C([0, T]; L^2)}\norm{\frac{\varphi_1-\varphi_2}{\rho_0}}_{L^2(H^2)}\norm{\frac{\varphi_1}{\rho_0}}_{L^2(H^2)}+\norm{\frac{\varphi_1-\varphi_2}{\rho_0}}_{C([0, T];L^2)}\norm{\frac{\varphi_2}{\rho_0}}_{L^2(H^2)}
	\\
	&\left(\norm{\frac{\varphi_1}{\rho_0}}_{C([0, T]; L^2)}\norm{\frac{\varphi_1}{\rho_0}}_{L^2(H^2)}+\norm{\frac{\varphi_2}{\rho_0}}_{C([0, T]; L^2)}\norm{\frac{\varphi_2}{\rho_0}}_{L^2(H^2)}\right)\Bigg),
	 \end{align*}
	 
\begin{align*}\bullet\int_{0}^{T}\frac{1}{\rho_{\F}} \norm{\varphi_{1,x}\varphi^2_{1}-\varphi_{2,x}\varphi^2_{2}}_{L^2}\, dt
&\leq C_1 \Bigg(\norm{\frac{\varphi_1}{\rho_0}}_{C([0, T]; L^2)}\norm{\frac{\varphi_1-\varphi_2}{\rho_0}}_{L^2(H^2)}\norm{\frac{\varphi_1}{\rho_0}}_{L^2(H^2)}\\
&+\norm{\frac{\varphi_1-\varphi_2}{\rho_0}}_{C([0, T]; L^2)}\norm{\frac{\varphi_2}{\rho_0}}_{L^2(H^2)} \left(\norm{\varphi_1}_{L^2(H^1)}+\norm{\varphi_2}_{L^2(H^1)}\right)\Bigg).
\end{align*}

\begin{align*}\bullet\int_{0}^{T}\frac{1}{\rho_{\F}} \norm{\varphi_1\varphi_{1,x}-\varphi_2\varphi_{2,x}}_{L^2}\, dt
\leq C_1 \norm{\frac{\varphi_1-\varphi_2}{\rho_0}}_{L^2(H^1)}\left(\norm{\frac{\varphi_1}{\rho_0}}_{L^2(H^1)}+\norm{\frac{\varphi_2}{\rho_0}}_{L^2(H^1)}\right). 
\end{align*}

Similarly, $\norm{\frac{N_2(u_1,\varphi_1)-N_2(u_2,\varphi_2)}{\rho_{\F}}}_{L^1(L^2)}$ can be estimated for any $(u_1, \varphi_1), (u_2, \varphi_2)\in \Y$. 
Again, using Poincar\'e inequality for $H^1_0(0, 1)$ along with \eqref{ineq1} for $H^2_1$, from above estimates we obtain \eqref{Lipnl}.
\end{proof}

Now we are ready to use a fixed point argument to prove \Cref{Thm-nonlinear}. To do it, for any given $(u,\varphi)\in \Y$, we consider a family of the linearized systems with a control $h$ and with forcing terms $N_1(u, \varphi)$ and $N_2(u, \varphi)$, defined in \eqref{nonlinear}
    \begin{equation}\label{lin-sourse-vary}
		\begin{cases}
			w_t-\gamma w_{xx}+\bar u(x) w_x+\bar u'(x) w=\gamma_1 \psi_x+N_1(u,\varphi), & \text{in } Q_T, \\
			\psi_t+ \psi_{xxxx}+\gamma_2 \psi_{xx} + \bar u(x) \psi_x = N_2(u,\varphi)+\chi_{\mathcal{O}} h,  & \text{in } Q_T,\\
			w(t, 0)=0,  \  w(t, 1)=0,  & t \in (0, T), \\
			\psi_x(t,0)=0, \  \psi_x(t,1)=0, & t \in (0, T),    \\
			\psi_{xxx}(t, 0)=0,  \  \psi_{xxx}(t, 1)=0, & t \in (0, T),\\
			w(0, x)=w_0(x),\, \psi(0,x)=\psi_0(x), & x \in  (0, 1).
		\end{cases}
	\end{equation} 
Recall that due to \Cref{Proposition-weighted} and \Cref{non est}, for any given $(u,\varphi)\in \Y$, there exists a control $h^{(u, \varphi)}\in \V$ such that \eqref{lin-sourse-vary} admits a unique solution $(w^{(u, \varphi)}, \psi^{(u, \varphi)})\in \Y$ satisfying the null controllability condition \eqref{eqnullcond} and the estimate \eqref{estimate-weighted} where $f_1=N_1(u, \varphi)$ and $f_2=N_2(u, \varphi)$. 

In view of that, for $R>0$, we set 
$$ \Y_R=\{(u,\varphi)\in \Y : \|(u,\varphi)\|_{\Y}\leq R\},$$
and for any given $(u,\varphi)\in \Y_R$ we define a map $\Lambda$ by 
\begin{equation}\label{eqfixed}
\Lambda(u,\varphi) = (w^{(u, \varphi)}, \psi^{(u, \varphi)}),
\end{equation}
where $(w^{(u, \varphi)}, \psi^{(u, \varphi)})$ is the solution of \eqref{lin-sourse-vary} with the control $h^{(u, \varphi)}$.

Our aim is to show that for some $R>0$, for any $(u,\varphi)\in \Y_R$, $\Lambda(u,\varphi)\in Y_R$ and the map $\Lambda: \Y_R\rightarrow \Y_R$ is a contraction. Then the Banach fixed point theorem gives \Cref{Thm-nonlinear}. 

\bigskip 
\noindent 
\textbf{Proof of \Cref{Thm-nonlinear}.} 
Let us consider $(w_0, \psi_0)\in (L^2(0, 1))^2$ satisfying 
\begin{equation}\label{eqiniest}
 \|(w_0, \psi_0)\|_{L^2\times L^2}\le \mu,
 \end{equation}
where $\mu=\iota R$, for some positive constants $\iota$ and $R$ which will be determined later such that  the map $\Lambda$, defined in \eqref{eqfixed}, becomes a contraction map from 
$\Y_R$ to $\Y_R$ and the fixed point argument goes through. 

Recall that due to \Cref{non est}, for any $(u,\varphi)\in \Y$, $N_i(u, \varphi)\in \F$ for $i=1, 2$ satisfying \eqref{non est1}. 
From \Cref{Proposition-weighted}, it follows that for any $(u,\varphi)\in \Y$, there exists a control $h^{(u, \varphi)}\in \V$ such that \eqref{lin-sourse-vary} admits a unique solution 
$(w^{(u, \varphi)}, \psi^{(u, \varphi)})\in \Y$ satisfying 
$$ \|(w^{(u, \varphi)}, \psi^{(u, \varphi))}\|_{\Y}+\|h^{(u, \varphi)}\|_{\V}\le C{e^{C(T+\frac{1}{T^m})}}  
\left(\|(w_0,\psi_0)\|_{L^2(0,1)\times L^2(0,1)} + \|N_1(u, \varphi)\|_{\F}+\|N_2(u, \varphi)\|_{\F}\right).$$
For $(u,\varphi)\in \Y_R$, from the above inequality along with \eqref{non est1}, we get 
$$
\begin{array}{l}
\|(w^{(u, \varphi)}, \psi^{(u, \varphi)})\|_{\Y}+\|h^{(u, \varphi)}\|_{\V}\le {Ce^{C(T+\frac{1}{T^m})}}\left(\mu + 2C_1(R^4+ R^3+R^2)\right)\\
\hspace{3cm} \le C{e^{C(T+\frac{1}{T^m})}}\left(\iota+ 2C_1(R^3+R^2+R)\right)R\le R,
\end{array}
$$
by choosing 
\begin{equation}\label{eqfixedconstant}
\iota=\frac{1}{4C{e^{C(T+\frac{1}{T^m})}}}, \quad R^3+R^2+R= \frac{1}{8 C_1C{e^{C(T+\frac{1}{T^m})}}}, \quad \mu=\iota R.
\end{equation}
Thus, we have shown that for $\mu, R$ introduced in \eqref{eqfixedconstant}, $\Lambda: \Y_R\rightarrow \Y_R$ is defined. 

Now to show that the map is a contraction, we consider $(u_i, \varphi_i)\in \Y_R$, and $(w^{(u_i, \varphi_i)}, \psi^{(u_i, \varphi_i)})\in \Y_R$ is the solution of \eqref{lin-sourse-vary} with control $h^{(u_i, \varphi_i)}\in \V$ as obtained in \Cref{Proposition-weighted}, for each $i=1, 2$. Denoting $\widehat{w}=w^{(u_1, \varphi_1)}-w^{(u_2, \varphi_2)}$, 
$\widehat{\psi}=\psi^{(u_1, \varphi_1)}-\psi^{(u_2, \varphi_2)}$ and $\widehat{h}=h^{(u_1, \varphi_1)}-h^{(u_2, \varphi_2)}$, note that , 
$\widehat{h}\in \V$ and $(\widehat{w}, \widehat{\psi})$ satisfies 
\begin{equation}\label{lin-sourse-Lip}
		\begin{cases}
			\widehat{w}_t-\gamma \widehat{w}_{xx}+\bar u(x) \widehat{w}_x+\bar u'(x) \widehat{w}=\gamma_1 \widehat{\psi}_x+N_1(u_1,\varphi_1)-N_1(u_2, \varphi_2), & \text{in } Q_T, \\
			\widehat{\psi}_t+ \widehat{\psi}_{xxxx}+\gamma_2 \widehat{\psi}_{xx} + \bar u(x) \widehat{\psi}_x = N_2(u_1,\varphi_1)-N_2(u_2,\varphi_2)+\chi_{\mathcal{O}} \widehat{h},  & \text{in } Q_T,\\
			\widehat{w}(t, 0)=0,  \  \widehat{w}(t, 1)=0,  & t \in (0, T), \\
			\widehat{\psi}_x(t,0)=0, \  \widehat{\psi}_x(t,1)=0, & t \in (0, T),    \\
			\widehat{\psi}_{xxx}(t, 0)=0,  \  \widehat{\psi}_{xxx}(t, 1)=0, & t \in (0, T),\\
			\widehat{w}(0, x)=0,\, \widehat{\psi}(0,x)=0, & x \in  (0, 1).
		\end{cases}
	\end{equation} 
Thus, using \eqref{estimate-weighted} and \eqref{Lipnl}, we get 
$$ 
\begin{array}{l}
\|\Lambda(u_1, \varphi_1)-\Lambda(u_2, \varphi_2)\|_\Y +\|h^{(u_1, \varphi_1)}-h^{(u_2, \varphi_2)}\|_{\V}\\
\le {Ce^{C(T+\frac{1}{T^m})}} \Big(\|N_1(u_1, \varphi_1)- N_1(u_2, \varphi_2)\|_{\F}+ \|N_2(u_1, \varphi_1)- N_2(u_2, \varphi_2)\|_{\F}\Big)\\
\le 4C_1C{e^{C(T+\frac{1}{T^m})}}\left(R^3+R^2+R\right)\|(u_1-\varphi_1, u_2-\varphi_2)\|_{\Y} \le \frac{1}{2}\|(u_1-\varphi_1, u_2-\varphi_2)\|_{\Y},
\end{array}
$$
because of the choice of $R$ as in \eqref{eqfixedconstant}, and hence $\Lambda$ is a contraction map from $\Y_R$ into itself, for the choice of $R$ as in \eqref{eqfixedconstant}. 
Now, using the Banach fixed point theorem, we obtain $(w, \psi)\in \Y_R$, the fixed point of $\Lambda$, and therefore, $(w, \psi)\in \Y_R$ satisfies \eqref{non lin} with a control $h\in \V$ and with initial data $(w_0, \psi_0)\in (L^2(0, 1))^2$ satisfying \eqref{eqiniest}, for $\mu$ as determined in \eqref{eqfixedconstant}. 

Since $(w, \psi)\in \Y_R$ and $\displaystyle \lim_{t\to T^{-}} \rho_0(t)=0$, it yields
	$$ \left(w(T,\cdot), \psi(T,\cdot)  \right) =(0,0) \quad \mathrm{in}\quad L^2(0, 1).$$
This proves the local null-controllability of the nonlinear system \eqref{non lin}.

\section{Further remarks and open problems}\label{Sec: remarks}
In view of our results, a few questions arise naturally and we mention them here for future investigations. 

\emph{Non-constant steady state:} In this article, we study the controllability of \eqref{CHB} around a certain steady state $(\bar{u}, \bar{\phi})$, where $\bar{\phi}$ is a constant and $(\bar{u}, \bar{\phi})$ satisfies the conditions mentioned in \Cref{rem-assump-steadystate}. It would be more interesting to consider the steady state satisfying \eqref{steady CHB} and both $ \bar{u}$ and $\bar{\phi}$ are non-constant functions of space variable. Then the system around $(\bar{u}, \bar{\phi})$ is in the form: 
\begin{equation}\label{gen non lin}
		\begin{cases}
			w_t-\gamma w_{xx}+\bar u w_x+\bar u' w=\gamma_1 \psi_x+ (12\bar{\phi}^2\bar{\phi}'-\bar{\phi}'-\bar{\phi}'')\psi- \bar{\phi}'\psi_{xx}
			+\widetilde{N}_1(w,\psi) +\chi_{\mathcal{O}_1} h_1, & \text{in } Q_T, \\
			\psi_t+ \psi_{xxxx}+\gamma_2 \psi_{xx} + (\bar{u}-24\bar{\phi}\bar{\phi}')\psi_x-(24\bar{\phi}'^2+2\bar{\phi}\bar{\phi}'')\psi
			=-\phi'w+ \widetilde{N}_2(w,\psi)+\chi_{\mathcal{O}_2} h_2,  & \text{in } Q_T,\\
			w(t, 0)=0,  \  w(t, 1)=0,  & t \in (0, T), \\
			\psi_x(t,0)=0, \  \psi_x(t,1)=0, \quad \psi_{xxx}(t, 0)=0,  \  \psi_{xxx}(t, 1)=0 & t \in (0, T),    \\
			w(0, x)=w_0(x),\, \psi(0,x)=\psi_0(x), & x \in  (0, 1),
		\end{cases}
	\end{equation} 
where $\widetilde{N}_1$ and $\widetilde{N}_2$ are nonlinear terms. Here $h_1$ and $h_2$ are controls acting in any open subsets $\mathcal{O}_1$ and $\mathcal{O}_2$ of $(0, 1)$ respectively. Comparing \eqref{non lin}, it can be observed that in $\eqref{gen non lin}_1$, a term involving $\psi_{xx}$ is present whereas a term involving $w$ is present in 
$\eqref{gen non lin}_2$. Thus, the coupling between the equations are stronger than that of \eqref{non lin} and this makes the problem more challenging. The available Carleman inequality may not be easily adaptable to this case because of this strong coupling. Moreover, if the controllability result can be obtained, it will be interesting to investigate if the controllability can be obtained using a minimal number of controls. 

It is interesting to study whether one can establish a local controllability result when the control acts in the second-order parabolic equation. It is worth mentioning that the work \cite{CE16} focused on this control issue for the stabilized KS equation by controlling through the heat equation. The main ingredient of their work is to explore a new Carleman inequality associated to the KS (fourth order) equation with nonhomogeneous boundary data and source terms. The first step for this problem related to the Cahn-Hilliard-Burgers' equation will be to investigate whether this type of Carleman inequality holds for the Cahn-Hilliard equation.

\smallskip

\emph{Multi-dimensional case:} The immediate interest would be to study the controllability of the Cahn-Hilliard-Navier-Stokes equation \eqref{gen} in higher dimension. But it seems that a major obstacle to use our method to obtain the controllability of CHNS is the Carleman estimate for $\eqref{gen}_1-\eqref{gen}_2$ with the boundary conditions $\eqref{gen}_6$. The Carleman inequality for this case is very much related to that of the Cahn-Hilliard equation as mentioned in \cite[Section 5.2]{Guzman}. As indicated in \cite[Section 5.2]{Guzman}, the required weight function to obtain the Carleman inequality for the Cahn-Hilliard equation with this certain boundary condition is not available and hence obtaining the inequality is still an open problem. Thus, to study the controllability of the CHNS model in higher dimension will need a different approach or an adaptation of the Carleman inequality in this set-up. 

\smallskip

 \emph{Considering different potential $F$:}   In this article we have considered the double well potential  of regular type; indeed a particular example of a regular potential given in \eqref{reg potential}. For diffuse interface systems modeled by these equations, a potential of singular type is more physically and thermodynamically relevant.  We note that CHNS system for well-posedness as well as optimal control are well studied in the literature for the following singular potential called as  logarithmic potential
 \begin{align}
      F_{log}(s) =\frac{\theta}{2} \big((1+s)\log(1+s)+(1-s)\log(1-s)\big) + \frac{\theta_c}{2}(1-s^2) \quad s\in (-1,1), \quad 0 < \theta < \theta_c,
\end{align}
where $\theta$ and $\theta_c$ are proportional to absolute and critical temperature, respectively. Obvious extension of the current work is to consider logarithmic potential. Note that $F_{log}$ has singularities at $-1, +1$.  Hence deriving corresponding linearized system similar to \eqref{gen non lin}
system itself will be challenging. One will have to follow a  general treatment for such problems  via approximating the singular potential by sequence of regular potentials. However for controllability results the convergence  arguments may need careful treatment.


\smallskip 
\emph{Boundary control problems: } Another interesting aspect can be addressing the boundary null controllability problem for the Cahn-Hilliard-Burgers' equation. In \cite{CE12}, a local null controllability result for a system coupling KS-KdV and heat equations has been established using three boundary controls acting at the left end of the spatial domain $(0,1)$. This naturally raises the question of whether a similar controllability result could be achieved for the system considered in this paper. A challenging problem is whether the controllability result can be improved using less number of controls on the boundary. It is well-known that the usual Carleman estimate is not effective to explore the boundary controllability problem for coupled system with less number of control than variables as we do not have the interaction of the boundary terms for the associated adjoint system to absorb the unusual observation terms from the joint Carleman estimate. Thus, this question offers promising avenues for further research.


\section{Acknowledgment}
{The authors would like to express their sincere thanks to the reviewers for their helpful comments, valuable questions, and suggestions, which significantly improved the first version of the paper. Subrata Majumdar gratefully acknowledges Víctor Hernández-Santamaría for pointing out the reference \cite{TTW23} and for the fruitful discussions.}

\appendix
\section{Some auxiliary result}\label{rev_app}
{In this section we prove the following result:
\begin{proposition}
	Let $\gamma>0,$ $T>0$. Assume $f\in L^2(0,T;H^1_0(0,1)),$ and $\bar u\in H^2(0,1)\cap H^1_0(0,1).$ Then the equation
	\begin{equation}\label{rev_heat}
		\begin{cases}
			z_t-\gamma z_{xx}-\bar u z_x  =  f &\text{ in }   (0,T) \times (0,1),\\
			z(t,0)=z(t,1)=0 & \text{ in }   (0,T),\\
			z(0,x)=0 & \text{ in }   (0,1).
		\end{cases}
	\end{equation}
has a unique solution $z\in L^2(0,T;H^3(0,1))$ and it satisfies the following estimate
\begin{align}\label{rev_h3}
	\norm{z}_{L^2(0,T;H^3(0,1))}\leq Ce^{CT}\norm{f}_{L^2(0,T;H^1_0(0,1))},
\end{align}
where $C>0$ independent of $T.$
\end{proposition}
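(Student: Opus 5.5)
The plan is a hierarchy of energy estimates, each one derivative higher, with the lower-order estimate feeding into the next. Existence and uniqueness of a weak solution $z\in C([0,T];L^2(0,1))\cap L^2(0,T;H^1_0(0,1))$ is standard (\cite[Theorem 11.3]{RR}, or a Galerkin scheme on the Dirichlet eigenfunctions $\sin(k\pi x)$). So the real content is the a priori bound \eqref{rev_h3}, with a constant of the form $Ce^{CT}$ where $C$ does not depend on $T$.

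I would derive the estimates for smooth data (Galerkin approximations), and pass to the limit at the end.

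\textbf{Level 0.} Multiply \eqref{rev_heat} by $z$ and integrate. Writing $\int \bar u z_x z=-\frac12\int \bar u' z^2$ gives
\[
\frac{d}{dt}\|z\|^2_{L^2}+2\gamma\|z_x\|^2_{L^2}\le C\|z\|^2_{L^2}+\|f\|^2_{L^2}.
\]

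\textbf{Level 1.} Multiply by $-z_{xx}$. The term $z_t$ gives $\frac12\frac{d}{dt}\|z_x\|^2$ with no boundary term, since $z_t=0$ at $x=0,1$. The transport term is bounded by $\|\bar u\|_\infty\|z_x\|\|z_{xx}\|$, and Young's inequality yields
\[
\frac{d}{dt}\|z_x\|^2+\gamma\|z_{xx}\|^2\le C\|z_x\|^2+C\|f\|^2_{L^2}.
\]

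\textbf{Level 2.} This step needs a compatibility observation. Since $z$ and $f$ vanish at the boundary, the equation forces $\gamma z_{xx}=-\bar u z_x$ on $x=0,1$; note this is not zero, because $\bar u(0)=\bar u(1)=0$ only kills the term there. Indeed $\gamma z_{xx}=z_t-\bar u z_x-f$, and every term on the right vanishes on the boundary. So $z_{xx}\in H^1_0$.

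The cleanest route is to differentiate the equation in $x$. Set $y=z_x$. Then
\[
y_t-\gamma y_{xx}-\bar u y_x-\bar u' y=f_x .
\]
Multiply this by $-y_{xx}=-z_{xxx}$. The boundary term $[y_t y_x]_0^1=[z_{xt}z_{xx}]_0^1$ is the delicate point, because $z_{xx}$ vanishes at the boundary only when the transport term does. Instead, I would multiply the original equation by $z_{xxxx}$ and integrate by parts twice. Alternatively, multiply the differentiated equation by $-(z_{xx})_x$ after noting that $w:=z_{xx}$ satisfies Dirichlet conditions.

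Concretely, set $w=\gamma z_{xx}+\bar u z_x+f=z_t$. Then $w$ vanishes on the boundary, and $w$ solves
\[
w_t-\gamma w_{xx}-\bar u w_x=f_t .
\]
This involves $f_t$, which is not allowed. So I fall back on the direct approach: test the $x$-differentiated equation for $y$ against $-y_{xx}$. The boundary contribution is
\[
-[y_t y_x]_0^1=-\tfrac{d}{dt}\tfrac12\,[\cdot]\ \text{terms involving } z_{xt}z_{xx}.
\]
Using $z_{xx}=-\gamma^{-1}\bar u z_x=0$ at $x=0,1$ (because $\bar u(0)=\bar u(1)=0$), these boundary terms vanish. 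So $y_x=z_{xx}=0$ on the boundary after all, and the identity closes:
\[
\frac{d}{dt}\|z_{xx}\|^2+\gamma\|z_{xxx}\|^2\le C\big(\|z_{xx}\|^2+\|z_x\|^2\big)+C\|f_x\|^2_{L^2}.
\]
Here I use $\bar u\in C^1$, so the terms $\bar u y_x y_{xx}$ and $\bar u' y y_{xx}$ can be absorbed. The term $f_x$ is integrated against $y_{xx}$ without any integration by parts, so no boundary term arises and $f\in L^2(H^1_0)$ suffices.

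\textbf{Closing.} Sum the three levels and apply Gronwall with zero initial data. This gives
\[
\sup_t\|z\|^2_{H^2}+\int_0^T\|z\|^2_{H^3}\le Ce^{CT}\|f\|^2_{L^2(0,T;H^1_0)},
\]
with $C$ depending only on $\gamma$ and $\|\bar u\|_{C^1}$. By lower semicontinuity this passes to the limit, and uniqueness follows from the Level 0 estimate.

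The main obstacle is justifying the boundary behaviour at Level 2, namely that $z_{xx}$ vanishes at the endpoints thanks to $\bar u(0)=\bar u(1)=0$ and the Dirichlet data. The Galerkin basis $\sin(k\pi x)$ automatically has vanishing second derivatives at the endpoints. However, the transport term $\bar u z_x$ does not preserve this basis structure, so in the approximation I would perform the computation on the weak form, or regularize $f$ and appeal to higher regularity of the analytic semigroup.
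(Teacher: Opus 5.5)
Your argument is correct, but it takes a different route from the paper's.

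\textbf{How the two routes differ.} The paper passes to $\tilde z=e^{-\lambda t}z$. It asserts, without proof, that $\mathcal A_1=\gamma\partial_{xx}+\bar u\partial_x-\lambda$ with domain $\{z\in H^3(0,1):\ z(0)=z(1)=0,\ z''(0)=z''(1)=0\}$ generates a stable analytic semigroup on $H^1_0(0,1)$. It then quotes $L^2$ maximal regularity on $(0,\infty)$, and the factor $e^{CT}$ comes from undoing the shift. You instead prove the $H^3$ bound directly by three nested energy estimates closed with Gronwall.

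The structural input is the same in both. Since $\bar u$ and $f(t,\cdot)$ vanish at $x=0,1$, the equation forces $z_{xx}=0$ there. In the paper this is the condition $z''(0)=z''(1)=0$ in $\mathcal D(\mathcal A_1)$, which is exactly what makes $\mathcal A_1 z\in H^1_0$. In your proof it kills the boundary term $[z_{xt}z_{xx}]_0^1$.

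The paper's version is shorter, but it rests on an unproved generation claim and a cited theorem. Yours is self-contained, makes the dependence $C=C(\gamma,\|\bar u\|_{W^{1,\infty}})$ explicit, and also gives a bound in $L^\infty(0,T;H^2(0,1))$.

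\textbf{Two small points.} First, your Level 2 paragraph should be cleaned up. The sentence saying $-\bar u z_x$ is not zero at the boundary is false: it vanishes, and that is precisely what you use at the end. The detours through $z_{xxxx}$ and $w=z_t$ can simply be deleted.

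Second, the obstacle raised in your last paragraph does not exist. For the sine-Galerkin approximation $z_n$, the function $\partial_x^4 z_n$ lies in the Galerkin space, so you may test the projected equation with it directly. The boundary terms $[z_{n,t}z_{n,xxx}]_0^1$, $[z_{n,xt}z_{n,xx}]_0^1$ and $[z_{n,xx}z_{n,xxx}]_0^1$ vanish for sine series. The terms $[\bar u z_{n,x}z_{n,xxx}]_0^1$ and $[f z_{n,xxx}]_0^1$ vanish because $\bar u(0)=\bar u(1)=0$ and $f\in L^2(0,T;H^1_0(0,1))$. Hence your Level 2 inequality holds uniformly in $n$ and passes to the weak limit.
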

\begin{proof}
For large enough $\lambda>0$ and $\bar{u}\in H^2(0, 1)\cap H^1_0(0, 1)$, let us consider the equation
\begin{equation}\label{rev_heat1}
		\begin{cases}
			\widetilde{z}_t-\gamma \widetilde{z}_{xx}-\bar u \widetilde{z}_x +\lambda \widetilde{z} = \widetilde{f} &\text{ in }   (0,T) \times (0,1),\\
			\widetilde{z}(t,0)=\widetilde{z}(t,1)=0 & \text{ in }   (0,T),\\
			\widetilde{z}(0,x)=0 & \text{ in }   (0,1),
		\end{cases}
	\end{equation}
where $$\widetilde{f}(t, x)= \left\{\begin{array}{ll} e^{-\lambda t} f(t, x) & \forall\, (t, x) \in (0, T)\times (0, 1) \\
 0 & \forall\, (t, x)\in (T, \infty)\times (0, 1),\end{array}\right.$$ and $f$ is any given function in $L^2(0, T; H^1_0(0, 1))$. 

The linear operator associated  to \eqref{rev_heat1} in $H^1_0(0, 1)$ is given by 
$$ 
\begin{array}{l}
\mathcal{D}(\mathcal{A}_1)=\{\widetilde{z}\in H^3(0, 1)\mid \widetilde{z}(0)=0=\widetilde{z}(1),  \quad \widetilde{z}''(0)=0=\widetilde{z}''(1)\}, \\
\mathcal{A}_1 \widetilde{z}= \gamma \widetilde{z}''+\bar{u} \widetilde{z}'-\lambda \widetilde{z}, \quad \widetilde{z}\in \mathcal{D}(\mathcal{A}_1). 
\end{array}
$$ 
It can be shown that $(\mathcal{A}_1, \mathcal{D}(\mathcal{A}_1))$ forms a stable analytic semigroup on $H^1_0(0, 1)$ and hence from \cite[Proposition 3.7]{MR2273323}, it follows that \eqref{rev_heat1} admits a unique solution $\widetilde{z}\in L^2(0, \infty; H^3(0, 1))\cap H^1(0, \infty; H^1_0(0, 1))$ satisfying 
\begin{equation}\label{eqrevh2} \|\widetilde{z}\|_{L^2(0, \infty; H^3(0, 1))}\le C \|\widetilde{f}\|_{L^2(0, \infty; H^1_0(0,1))},
\end{equation}
for some positive constant $C$. 

Now introducing $z(t, x)=e^{\lambda t}\widetilde{z}(t, x)$, we obtain that $z\in L^2(0, T; H^3(0, 1))\cap H^1(0, T; H^1_0(0, 1))$ is the unique solution of \eqref{rev_heat} satisfying 
$$ \|z\|_{L^2(0, T; H^3(0, 1))}\le C e^{\lambda T} \|f\|_{L^2(0, T; H^1_0(0,1))},$$
where $C$ is same as the constant obtained in \eqref{eqrevh2} and is independent of $T$. Hence the result follows. 
\end{proof}}

\bibliographystyle{plain} 
\bibliography{ref1}
\end{document}